\newcommand{\changefont}{\fontsize{7}{9}\selectfont}
\newcolumntype{L}[1]{>{\raggedright\let\newline\\\arraybackslash\hspace{0pt}}m{#1}}
\newcolumntype{C}[1]{>{\centering\let\newline\\\arraybackslash\hspace{0pt}}m{#1}}
\newcolumntype{R}[1]{>{\raggedleft\let\newline\\\arraybackslash\hspace{0pt}}m{#1}}
\newcommand{\cut}[1]{{}}
\newcommand{\EE}{{\mathbb{E}}} 				% Expectation
\newcommand{\PP}{\mathbb{P}} 				% Probability measure
\newcommand{\RR}{\mathbb{R}}				% Real numbers
\newcommand{\HH}{\mathbb{H}}				% Hilbert space: H
\newcommand{\NN}{\mathbb{N}}				% Natural numbers
\newcommand{\CC}{\mathbb{C}}				% Complex numbers
\newcommand{\DeclareAutoPairedDelimiter}[3]{%
	\expandafter\DeclarePairedDelimiter\csname Auto\string#1\endcsname{#2}{#3}%
	\begingroup\edef\x{\endgroup
		\noexpand\DeclareRobustCommand{\noexpand#1}{%
			\expandafter\noexpand\csname Auto\string#1\endcsname*}}%
	\x}
\DeclareAutoPairedDelimiter{\p}{(}{)} 					%parentheses
\DeclareAutoPairedDelimiter{\sp}{[}{]} 					%square parentheses
\DeclareAutoPairedDelimiter{\abs}{|}{|} 					%square parentheses
\DeclareAutoPairedDelimiter{\cp}{\{}{\}} 				%curly parentheses
\DeclareAutoPairedDelimiter{\dotp}{\langle}{\rangle} 	%square parentheses
\DeclareAutoPairedDelimiter{\n}{\Vert}{\Vert} 			%Norm double bars
\DeclareAutoPairedDelimiter{\cl}{\lceil}{\rceil}
\newcommand{\cD}{{\mathcal{D}}}
\newcommand{\cF}{{\mathcal{F}}}
\newcommand{\cG}{{\mathcal{G}}}
\newcommand{\cH}{{\mathcal{H}}}
\newcommand{\cO}{{\mathcal{O}}}
\newcommand{\cX}{{\mathcal{X}}}
\newcommand{\cY}{{\mathcal{Y}}}
\newcommand{\bc}{\begin{center}}
\newcommand{\ec}{\end{center}}
\newcommand{\bdm}{\begin{displaymath}}
\newcommand{\edm}{\end{displaymath}}
\newcommand{\beq}{\begin{equation}}
\newcommand{\eeq}{\end{equation}}
\newcommand{\bfl}{\begin{flushleft}}
\newcommand{\efl}{\end{flushleft}}
\newcommand{\bt}{\begin{tabbing}}
\newcommand{\et}{\end{tabbing}}
\newcommand{\beqn}{\begin{align}}
\newcommand{\eeqn}{\end{align}}
\newcommand{\beqs}{\begin{align*}} % no equation numbers
\newcommand{\eeqs}{\end{align*}}  % no equation numbers
\newtheoremstyle{Fancyplain}
{\topsep}   
{\topsep}   
{\itshape}  
{0pt}       
{\bfseries} 
{}         
{5pt plus 1pt minus 1pt} 
{\thmname{#1} \thmnumber{#2}. \thmnote{\normalfont\bfseries#3.}}
\theoremstyle{Fancyplain}
\newtheorem{thm}{Theorem}
\newtheorem{lem}{Lemma}
\newtheorem{cor}[lem]{Corollary}
\crefname{thm}{Thm.}{Thms.}
\Crefname{thm}{Theorem}{Theorems}
\crefname{lem}{Lem.}{Lems.}
\Crefname{lem}{Lemma}{Lemmas}
\crefname{cor}{Cor.}{Cors.}
\Crefname{cor}{Corollary}{Corollaries}
\crefname{prop}{Prop.}{Props.}
\Crefname{prop}{Proposition}{Propositions}
\newtheoremstyle{Fancydefinition}
{\topsep}   
{\topsep}   
{\normalfont}  
{0pt}       
{\bfseries} 
{}         
{5pt plus 1pt minus 1pt} 
{\thmname{#1} \thmnumber{#2}. \thmnote{\normalfont\bfseries#3.}}
\theoremstyle{Fancydefinition}
\newtheorem{defn}[lem]{Definition}
\newtheorem{rem}{Remark}
\crefname{defn}{Defn.}{Defns.}
\Crefname{defn}{Definition}{Definitions}
\crefname{example}{Ex.}{Exs.}
\Crefname{example}{Example}{Examples}
\crefname{xca}{Ex.}{Exs.}
\Crefname{xca}{Exercise}{Exercises}
\crefname{rem}{Rem.}{Rems.}
\Crefname{rem}{Remark}{Remarks}
\crefname{asmp}{Asmp.}{Asmps.}
\Crefname{asmp}{Assumption}{Assumptions}
\crefname{section}{Sec.}{Secs.}
\Crefname{section}{Section}{Sections}
\numberwithin{equation}{section}
\numberwithin{figure}{section}
\DeclareCiteCommand{\parencite}[\mkbibparens]
  {\usebibmacro{prenote}}
  {\usebibmacro{citeindex}%
    \printtext[bibhyperref]{\usebibmacro{cite}}}
  {\multicitedelim}
  {\usebibmacro{postnote}}
\newcommand{\eps}{\epsilon}
\newcommand{\algoNameS}{{\tt A2BCD}}
\begin{document}
%%%%%%%%%%%%%%%%%%%%%%%%%%%%%%%%%%%%%%%%
\title{\texttt{A2BCD}: An Asynchronous Accelerated Block Coordinate Descent Algorithm With Optimal Complexity\thanks{This work is supported in part by NSF Grant DMS-1720237 and ONR Grant N000141712162.}}

\author{Robert Hannah\thanks{\href{mailto:roberthannah89@math.ucla.edu}{roberthannah89@math.ucla.edu}}}
\author{Fei Feng\thanks{\href{mailto:fei.feng@math.ucla.edu}{fei.feng@math.ucla.edu}}}
\author{Wotao Yin\thanks{\href{mailto:wotaoyin@math.ucla.edu}{wotaoyin@math.ucla.edu}}}
\affil{Department of Mathematics, University of California, Los Angeles, USA}
\renewcommand\Affilfont{\itshape\small}

\date{\today}

%%%%%%%%%%%%%%%%%%%%%%%%%%%%%%%%%%%%%%%%
\maketitle
%%%%%%%%%%%%%%%%%%%%%%%%%%%%%%%%%%%%%%%%

\begin{abstract}
In this paper, we propose the \textbf{A}synchronous \textbf{A}ccelerated Nonuniform Randomized \textbf{B}lock \textbf{C}oordinate \textbf{D}escent algorithm ($\algoNameS$), the first asynchronous Nesterov-accelerated algorithm that achieves optimal complexity. This parallel algorithm solves the unconstrained convex minimization problem, using $p$ computing nodes which compute updates to shared solution vectors, in an asynchronous fashion with no central coordination. Nodes in asynchronous algorithms do not wait for updates from other nodes before starting a new iteration, but simply compute updates using the most recent solution information available.  This allows them to complete iterations much faster than traditional ones, especially at scale, by eliminating the costly synchronization penalty of traditional algorithms. 

We first prove that $\algoNameS$ converges linearly to a solution with a fast accelerated rate that matches the recently proposed $\texttt{NU\_ACDM}$, so long as the maximum delay is not too large. Somewhat surprisingly, $\algoNameS$ pays no complexity penalty for using outdated information. We then prove lower complexity bounds for randomized coordinate descent methods, which show that $\algoNameS$ (and hence $\texttt{NU\_ACDM}$) has optimal complexity to within a constant factor. We confirm with numerical experiments that $\algoNameS$ outperforms $\texttt{NU\_ACDM}$, which is the current fastest coordinate descent algorithm, even at small scale. We also derive and analyze a second-order ordinary differential equation, which is the continuous-time limit of our algorithm, and prove it converges linearly to a solution with a similar accelerated rate.
\end{abstract}

\global\long\def\n#1{\left\Vert #1\right\Vert }

\global\long\def\abs#1{\left|#1\right|}

\global\long\def\p#1{\left(#1\right)}

\global\long\def\sp#1{\left[#1\right]}

\global\long\def\cp#1{\text{\ensuremath{\left\{  #1\right\} } }}

\global\long\def\floor#1{\text{\ensuremath{\left\lfloor #1\right\rfloor }}}

\global\long\def\ceil#1{\text{\ensuremath{\left\lceil #1\right\rceil }}}

\global\long\def\dotp#1{\left\langle #1\right\rangle }

\global\long\def\spn#1{\text{span}\left\{  #1\right\}  }

\global\long\def\RR{\mathbb{R}}

\global\long\def\NN{\mathbb{N}}

\global\long\def\EE{\mathbb{E}}

\global\long\def\HH{\mathbb{H}}

\global\long\def\PP{\mathbb{P}}

\global\long\def\VV{\mathbb{V}}

\global\long\def\CC{\mathbb{C}}

\global\long\def\cD{\text{\ensuremath{\mathcal{D}}}}

\global\long\def\cO{\mathcal{O}}

\global\long\def\cF{\mathcal{F}}

\global\long\def\cG{\mathcal{G}}

\global\long\def\cH{\mathcal{H}}

\global\long\def\cX{\text{\ensuremath{\mathcal{X}}}}

\global\long\def\cY{\text{\ensuremath{\mathcal{Y}}}}

\global\long\def\eps{\epsilon}

\global\long\def\del{\delta}

\global\long\def\gam{\text{\ensuremath{\gamma}}}

\global\long\def\seqi#1{\left(#1^{0},#1^{1},#1^{2},\ldots\right)}

\global\long\def\seqf#1#2{\left(#1^{0},#1^{1},#1^{2},\ldots,#1^{#2}\right)}

\global\long\def\sumi#1{\sum_{#1=1}^{\infty}}

\global\long\def\sumf#1#2{\sum_{#1=1}^{#2}}

\global\long\def\sumav#1#2{\frac{1}{#2}\sum_{#1=1}^{#2}}

\global\long\def\prodf#1#2{\prod_{#1=1}^{#2}}

\global\long\def\prodi#1{\prod_{#1=1}^{\infty}}

%%%%%%%%%%%%%%%%%%%%%%%%%%%%%%%%%%%%%%%%
\section{Introduction\label{sec:Introduction}}
%%%%%%%%%%%%%%%%%%%%%%%%%%%%%%%%%%%%%%%%
In this paper, we propose and prove the convergence of the \textbf{A}synchronous \textbf{A}ccelerated Nonuniform Randomized \textbf{B}lock \textbf{C}oordinate \textbf{D}escent algorithm ($\algoNameS$), the first asynchronous Nesterov-accelerated algorithm that achieves optimal complexity. No previous attempts have been able to prove a speedup for asynchronous Nesterov acceleration. We aim to find the minimizer $x_{*}$ of the unconstrained minimization problem:
\begin{align}
\min_{x\in\RR^{d}}f\p x & =f\p{x_{\p 1},\ldots,x_{\p n}}\label{eq:def:Problem}
\end{align}
where $f$ is $\sigma$-strongly convex for $\sigma>0$ with $L$-Lipschitz gradient $\nabla f=\p{\nabla_{1}f,\ldots,\nabla_{n}f}$. $x\in\RR^{d}$ is composed of coordinate blocks $x_{\p 1},\ldots,x_{\p n}$. The coordinate blocks of the gradient $\nabla_{i}f$ are assumed $L_{i}$-Lipschitz with respect to the $i$th block. That is, $\forall x,h\in\RR^{d}$:
%\footnote{Such $L_{i}\leq L$ will always exist in general when the gradient is $L$-Lipschitz. However some $L_{i}$ may be significantly smaller than $L$. }
%
\begin{align}
\n{\nabla_{i}f\p{x+P_{i}h}-\nabla_{i}f\p x} & \leq L_{i}\n h\label{eq:def:Coordinate-smoothness}
\end{align}
where $P_{i}$ is the projection onto the $i$th block of $\RR^{d}$. Let $\bar{L}\triangleq\frac{1}{n}\sum_{i=1}^{n}L_{i}$ be the average block Lipschitz constant. These conditions on $f$ are assumed throughout this whole paper. Our algorithm can be applied to non-strongly convex objectives ($\sigma=0$) or non-smooth objectives using the \emph{black box reduction} techniques proposed in \parencite{Allen-ZhuHazan2016_optimala}.

Coordinate descent methods, in which a chosen coordinate block $i_{k}$ is updated at every iteration, are a popular way to solve \eqref{eq:def:Problem}. Randomized block coordinate descent (\texttt{RBCD}, \parencite{Nesterov2012_efficiency}) updates a uniformly randomly chosen coordinate block $i_{k}$ with a gradient-descent-like step: $x_{k+1}=x_{k}-(1/L_{i_{k}})\nabla_{i_{k}}f(x_{k})$. This algorithm decreases the error $\EE(f\p{x_{k}}-f(x_{*}))$ to $\eps\p{f\p{x_{0}}-f(x_{*})}$ in $K\p{\eps}=\cO(n(\bar{L}/\sigma)\ln(1/\eps))$ iterations. 

Using a series of averaging and extrapolation steps, \emph{accelerated} \texttt{RBCD} \parencite{Nesterov2012_efficiency} improves \texttt{RBCD}'s iteration complexity $K\p{\eps}$ to $\cO(n\sqrt{\bar{L}/\sigma}\ln(1/\eps))$, which leads to much faster convergence when $\frac{\bar{L}}{\sigma}$ is large. This rate is optimal when all $L_i$ are equal \parencite{LanZhou2017_optimal}. Finally, using a special probability distribution for the random block index $i_{k}$, non-uniform accelerated coordinate descent method \parencite{Allen-ZhuQuRichtarikYuan2016_even} (\texttt{NU\_ACDM}) can further decrease the complexity to $\cO(\sum_{i=1}^{n}\sqrt{L_{i}/\sigma}\ln(1/\eps))$, which can be up to $\sqrt{n}$ times faster than accelerated \texttt{RBCD}, since some $L_{i}$ can be significantly smaller than $L$. \texttt{NU\_ACDM} is the current state-of-the-art coordinate descent algorithm for solving \eqref{eq:def:Problem}. 

Our $\algoNameS$ algorithm generalizes \texttt{NU\_ACDM} to the asynchronous-parallel case. We solve \eqref{eq:def:Problem} with a collection of $p$ computing nodes that continually read a shared-access solution vector $y$ into local memory then compute a block gradient $\nabla_{i}f$, which is used to update shared solution vectors $\p{x,y,z}$. Proving convergence in the asynchronous case requires extensive new technical machinery.

A traditional synchronous-parallel implementation is organized into rounds of computation: Every computing node must complete an update in order for the next iteration to begin. However, this synchronization process can be extremely costly, since the lateness of a single node can halt the entire system. This becomes increasingly problematic with scale, as differences in node computing speeds, load balancing, random network delays, and bandwidth constraints mean that a synchronous-parallel solver may spend more time waiting than computing a solution.

Computing nodes in an asynchronous solver do not wait for others to complete and share their updates before starting the next iteration, but simply continue to update the solution vectors with the most recent information available, without any central coordination. This eliminates costly idle time, meaning that asynchronous algorithms can be much faster than traditional ones, since they have much faster iterations. For instance, random network delays cause asynchronous algorithms to complete iterations $\Omega(\ln(p))$ time faster than synchronous algorithms at scale. This and other factors that influence the speed of iterations are discussed in \parencite{HannahYin2017_more}. However, since many iterations may occur between the time that a node reads the solution vector, and the time that its computed update is applied, effectively the solution vector is being updated with outdated information. At iteration $k$, the block gradient $\nabla_{i_{k}}f$ is computed at a \emph{delayed iterate} $\hat{y}_{k}$ defined as:
\begin{align}
\hat{y}_{k} & =\p{y_{\p{k-j\p{k,1}}},\ldots,y_{\p{k-j\p{k,n}}}}\label{eq:def:Delayed-iterate}
\end{align}
for delay parameters $j\p{k,1},\ldots,j\p{k,n}\in\NN$. Here $j\p{k,i}$ denotes how many iterations out of date coordinate block $i$ is at iteration $k$. Different block may be out of date by different amounts, which is known as an \emph{inconsistent read}
%\footnote{A more general inconsistent read is possible, in which every coordinate is outdated by a potentially different amount with very little change to the proofs. However, for simplicity, we limit ourselves to this case.}
. We assume\footnote{This condition can be relaxed however by techniques in \parencite{HannahYin2017_unboundeda,SunHannahYin2017_asynchronousb,PengXuYanYin2017_convergence,HannahYin2017_more}} that $j\p{k,i}\leq\tau$ for some constant $\tau<\infty$.

\textbf{Our results:} In this paper, we prove that $\algoNameS$ attains \texttt{NU\_ACDM}'s state-of-the-art iteration complexity to highest order for solving \eqref{eq:def:Problem}, so long as delays are not too large. Hence we prove that there is no significant complexity penalty, despite the use of outdated information. The proof is very different from that of \parencite{Allen-ZhuQuRichtarikYuan2016_even}, and involves significant technical innovations and formidable complexity related to the analysis of asynchronicity. 

Since asynchronous algorithms have much faster iterations, and $\algoNameS$ needs essentially the same number of epochs as \texttt{NU\_ACDM} to compute a solution of a target accuracy, we expect $\algoNameS$ to be faster than all existing coordinate descent algorithms. We confirm this with computational experiments, comparing \texttt{A2BCD} to \texttt{NU\_ACDM}, which is the current fastest block coordinate descent algorithm. 

We also prove that \texttt{A2BCD} (and hence \texttt{NU\_ACDM}) has optimal complexity to within a constant factor over a fairly general class of randomized block coordinate descent algorithms. We do this by proving that any algorithm $A$ in this class must in general complete at least $\Omega\p{\sum_{i=1}^{n}\sqrt{L_{i}/\sigma}\ln(1/\eps)}$ random gradient evaluations to decrease the error by a factor of $\epsilon$. This extends results in \parencite{LanZhou2017_optimal} to the case where $L_i$ are not all equal, and the algorithm in question can be asynchronous.

These results are significant, because it was an open question whether Nesterov-type acceleration was compatible with asynchronicity. Not only is this possible, but asynchronous algorithms may even attain optimal complexity in this setting.  In light of the above, it also seems plausible that accelerated incremental algorithms for finite sum problems $f\triangleq \sum_{i=1}^m f_i(x)$ can be made asynchronous-parallel, too.  

%Since asynchronous algorithms have faster iterations, and can attain optimal complexity, we argue that asynchronous algorithms may drastically outperform traditional ones in a wide variety of circumstances.

We also derive a second-order ordinary differential equation (ODE), which is the continuous-time limit of $\algoNameS$. This extends the ODE found in \parencite{SuBoydCandes2014_differential} to an \emph{asynchronous} accelerated algorithm minimizing a \emph{strongly convex} function. We prove this ODE linearly converges to a solution with the same rate as $\algoNameS$'s, without needing to resort to the restarting technique employed in \parencite{SuBoydCandes2014_differential}. We prove this result using techniques that motivate and clarify the our proof strategy of the main result. 

%%%%%%%%%%%%%%%%%%%%%%%%%%%%%%%%%%%%%%%%
\section{Main results\label{subsec:Main-result}}
%%%%%%%%%%%%%%%%%%%%%%%%%%%%%%%%%%%%%%%%
We define the \textbf{condition number} $\kappa=L/\sigma$, and let
$\underbar{L}=\min_{i}L_{i}$ be the smallest block Lipschitz constant. We should consider functions $f$ where it is efficient to calculate blocks of the gradient, so that coordinate-wise parallelization is efficient. That is, the function should be ``coordinate friendly'' \parencite{PengWuXuYanYin2016_coordinate}. This turns out to be a rather wide class of algorithms. So for instance, while the $L^2$-regularized empirical risk minimization problem is not coordinate friendly in general, the equivalent dual problem is, and hence can be solved efficiently by $\algoNameS$ (see \parencite{LinLuXiao2014_accelerateda}, and Section \ref{subsec:Numerical-experiments}).

To calculate the $k+1$'th iteration of the algorithm from iteration $k$, we use a block of the gradient $\nabla_{i_{k}}f$. Finally, we assume that the delays $j\p{k,i}$ are independent of the block sequence $i_{k}$, but otherwise arbitrary (This assumption can be relaxed. See \parencite{SunHannahYin2017_asynchronousb,LeblondPedregosaLacoste-Julien2017_asaga,CannelliFacchineiKungurtsevScutari2017_asynchronous}).

%%%%%%%%%%%%%%%%%%%%%%%%%%%%%%%%%%%%%%%%
\begin{defn}[Asynchronous Accelerated Randomized Block Coordinate Descent ($\algoNameS$)]\label{def:AANRBCD}
%%%%%%%%%%%%%%%%%%%%%%%%%%%%%%%%%%%%%%%%
Let $f$ be $\sigma$-strongly convex, and let its gradient $\nabla f$ be $L$-Lipschitz with block coordinate Lipschitz parameters $L_{i}$ \eqref{eq:def:Coordinate-smoothness}. Using these parameters, we sample $i_{k}$ in an independent and identically distributed (IID) fashion according to
\begin{align}
\PP\sp{i_{k}=j} & =L_{j}^{1/2}/S,\quad j\in\cp{1,\ldots,n},
\text{ for }S \triangleq\sum_{i=1}\nolimits^{n}L_{i}^{1/2}.\label{eq:Block-probability-distribution}
\end{align}
Let $\tau$ be the maximum asynchronous delay in the parallel system (or an overestimate of this). We define the dimensionless \textbf{asynchronicity parameter} $\psi$, which is proportional to the maximum delay $\tau$, and quantifies how strongly asynchronicity will affect convergence:
\begin{align}
\psi & =9\p{S^{-1/2}\underbar{L}^{-1/2}L^{3/4}\kappa^{1/4}}\times\tau\label{eq:def:psi}
\end{align}
We use the above system parameters and $\psi$ to define the coefficients $\alpha,\beta$, and $\gamma$ via \eqref{eq:def:alpha},\eqref{eq:def:beta}, and \eqref{eq:def:h}.
\begin{align}
\alpha & \triangleq \p{1+\p{1+\psi}\sigma^{-1/2}S}^{-1}\label{eq:def:alpha}\\
\beta & \triangleq 1-\p{1-\psi}\sigma^{1/2}S^{-1}\label{eq:def:beta}\\
h & \triangleq 1-\frac{1}{2}\sigma^{1/2}\underbar{L}^{-1/2}\psi.\label{eq:def:h}
\end{align}
Hence $\algoNameS$ algorithm is hence defined via the iterations: \eqref{eq:Algorithm-y},\eqref{eq:Algorithm-x}, and \eqref{eq:Algorithm-v}:
\begin{align}
y_{k} & =\alpha v_{k}+\p{1-\alpha}x_{k}\label{eq:Algorithm-y},\\
x_{k+1} & =y_{k}-hL_{i_{k}}^{-1}\nabla_{i^{k}}f\p{\hat{y}_{k}}\label{eq:Algorithm-x},\\
v_{k+1} & =\beta v_{k}+\p{1-\beta}y_{k}-\sigma^{-1/2}L_{i_{k}}^{-1/2}\nabla_{i^{k}}f\p{\hat{y}_{k}}.\label{eq:Algorithm-v}
\end{align}

\end{defn}
%%%%%%%%%%%%%%%%%%%%%%%%%%%%%%%%%%%%%%%%

Here $\sigma$ may be underestimated, and $L,L_{1},\ldots,L_{n}$ may be overestimated if exact values are unavailable. Notice that $x_{k}$ can be eliminated from the above iteration, and the block gradient $\nabla_{i_{k}}f\p{\hat{y}_{k}}$ only needs to be calculated once per iteration. A larger (or overestimated) maximum delay $\tau$ will cause a larger asynchronicity parameter $\psi$, which leads to more conservative step sizes to compensate.

The convergence of this algorithm can be stated in terms of a Lyapunov function that we will define shortly. We first introduce the \textbf{asynchronicity error}, a powerful tool for analyzing asynchronous algorithms used in several recent works \parencite{PengXuYanYin2016_arock,HannahYin2017_unboundeda,SunHannahYin2017_asynchronousb,HannahYin2017_more}. This error is a weighted sum of the history of the algorithm, with the weights $c_{i}$ decreasing as one goes further back in time. This error appears naturally in the analysis. Much like a well-chosen basis in linear algorithm, it appears to be a natural quantity to consider when analyzing convergence of asynchronous algorithms.

%%%%%%%%%%%%%%%%%%%%%%%%%%%%%%%%%%%%%%%%
\begin{defn}[Asynchronicity error]\label{def:Asynchronicity-error}
%%%%%%%%%%%%%%%%%%%%%%%%%%%%%%%%%%%%%%%%
 Using the above parameters, we define:
\begin{align}
A_{k} & =\sumf j{\tau}c_{j}\n{y_{k+1-j}-y_{k-j}}^{2}\label{eq:def:Asynchronicity-error-and-ci}\\
\text{ for }c_{i} & =\frac{6}{S} L^{1/2}\kappa^{3/2}\tau\sum_{j=i}^{\tau}\p{1-\sigma^{1/2}S^{-1}}^{i-j-1}\psi^{-1}.
\end{align}
Here we define $y_{k}=y_{0}$ for all $k<0$.
\end{defn}
The determination of the coefficients $c_{i}$ is in general a very
involved process of trial and error, intuition, and balancing competing
requirements. Obtaining a convergence proof and optimal rates relies
on the skillful choice of these coefficients $c_{i}$. The algorithm
doesn't depend on the coefficients, however; they are only used in
the analysis.

We define $\EE_{k}[X]$ as the expectation of $X$ conditional on conditioned on $\p{x_{0},\ldots,x_{k}}$, $\p{y_{0},\ldots,y_{k}}$, $\p{z_{0},\ldots,z_{k}}$, and $\p{i_{0},\ldots,i_{k-1}}$. To simplify notation\footnote{We can assume $x_{*}=0$ with no loss in generality since we may translate
the coordinate system so that $x_{*}$ is at the origin. We can assume
$f\p{x^{*}}=0$ with no loss in generality, since we can replace $f\p x$
with $f\p x-f\p{x_{*}}$. Without this assumption, the Lyapunov function
simply becomes: $\n{v_{k}-x_{*}}^{2}+A_{k}+c\p{f\p{x_{k}}-f\p{x_{*}}}$.}, we assume that the minimizer $x^{*}=0$, and that $f\p{x^{*}}=0$
with no loss in generality. We
define the \textbf{Lyapunov function}:
\begin{align}
\rho_{k} & =\n{v_{k}}^{2}+A_{k}+cf\p{x_{k}}\label{eq:def:rho}\\
\text{ for }c & =2\sigma^{-1/2}S^{-1}\p{\beta\alpha^{-1}\p{1-\alpha}+1}\label{eq:def:c}.
\end{align}
We define the iteration complexity $K\p{\eps}$ with
respect to some error $E^{k}$ as the number of iterations $K$ such
that the expected error $\EE\sp{E^{K}}$ decreases to less than $\eps E^{0}$.

We now present this paper's first main contribution.

%%%%%%%%%%%%%%%%%%%%%%%%%%%%%%%%%%%%%%%%
\begin{thm}
%%%%%%%%%%%%%%%%%%%%%%%%%%%%%%%%%%%%%%%%
\textup{Let $f$ be $\sigma$-strongly convex with a gradient $\nabla f$ that is $L$-Lipschitz with block Lipschitz constants $\cp{L_{i}}_{i=1}^{n}$. Let $\psi$ defined in \eqref{eq:def:psi} satisfy $\psi\leq\frac{3}{7}$ (i.e. $\tau\leq\frac{1}{21}S^{1/2}\underbar{L}^{1/2}L^{-3/4}\kappa^{-1/4}$). Then for $\algoNameS$ we have:\label{thm:Main-theorem}
\begin{align*}
 & \EE_{k}\sp{\n{v_{k+1}}^{2}+A_{k+1}+cf\p{x_{k+1}}} \leq\p{1-\p{1-\psi}\sigma^{1/2}S^{-1}}\p{\n{v_{k}}^{2}+A_{k}+cf\p{x_{k}}}
\end{align*}
As $\sigma^{-1/2}S\to\infty$, the error $\n{v_{k+1}}^{2}+A_{k+1}+cf\p{x_{k+1}}$ has the corresponding complexity $K\p{\eps}=K_{\algoNameS}\p{\eps}$
for:\vspace{-10pt}
\small \begin{align}
K_{\algoNameS}\p{\eps} & =\p{\sum_{i=1}^{n}\sqrt{L_{i}/\sigma}+\cO\p 1}\frac{\ln\p{1/\eps}}{1-\psi}
\end{align} \normalsize
}
\end{thm}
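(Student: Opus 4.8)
The plan is to establish the one-step contraction inequality, from which the complexity estimate follows by a routine expansion as $\sigma^{-1/2}S\to\infty$. The strategy is a Lyapunov argument: compute $\EE_k[\rho_{k+1}]$ term by term, using the algorithm updates \eqref{eq:Algorithm-y}--\eqref{eq:Algorithm-v}, and show it is bounded by $(1-(1-\psi)\sigma^{1/2}S^{-1})\rho_k$. First I would handle $\EE_k\n{v_{k+1}}^2$: substituting \eqref{eq:Algorithm-v}, expand the square into the ``synchronous'' part (the part one would get with $\hat y_k = y_k$) plus cross terms and error terms involving the discrepancy $\nabla_{i_k}f(\hat y_k) - \nabla_{i_k}f(y_k)$. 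The IID sampling law \eqref{eq:Block-probability-distribution} is chosen precisely so that $\EE_k[L_{i_k}^{-1/2}\nabla_{i_k}f(\hat y_k)] = S^{-1}\nabla f(\hat y_k)$ and $\EE_k[L_{i_k}^{-1}\n{\nabla_{i_k}f(\hat y_k)}^2] = S^{-1}\sum_i L_i^{-1/2}\n{\nabla_i f(\hat y_k)}^2$; these identities drive the whole computation. Next I would handle $c\,\EE_k f(x_{k+1})$ using $L_{i_k}$-block-smoothness \eqref{eq:def:Coordinate-smoothness} along the update \eqref{eq:Algorithm-x} to get a descent bound $f(x_{k+1}) \le f(y_k) + \langle \nabla_{i_k}f(y_k), x_{k+1}-y_k\rangle + \tfrac{L_{i_k}}{2}\n{x_{k+1}-y_k}^2$, then take $\EE_k$, and finally strong convexity of $f$ to pass from $f(y_k)$ back toward $f(x_k)$ and produce the negative $-\sigma\n{\cdot}^2$ terms needed to absorb leftovers.

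The delay-dependent part is the crux. The terms involving $\nabla_{i_k}f(\hat y_k) - \nabla_{i_k}f(y_k)$ must be controlled by $\sum_{j=1}^{\tau}\n{y_{k+1-j}-y_{k-j}}^2$ via block-Lipschitzness: since $\hat y_k$ differs from $y_k$ only in how far back each block is read (at most $\tau$ iterations), a telescoping/triangle-inequality argument bounds $\n{\nabla_{i_k}f(\hat y_k)-\nabla_{i_k}f(y_k)}$ by a sum of consecutive increments $\n{y_{m+1}-y_m}$ over the delay window, and then Young's inequality converts cross terms into a weighted sum of such squared increments. This is exactly what the asynchronicity error $A_k$ is designed to absorb: I would compute $\EE_k[A_{k+1}] - A_k$, which by the definition \eqref{eq:def:Asynchronicity-error-and-ci} equals $c_1\EE_k\n{y_{k+1}-y_k}^2 - \sum_j(c_{j-1}-c_j)\n{\cdot}^2$-type telescoping (plus the boundary term $c_\tau\n{y_{k+1-\tau}-y_{k-\tau}}^2$), and the recursive formula for $c_i$ is chosen so that this exactly cancels the delay error generated by the $v$- and $x$-updates, up to the slack captured by the factor $\psi$. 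Note $\n{y_{k+1}-y_k}^2$ itself must be bounded in terms of $\rho_k$ and the current gradient; here $y_{k+1}-y_k = \alpha(v_{k+1}-v_k) + (1-\alpha)(x_{k+1}-x_k)$, expand and estimate.

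The main obstacle I expect is the bookkeeping of constants: one must verify that with the specific choices of $\alpha,\beta,h$ in \eqref{eq:def:alpha}--\eqref{eq:def:h}, the coefficients $c$ in \eqref{eq:def:c} and $c_i$ in \eqref{eq:def:Asynchronicity-error-and-ci}, every non-Lyapunov term produced in $\EE_k[\n{v_{k+1}}^2 + A_{k+1} + cf(x_{k+1})]$ either cancels or is dominated, and that the surviving contraction factor is exactly $1-(1-\psi)\sigma^{1/2}S^{-1}$ rather than something weaker. In particular the $\psi\le\frac37$ hypothesis should be where the telescoping coefficients $c_{j-1}-c_j$ stay nonnegative and where $h>0$, $\beta\in(0,1)$; I would isolate the single scalar inequality (in $\psi$ and the ratios $\underline L/L$, $S/\sigma^{1/2}$, etc.) that all the slack terms must satisfy and check it holds under that bound, deferring the messier algebra to a technical lemma. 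Once the recursion $\EE_k\rho_{k+1}\le(1-(1-\psi)\sigma^{1/2}S^{-1})\rho_k$ is in hand, taking total expectations and iterating gives linear convergence; the complexity formula follows since $\ln(1-(1-\psi)\sigma^{1/2}S^{-1})^{-1} = (1-\psi)^{-1}\sigma^{1/2}S^{-1}(1+o(1))$ and $\sigma^{1/2}S^{-1} = (\sum_i\sqrt{L_i/\sigma})^{-1}$, so $K(\eps) = (\sum_i\sqrt{L_i/\sigma}+\cO(1))\ln(1/\eps)/(1-\psi)$ as $\sigma^{-1/2}S\to\infty$.
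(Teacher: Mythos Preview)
Your proposal is correct and follows essentially the same architecture as the paper's proof: expand $\EE_k\n{v_{k+1}}^2$ via \eqref{eq:Algorithm-v} and the sampling identities, bound $\EE_k f(x_{k+1})$ by block smoothness, control all delay cross terms by $\sum_{j=1}^\tau\n{y_{k+1-j}-y_{k-j}}^2$ via Young's inequality, absorb those with the $A_k$ recursion, bound $\EE_k\n{y_{k+1}-y_k}^2$, and then verify that every leftover coefficient in the resulting ``master inequality'' is nonpositive under $\psi\le\tfrac37$. One small correction: the paper computes $\EE_k[A_{k+1}-rA_k]$ with $r=1-\sigma^{1/2}S^{-1}\le\beta$ (not $\EE_k[A_{k+1}]-A_k$), which is what makes the $A_k$ term contract at the target rate; and strong convexity is invoked not to pass from $f(y_k)$ to $f(x_k)$ (that step uses plain convexity in the cross-term $\dotp{\nabla f(y_k),x_k-y_k}$) but rather in $-\dotp{\nabla f(y_k),y_k}\le -f(y_k)-\tfrac{\sigma}{2}\n{y_k}^2$.
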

%%%%%%%%%%%%%%%%%%%%%%%%%%%%%%%%%%%%%%%%

This result is proven in Section \ref{sec:Proof-of-main-theorem}. A  stronger result can be proven, however this adds too much to the complexity of the proof. See Section \ref{sec:Extensions} for a discussion. In practice, asynchronous algorithms are far more resilient to delays than the theory predicts. $\tau$ can be much larger without negatively affecting the convergence rate and complexity. This is perhaps because we are limited to a worst-case analysis, which is not representative of the average-case performance. 

Authors in \parencite{Allen-ZhuQuRichtarikYuan2016_even} (Theorem 5.1) obtain a linear convergence rate of $1-2/\p{1+2\sigma^{-1/2}S}$ for \texttt{NU\_ACDM }which leads to the corresponding iteration complexity of $K_{\text{\text{{\tt NU\_ACDM}}}}\p{\eps}=\p{\sigma^{-1/2}\sum_{i=1}^{n}L_{i}^{1/2}+\cO\p 1}\ln\p{1/\eps}$ as $\sigma^{-1/2}S\to\infty$. And hence, we have:
\begin{align*}
K_{\algoNameS}\p{\eps} & =\frac{1}{1-\psi}\p{1+o\p 1}K_{\text{{\tt NU\_ACDM}}}\p{\eps}
\end{align*}
Hence when $0\leq\psi\ll1$, or equivalently, when $\tau\ll S^{1/2}\underbar{L}^{1/2}L^{-3/4}\kappa^{-1/4}$ the complexity of $\algoNameS$ asymptotically matches that of \texttt{NU\_ACDM}. Hence $\algoNameS$ combines state-of-the-art complexity with the faster iterations and superior scaling that asynchronous iterations allow.

We now present some special cases of the conditions on the maximum
delay $\tau$ required for good complexity.

%%%%%%%%%%%%%%%%%%%%%%%%%%%%%%%%%%%%%%%%
\begin{cor}
%%%%%%%%%%%%%%%%%%%%%%%%%%%%%%%%%%%%%%%%
\textup{\label{cor:tau-conditions}Let the conditions of Theorem \ref{thm:Main-theorem} hold. Additionally, let all coordinate-wise Lipschitz constants $L_{i}$ be equal (i.e. $L_{i}=L_{1},\,\forall i$). Then we have $K_{\algoNameS}\p{\eps}\sim K_{\text{\text{{\tt NU\_ACDM}}}}\p{\eps}$ when $\tau \ll n^{1/2}\kappa^{-1/4}\p{L_{1}/L}^{3/4}$.
Further, let us assume all coordinate-wise Lipschitz constants $L_{i}$ equal $L$. Then $K_{\algoNameS}\p{\eps}\sim K_{\text{\text{{\tt NU\_ACDM}}}}\p{\eps}=K_{\text{\text{{\tt ACDM}}}}\p{\eps}$, when $\tau \ll n^{1/2}\kappa^{-1/4}$
}
\end{cor}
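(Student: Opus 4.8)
The plan is to derive Corollary \ref{cor:tau-conditions} as a direct specialization of Theorem \ref{thm:Main-theorem} together with the comparison $K_{\algoNameS}(\eps)=\frac{1}{1-\psi}(1+o(1))K_{\text{{\tt NU\_ACDM}}}(\eps)$ recorded in the discussion following that theorem. Under the hypotheses of the corollary the complexity statements are to be read in the asymptotic regime $\sigma^{-1/2}S\to\infty$; here ``$K_{\algoNameS}(\eps)\sim K_{\text{{\tt NU\_ACDM}}}(\eps)$'' means the ratio tends to $1$, which (since $0\le\psi$) holds exactly when $\frac{1}{1-\psi}\to 1$, i.e.\ when $\psi\to 0$. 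So the entire task is to rewrite the asynchronicity parameter $\psi$ of \eqref{eq:def:psi} under the equal-constant assumptions and read off the condition on $\tau$ that sends $\psi\to0$.

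First I would substitute $L_i=L_1$ for all $i$ into \eqref{eq:def:psi}. Then $S=\sum_{i=1}^n L_i^{1/2}=nL_1^{1/2}$ and $\underbar{L}=\min_i L_i=L_1$, so $S^{-1/2}\underbar{L}^{-1/2}=(nL_1^{1/2})^{-1/2}L_1^{-1/2}=n^{-1/2}L_1^{-3/4}$ and
\[
\psi=9\,S^{-1/2}\underbar{L}^{-1/2}L^{3/4}\kappa^{1/4}\,\tau=9\,n^{-1/2}\p{L/L_1}^{3/4}\kappa^{1/4}\,\tau .
\]
Hence $\psi\ll1$ precisely when $\tau\ll n^{1/2}\kappa^{-1/4}\p{L_1/L}^{3/4}$, which in particular eventually forces $\psi\le\tfrac37$, so Theorem \ref{thm:Main-theorem} and its complexity consequence apply in this regime. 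Since $\frac{1}{1-\psi}\to1$, we conclude $K_{\algoNameS}(\eps)=\frac{1}{1-\psi}(1+o(1))K_{\text{{\tt NU\_ACDM}}}(\eps)\sim K_{\text{{\tt NU\_ACDM}}}(\eps)$, which is the first claim.

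For the second claim, I specialize further to $L_i=L$ for all $i$. Then $L_1=L$ gives $\p{L_1/L}^{3/4}=1$, so the condition on $\tau$ collapses to $\tau\ll n^{1/2}\kappa^{-1/4}$, matching the first claim's bound in this case. It then only remains to note that when all $L_i$ are equal the sampling distribution \eqref{eq:Block-probability-distribution} becomes $\PP[i_k=j]=L^{1/2}/(nL^{1/2})=1/n$, i.e.\ uniform, so \texttt{NU\_ACDM} reduces to ordinary accelerated coordinate descent \texttt{ACDM}; correspondingly $K_{\text{{\tt NU\_ACDM}}}(\eps)=\p{\sigma^{-1/2}\sum_{i=1}^n L_i^{1/2}+\cO(1)}\ln(1/\eps)=\p{n\sqrt{\kappa}+\cO(1)}\ln(1/\eps)=K_{\text{{\tt ACDM}}}(\eps)$. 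Chaining this with the first claim yields $K_{\algoNameS}(\eps)\sim K_{\text{{\tt NU\_ACDM}}}(\eps)=K_{\text{{\tt ACDM}}}(\eps)$.

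There is no real obstacle here: the corollary is bookkeeping on top of Theorem \ref{thm:Main-theorem}. The only points deserving a little care are (i) carrying out the exponent arithmetic $S^{-1/2}\underbar{L}^{-1/2}=n^{-1/2}L_1^{-3/4}$ correctly (one $L_1^{-1/4}$ from $S^{-1/2}$, one $L_1^{-1/2}$ from $\underbar{L}^{-1/2}$), and (ii) checking that letting $\tau$ be a fixed constant while $n\to\infty$ is consistent with the hypothesis $\sigma^{-1/2}S\to\infty$ of Theorem \ref{thm:Main-theorem}, which here reads $n\sqrt{\kappa}\to\infty$ and is exactly the regime in which all the complexity statements are meant to be interpreted.
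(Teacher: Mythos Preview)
Your proposal is correct and follows exactly the route the paper intends: the corollary is stated without proof in the paper, as an immediate specialization of the general condition $\tau\ll S^{1/2}\underbar{L}^{1/2}L^{-3/4}\kappa^{-1/4}$ (equivalently $\psi\ll1$) recorded in the discussion after Theorem~\ref{thm:Main-theorem}, and you have carried out that specialization cleanly. The exponent bookkeeping $S^{-1/2}\underbar{L}^{-1/2}=n^{-1/2}L_1^{-3/4}$ and the identification of \texttt{NU\_ACDM} with \texttt{ACDM} under uniform sampling are both right.
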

%%%%%%%%%%%%%%%%%%%%%%%%%%%%%%%%%%%%%%%%

%%%%%%%%%%%%%%%%%%%%%%%%%%%%%%%%%%%%%%%%
\begin{rem}[Reduction to synchronous case]
%%%%%%%%%%%%%%%%%%%%%%%%%%%%%%%%%%%%%%%%
 Notice that when $\tau=0$, we have $\psi=0$, $c_{i}\equiv0$ and hence $A_{k}\equiv0$. Thus $\algoNameS$ becomes equivalent to \texttt{NU\_ACDM}, the Lyapunov function\footnote{Their Lyapunov function is in fact a generalization of a one found in \parencite{Nesterov2012_efficiency}} $\rho_{k}$ becomes equivalent to one found in \parencite{Allen-ZhuQuRichtarikYuan2016_even}(pg. 9), and Theorem \ref{thm:Main-theorem} yields the same complexity to highest order. 

\end{rem}
%%%%%%%%%%%%%%%%%%%%%%%%%%%%%%%%%%%%%%%%

%%%%%%%%%%%%%%%%%%%%%%%%%%%%%%%%%%%%%%%%
\subsection{Optimality}
%%%%%%%%%%%%%%%%%%%%%%%%%%%%%%%%%%%%%%%%
$\texttt{NU\_ACDM}$ and hence $\texttt{A2BCD}$ are in fact optimal
among a wide class of randomized block gradient algorithms. We
consider a class of algorithms slightly wider than that considered
in \parencite{LanZhou2017_optimal} to encompass asynchronous algorithms. Their result only apply to the case where all $L_i$ are equal in this setting. Our result applies for unequal $L_i$, and potentially asynchronous algorithms. For a subset $S\in\RR^{d}$, we let $\text{IC}\p S$ (inconsistent read) denote the set of vectors $v$ such that $v  =\p{v_{1,1},v_{2,2},\ldots,v_{d,d}}$ for some vectors $v_{1},v_{2},\ldots,v_{d}\in S$. Here $v_{i,j}$
denotes the $j$th component of vector $v_i$. That is, the coordinates
of $v$ are some combination of coordinates of vectors in $S$. Let
$X_{k}=\cp{x_{0},\ldots,x_{k}}$. 

%%%%%%%%%%%%%%%%%%%%%%%%%%%%%%%%%%%%%%%%
\begin{defn}[Asynchronous Randomized Incremental Algorithms Class]
%%%%%%%%%%%%%%%%%%%%%%%%%%%%%%%%%%%%%%%%
Consider the unconstrained minimization problem \eqref{eq:def:Problem} for $f$ that is $\sigma$-strongly convex with $L$-Lipschitz gradient, with block-coordinate-wise Lipschitz constants $\cp{L_i}_{i=1}^n$. We define the class $\mathcal{A}$ as algorithms $G$ on this problem such that:

1. For each parameter set $(\sigma,L_{1},\ldots,L_{n},n)$, $G$ has
an associated IID random variable $i_{k}$ with some fixed distribution $\PP\sp{i_{k}}=p_{i}$ for $\sum_{i=1}^n p_i = 1$.

2. For $k\geq0$, the iterates of $A$ satisfy:
\begin{align}
x_{k+1} & \in\text{span}\{\text{IC}\p{X_{k}},\nabla_{i_{0}}f\p{\text{IC}\p{X_{0}}},\nabla_{i_{1}}f\p{\text{IC}\p{X_{1}}},\ldots,\nabla_{i_{k}}f\p{\text{IC}\p{X_{k}}}\}\label{eq:xkp1-in-IC-span}
\end{align}
\end{defn}
%%%%%%%%%%%%%%%%%%%%%%%%%%%%%%%%%%%%%%%%

This is a rather general class: $x_{k+1}$ can be constructed from any inconsistent reading of past iterates $\text{IC}\p{X_{k}}$, and any past gradient of an inconsistent read $\nabla_{i_{j}}f\p{\text{IC}\p{X_{j}}}$. 

%%%%%%%%%%%%%%%%%%%%%%%%%%%%%%%%%%%%%%%%
\begin{thm}
%%%%%%%%%%%%%%%%%%%%%%%%%%%%%%%%%%%%%%%%
For any algorithm $G\in\mathcal{A}$, and parameter set $(\sigma,L_{1},\ldots,L_{n},n)$ for the unconstrained minimization problem, and $k\in\NN$, there is a dimension $d$, a corresponding function $f$ on $\RR^d$, and a starting point $x_{0}$, such that 
\begin{align*}
\EE\n{x_{k}-x_{*}}^{2}/\n{x_{0}-x_{*}}^{2} & \geq \frac{1}{2}\big(1-4/\big(\sum\nolimits_{j=1}^{n}\sqrt{L_{i}/\sigma}+2n\big)\big)^{k}
\end{align*}
Hence the complexity $I\p{\eps}$ for all algorithms in $\mathcal{A}$ satisfies
\begin{align*}
K\p{\eps} & \geq\frac{1}{4}\p{1+o\p 1}\big(\sum\nolimits_{j=1}^{n}\sqrt{L_{i}/\sigma}+2n\big)\ln\p{1/2\eps}
\end{align*}
\end{thm}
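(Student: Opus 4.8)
Here is a proof plan for the lower bound.

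\medskip

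\emph{Overview and construction.} The plan is to adapt the ``resisting oracle'' worst-function technique of \parencite{Nesterov2012_efficiency,LanZhou2017_optimal} to a \emph{block-separable} hard instance, so that unequal $L_i$ are handled by a pigeonhole argument and the inconsistent-read freedom in the class $\mathcal A$ is shown to be worthless. Fix the algorithm $G\in\mathcal A$, hence its distribution $p_1,\dots,p_n$, and fix $k\in\NN$. Choose block sizes $m_i>k$, set $d=\sum_i m_i$, and on block $i$ put a scaled copy $\phi_i$ of Nesterov's strongly convex worst-case quadratic: a tridiagonal (``chain'') quadratic form, plus $\tfrac{\sigma}{2}\n{\cdot}^2$, plus a linear term $-w_i^{1/2}\mu_i\,u_1$ supported \emph{only on that block's first coordinate}, calibrated (with a minor rank-one correction if needed) so that $\phi_i$ is $\sigma$-strongly convex and $\nabla\phi_i$ is $L_i$-Lipschitz. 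Let $f(x)=\sum_i\phi_i(x_{(i)})$. Block-separability makes the Hessian block-diagonal, so $f$ is $\sigma$-strongly convex, $\nabla_i f=\nabla\phi_i$ is $L_i$-Lipschitz in block $i$, and $\nabla f$ is $(\max_i L_i)\le L$-Lipschitz; hence $f$ is admissible for the parameter set $(\sigma,L_1,\dots,L_n,n)$. Its minimizer $x_*$ has $s$-th coordinate of block $i$ proportional to $w_i^{1/2}\bar q_i^{\,s}$ with $\bar q_i=\frac{\sqrt{L_i/\sigma}-1}{\sqrt{L_i/\sigma}+1}$, so $\n{x_{*,(i)}}^2\propto w_i$; we take $x_0=0$ and are free to choose the weights $w_i\ge 0$.

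\emph{A support invariant robust to inconsistent reads.} Let $N_i(k)=\#\{0\le j<k:i_j=i\}$. By strong induction on $m$, using \eqref{eq:xkp1-in-IC-span}, the block-$i$ component of $x_m$ is supported on the first $N_i(m)$ coordinates of that block, for \emph{every} realization of $(i_0,i_1,\dots)$. Indeed, if $x_0,\dots,x_m$ all have block-$i$ support in $\{1,\dots,N_i(m)\}$, then every $v\in\mathrm{IC}(X_m)$ does too, since its block-$i$ coordinates are copied coordinatewise from those iterates; because $\phi_i$ is tridiagonal, $\nabla_i f(v)$ is then supported on $\{1,\dots,N_i(m)+1\}$, while $\nabla_{i'}f$ with $i'\ne i$ contributes nothing to block $i$ — this separability-plus-chain structure is exactly why inconsistent reads give no extra power. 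Taking the span over all past gradients enlarges the block-$i$ support only when block $i$ was actually queried, which establishes the claim for $x_{m+1}$ (the base case $x_0=0$ is immediate, and the first query of a block produces only its first coordinate since the linear term lives there). Consequently, pointwise,
\[
\n{x_m-x_*}^2\ \ge\ \sum_{i=1}^n\ \sum_{s>N_i(m)} (x_{*,(i)})_s^2\ \ge\ \tfrac12\sum_{i=1}^n \bar q_i^{\,2N_i(m)}\,\n{x_{*,(i)}}^2 ,
\]
where the $\tfrac12$ absorbs the truncation at $m_i>k\ge N_i(m)$ (the infinite-chain tail identity $\sum_{s>N}(x_{*,(i)})_s^2=\bar q_i^{\,2N}\n{x_{*,(i)}}^2$ holds exactly).

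\emph{Expectation and a pigeonhole bound.} Since $N_i(k)\sim\mathrm{Binomial}(k,p_i)$, we have $\EE[\bar q_i^{\,2N_i(k)}]=(1-p_i(1-\bar q_i^{2}))^{k}$. Writing $w_i=\n{x_{*,(i)}}^2$ and taking expectations,
\[
\frac{\EE\n{x_k-x_*}^2}{\n{x_0-x_*}^2}\ \ge\ \tfrac12\,\frac{\sum_i w_i\,(1-p_i(1-\bar q_i^{2}))^{k}}{\sum_i w_i}\ \ge\ \tfrac12\big(1-\textstyle\min_i p_i(1-\bar q_i^{2})\big)^{k},
\]
the last step by concentrating the free weights $w_i$ on a block maximizing $1-p_i(1-\bar q_i^{2})$. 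Now apply the elementary inequality $\min_i a_i\le(\sum_i\omega_i a_i)/(\sum_i\omega_i)$ with $a_i=p_i(1-\bar q_i^{2})$ and $\omega_i=\tfrac{1}{1-\bar q_i^{2}}=\tfrac14\big(\sqrt{L_i/\sigma}+2+\sqrt{\sigma/L_i}\big)\ge\tfrac14(\sqrt{L_i/\sigma}+2)$: since $\omega_i a_i=p_i$ and $\sum_i p_i=1$,
\[
\min_i p_i(1-\bar q_i^{2})\ \le\ \frac{1}{\sum_i \tfrac{1}{1-\bar q_i^{2}}}\ \le\ \frac{4}{\sum_{i=1}^n\sqrt{L_i/\sigma}+2n}.
\]
This yields $\EE\n{x_k-x_*}^2/\n{x_0-x_*}^2\ge\tfrac12\big(1-4/(\sum_j\sqrt{L_j/\sigma}+2n)\big)^{k}$. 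Reaching relative error $\eps$ requires $\tfrac12(1-4/M)^{k}\le\eps$ with $M=\sum_j\sqrt{L_j/\sigma}+2n$, i.e. $K(\eps)\ge\ln(1/(2\eps))/(-\ln(1-4/M))\ge\tfrac14(1+o(1))\,M\,\ln(1/(2\eps))$ as $M\to\infty$, using $-\ln(1-x)\le x/(1-x)$.

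\emph{Main obstacle.} The one genuinely new ingredient beyond \parencite{LanZhou2017_optimal} is the support invariant: one must verify that letting $x_{k+1}$ depend on \emph{all} past inconsistent reads $\mathrm{IC}(X_j)$ and \emph{all} past gradients $\nabla_{i_j}f(\mathrm{IC}(X_j))$ — rather than on a single consistent iterate and its gradient — still cannot grow the support of any block faster than one coordinate per query of that block. Block-separability together with the chain structure make this go through, provided the linear term of each $\phi_i$ sits only in that block's first coordinate and the truncation $m_i>k$ is chosen so it corrupts neither the support argument nor the tail estimate; once this is in place, the passage to unequal $L_i$ reduces to the one-line pigeonhole inequality above, and the rest is the standard worst-case quadratic computation.
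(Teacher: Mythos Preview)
Your proposal is correct and follows essentially the same route as the paper's proof in Section~\ref{sec:Optimality-proof}: the same block-separable Nesterov tridiagonal quadratic, the same support/subspace invariant showing that inconsistent reads gain nothing under block-separability, the same binomial computation $\EE[\bar q_i^{\,2N_i(k)}]=(1-p_i(1-\bar q_i^{2}))^k$, and the same pigeonhole on $\min_i p_i(1-\bar q_i^{2})$ (the paper phrases it as minimizing $\max_i(1-(1-q_i^2)p_i)$ over $p$, and chooses $x_0$ equal to $x_*$ on all blocks but one rather than your equivalent device of concentrating the weights $w_i$). The only slip is the truncation: merely taking $m_i>k$ does \emph{not} guarantee the pointwise inequality $\sum_{s>N}(x_{*,(i)})_s^2\ge\tfrac12\bar q_i^{\,2N}\|x_{*,(i)}\|^2$ (try $\bar q_i$ close to $1$ and $m_i=k+1$, $N=k$); the paper instead fixes block size $b=2k$, takes the expectation first, and then uses $(1-(1-\bar q_i^{2})p_i)^k\ge\bar q_i^{\,2k}$ to extract the clean factor $1/(1+\bar q_i^{\,2k})\ge\tfrac12$---you should adopt either this or simply choose $m_i$ large enough that the finite tail is within a factor $\tfrac12$ of the infinite one.
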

%%%%%%%%%%%%%%%%%%%%%%%%%%%%%%%%%%%%%%%%

Our proof in Section \ref{sec:Optimality-proof} follows very similar lines to \parencite{LanZhou2017_optimal}, which is inspired by Nesterov \parencite{Nesterov2013_introductory}.

%%%%%%%%%%%%%%%%%%%%%%%%%%%%%%%%%%%%%%%%
\section{ODE Analysis\label{subsec:ODE-Analysis}}
%%%%%%%%%%%%%%%%%%%%%%%%%%%%%%%%%%%%%%%%
In this section we present and analyze an ODE which is the continuous-time
limit of $\algoNameS$. This ODE is a strongly convex, and asynchronous
version of the ODE found in \parencite{SuBoydCandes2014_differential}.
For simplicity, assume $L_{i}=L,\,\forall i$. We rescale\footnote{I.e. we replace $f(x)$ with $\frac{1}{\sigma}f$.} $f$ so that the strong convexity modulus $\sigma=1$, and hence $\kappa=L/\sigma=L$. Taking the discrete limit of synchronous $\algoNameS$ (i.e. accelerated \texttt{RBCD}), we can derive the following ODE\footnote{For compactness, we have omitted the $(t)$ from time-varying functions $Y(t)$, $\dot{Y}(t)$, $\nabla Y(t)$, etc.} (see Section \eqref{subsec:Derivation-of-ODE}):
\begin{align}
\ddot{Y}+2n^{-1}\kappa^{-1/2}\dot{Y}+2n^{-2}\kappa^{-1}\nabla f\p Y & =0\label{eq:Acceleration-ODE}
\end{align}
We define the parameter $\eta\triangleq n\kappa^{1/2}$, and the energy:
\begin{align}
E\p t & =e^{n^{-1}\kappa^{-1/2}t}\p{f\p{Y}+\frac{1}{4}\n{Y+\eta \dot{Y}}^{2}}\label{eq:ODE-Energy}
\end{align}
This is very similar to the Lyapunov function discussed in \eqref{eq:def:rho},
with $\frac{1}{4}\n{Y\p t+\eta \dot{Y}\p t}^{2}$ fulfilling the role of
$\n{v_{k}}^{2}$, and $A_{k}=0$ (since there is no delay yet). Much
like the traditional analysis in the proof of Theorem \ref{thm:Main-theorem},
we can derive a linear convergence result that has a similar rate. See Section \ref{subsec:Synchronous-ODE-proof} for the proof.

%%%%%%%%%%%%%%%%%%%%%%%%%%%%%%%%%%%%%%%%
\begin{lem}
%%%%%%%%%%%%%%%%%%%%%%%%%%%%%%%%%%%%%%%%
If $Y$ satisfies \eqref{eq:Acceleration-ODE}, then the energy $E\p t$
satisfies $E'\p t\leq0$. This implies $E(t)\leq E(0)$, which is equivalent to:
\begin{align*}
f\p{Y\p t}+\frac{1}{4}\n{Y\p t+n\kappa^{1/2}\dot{Y}\p t}^{2} 
\leq &\p{f\p{Y\p 0}+\frac{1}{4}\n{Y\p 0+\eta \dot{Y}\p 0}^{2}}e^{-n^{-1}\kappa^{-1/2}t}
\end{align*}
\end{lem}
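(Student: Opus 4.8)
The plan is to differentiate $E(t)$ directly and use the ODE to eliminate $\ddot Y$. Write $\lambda \triangleq n^{-1}\kappa^{-1/2}$, so that $\eta = n\kappa^{1/2} = 1/\lambda$ and \eqref{eq:Acceleration-ODE} reads $\ddot Y = -2\lambda\dot Y - 2\lambda^{2}\nabla f(Y)$. Applying the product and chain rules to \eqref{eq:ODE-Energy} gives
\[
e^{-\lambda t}E'(t) = \lambda\Big(f(Y) + \tfrac14\n{Y+\eta\dot Y}^{2}\Big) + \dotp{\nabla f(Y),\dot Y} + \tfrac12\dotp{Y+\eta\dot Y,\ \dot Y + \eta\ddot Y}.
\]
First I would simplify $\dot Y + \eta\ddot Y$ using the ODE and the identity $\eta\lambda = 1$: it collapses to $-\dot Y - 2\lambda\nabla f(Y)$. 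Substituting this, expanding $\n{Y+\eta\dot Y}^{2}$, and again using $\eta\lambda = 1$ and $\lambda\eta^{2} = \eta$, the cross terms $\dotp{\nabla f(Y),\dot Y}$ and $\dotp{Y,\dot Y}$ all cancel, leaving the clean identity
\[
e^{-\lambda t}E'(t) = \lambda\Big(f(Y) - \dotp{\nabla f(Y),Y} + \tfrac14\n{Y}^{2}\Big) - \tfrac{\eta}{4}\n{\dot Y}^{2}.
\]

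The second step is to show the right-hand side is nonpositive. The term $-\tfrac{\eta}{4}\n{\dot Y}^{2}\le 0$ since $\eta>0$. For the bracket I would invoke $\sigma$-strong convexity of $f$ (with $\sigma=1$ after the rescaling) evaluated from $Y$ toward the minimizer, together with the normalization $x_{*}=0$, $f(x_{*})=0$: this gives $0 = f(x_{*}) \ge f(Y) - \dotp{\nabla f(Y),Y} + \tfrac12\n{Y}^{2}$, hence $f(Y) - \dotp{\nabla f(Y),Y} + \tfrac14\n{Y}^{2} \le -\tfrac14\n{Y}^{2} \le 0$. Therefore $E'(t)\le 0$ pointwise along any $C^{2}$ solution. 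Integrating on $[0,t]$ yields $E(t)\le E(0)$, and dividing through by the positive factor $e^{\lambda t}=e^{n^{-1}\kappa^{-1/2}t}$ produces exactly the displayed inequality with $\eta = n\kappa^{1/2}$.

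The computation is elementary once the ODE is used to eliminate $\ddot Y$, so the only real "obstacle" is careful bookkeeping of the $\eta\lambda=1$ substitutions; the actual content is that the coefficient $\tfrac14$ in the energy \eqref{eq:ODE-Energy} is chosen precisely so that strong convexity leaves a nonpositive remainder. In fact the identity above leaves the spare negative terms $-\tfrac14\n{Y}^{2}$ and $-\tfrac{\eta}{4}\n{\dot Y}^{2}$, which signals that a strictly faster decay rate could be extracted at the cost of a messier energy, mirroring the remark following Theorem \ref{thm:Main-theorem}. This discrete-to-continuous correspondence ($\tfrac14\n{Y+\eta\dot Y}^{2}$ playing the role of $\n{v_{k}}^{2}$, strong convexity playing the role of the per-iteration descent estimates) is also what makes this proof a useful warm-up for the proof of the main theorem.
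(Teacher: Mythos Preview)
Your proof is correct and follows essentially the same approach as the paper: differentiate $E(t)$, use the ODE to collapse $\dot Y+\eta\ddot Y$ to $-\dot Y-2\eta^{-1}\nabla f(Y)$, and invoke $1$-strong convexity at the minimizer $x_*=0$ to close the estimate. The only cosmetic difference is ordering: the paper applies strong convexity to $\eta^{-1}f(Y)$ immediately and then simplifies, whereas you first reduce to the exact identity $e^{-\lambda t}E'(t)=\lambda\bigl(f(Y)-\dotp{\nabla f(Y),Y}+\tfrac14\n{Y}^{2}\bigr)-\tfrac{\eta}{4}\n{\dot Y}^{2}$ and apply strong convexity at the end; both routes yield the same residual $-\tfrac14\eta^{-1}\n{Y}^{2}-\tfrac14\eta\n{\dot Y}^{2}\le 0$.
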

%%%%%%%%%%%%%%%%%%%%%%%%%%%%%%%%%%%%%%%%

We may also analyze an asynchronous version of \eqref{eq:Acceleration-ODE}
to motivate our proof of the main theorem in Section \ref{sec:Proof-of-main-theorem}:
\begin{align}
\ddot{Y}+2n^{-1}\kappa^{-1/2}\dot{Y}+2n^{-2}\kappa^{-1}\nabla f\p{\hat{Y}} & =0\label{eq:Acceleration-ODE-async},
\end{align}
$\hat{Y}\p t$ is a delayed version of $Y\p t$ defined similarly to \eqref{eq:def:Delayed-iterate}, with the delay bounded by $\tau$.

Unfortunately, the energy satisfies (see Section \eqref{subsec:Async-ODE-proof},
\eqref{eq:ODE-non-monotonic-energy}):
\begin{align*}
e^{-\eta^{-1}t}E'\p t & \leq-\frac{1}{8}\eta\n{\dot{Y}}^{2}+3\kappa^{2}\eta^{-1}\tau D\p t,
\text{ for }D\p t \triangleq\int_{t-\tau}^{t}\n{\dot{Y}\p s}^{2}ds.
\end{align*}
Since the right-hand side can be positive, the energy $E(t)$ may not be decreasing in general.
But, we may add a continuous-time version of the
\textbf{asynchronicity error} (see \parencite{SunHannahYin2017_asynchronousb}), much like in Definition \ref{def:Asynchronicity-error},
to create a decreasing energy. Let $c_{0}\geq0$ and $r>0$ be arbitrary constants that will be set later. Define:
\begin{align*}
A\p t & =\int_{t-\tau}^{t}c\p{t-s}\n{\dot{Y}\p s}^{2}ds,
\text{for }c\p t \triangleq c_{0}\p{e^{-rt}+\frac{e^{-r\tau}}{1-e^{-r\tau}}\p{e^{-rt}-1}}.
\end{align*}

%%%%%%%%%%%%%%%%%%%%%%%%%%%%%%%%%%%%%%%%
\begin{lem}
%%%%%%%%%%%%%%%%%%%%%%%%%%%%%%%%%%%%%%%%
When $r\tau\leq\frac{1}{2}$, the asynchronicity error $A\p t$ satisfies:
\begin{align*}
e^{-rt}\frac{d}{dt}\p{e^{rt}A\p t} & \leq c_{0}\n{\dot{Y}\p t}^{2}-\frac{1}{2}\tau^{-1}c_{0}D\p t.
\end{align*}
\end{lem}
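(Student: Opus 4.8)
The plan is to compute $\frac{d}{dt}\bigl(e^{rt}A(t)\bigr)$ directly and then extract the claimed bound by controlling the cross-term. Writing $A(t) = \int_{t-\tau}^t c(t-s)\n{\dot Y(s)}^2\,ds$, differentiation under the integral sign (Leibniz rule) gives three contributions: the boundary term at $s=t$, namely $c(0)\n{\dot Y(t)}^2 = c_0\n{\dot Y(t)}^2$ since $c(0)=c_0$; the boundary term at $s = t-\tau$, namely $-c(\tau)\n{\dot Y(t-\tau)}^2$; and the interior term $\int_{t-\tau}^t c'(t-s)\n{\dot Y(s)}^2\,ds$. So the first step is to record
\begin{align*}
\frac{d}{dt}A(t) &= c_0\n{\dot Y(t)}^2 - c(\tau)\n{\dot Y(t-\tau)}^2 + \int_{t-\tau}^t c'(t-s)\n{\dot Y(s)}^2\,ds,
\end{align*}
and then $e^{-rt}\frac{d}{dt}(e^{rt}A(t)) = rA(t) + \frac{d}{dt}A(t)$, which expands to $\int_{t-\tau}^t\bigl(r c(t-s) + c'(t-s)\bigr)\n{\dot Y(s)}^2\,ds + c_0\n{\dot Y(t)}^2 - c(\tau)\n{\dot Y(t-\tau)}^2$.

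The second step is to exploit the specific form of $c$. From $c(t) = c_0\bigl(e^{-rt} + \frac{e^{-r\tau}}{1-e^{-r\tau}}(e^{-rt}-1)\bigr) = c_0\bigl(\frac{1}{1-e^{-r\tau}}e^{-rt} - \frac{e^{-r\tau}}{1-e^{-r\tau}}\bigr)$, we get $c'(t) = -\frac{r c_0}{1-e^{-r\tau}}e^{-rt}$ and hence $rc(t) + c'(t) = -\frac{r c_0 e^{-r\tau}}{1-e^{-r\tau}} < 0$ — a negative constant, independent of $t$. This is the key structural feature: the integrand coefficient $rc(t-s)+c'(t-s)$ collapses to the constant $-\frac{rc_0e^{-r\tau}}{1-e^{-r\tau}}$. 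Also note $c(\tau) = 0$ by construction (plug $t=\tau$ into the formula), so the $s=t-\tau$ boundary term vanishes. Therefore
\begin{align*}
e^{-rt}\frac{d}{dt}(e^{rt}A(t)) &= c_0\n{\dot Y(t)}^2 - \frac{r c_0 e^{-r\tau}}{1-e^{-r\tau}}\int_{t-\tau}^t\n{\dot Y(s)}^2\,ds = c_0\n{\dot Y(t)}^2 - \frac{r c_0 e^{-r\tau}}{1-e^{-r\tau}}D(t).
\end{align*}

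The third and final step is the elementary inequality $\frac{r e^{-r\tau}}{1-e^{-r\tau}} \geq \frac{1}{2\tau}$ whenever $r\tau \leq \tfrac12$; combined with $c_0 \geq 0$ this upgrades the coefficient in front of $-D(t)$ to $\tfrac12\tau^{-1}c_0$, yielding the claim. To see the inequality, set $u = r\tau \in (0,\tfrac12]$ and check $\frac{u e^{-u}}{1-e^{-u}} \geq \tfrac12$, i.e. $2u \geq e^u - 1$; since $e^u - 1 = u + \tfrac{u^2}{2} + \cdots \leq u + \tfrac{u^2}{2}\cdot\frac{1}{1-u/3}\cdots$, a clean route is to verify $g(u) = 2u - e^u + 1$ has $g(0)=0$, $g'(u) = 2 - e^u \geq 2 - e^{1/2} > 0$ on $(0,\tfrac12]$, so $g \geq 0$ there. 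I expect the main (very mild) obstacle to be purely bookkeeping: getting the Leibniz differentiation and the algebra of $c$, $c'$, $c(\tau)=0$ exactly right so that the integrand coefficient genuinely telescopes to a constant — once that is in hand, the rest is the one-line convexity/Taylor estimate above.
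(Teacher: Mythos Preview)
Your proof is correct and follows essentially the same approach as the paper: both compute $A'(t)$ via the Leibniz rule, use $c(0)=c_0$ and $c(\tau)=0$, observe that $c'(t)+rc(t)$ collapses to the negative constant $-\frac{rc_0e^{-r\tau}}{1-e^{-r\tau}}$, and then invoke the elementary bound $\frac{e^{-x}}{1-e^{-x}}\ge\frac{1}{2x}$ for $x\le\frac12$. Your write-up is slightly more explicit in verifying that last inequality, but otherwise the arguments are identical.
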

%%%%%%%%%%%%%%%%%%%%%%%%%%%%%%%%%%%%%%%%

See Section \ref{subsec:Asynchronicity-error-lemma} for the proof. Adding this error to the Lyapunov function serves a similar purpose
in the continuous-time case as in the proof of Theorem \ref{thm:Main-theorem} (see Lemma \ref{lem:Asynchronicity-error-lemma}).
It allows us to negate $\frac{1}{2}\tau^{-1}c_{0}$ units of $D\p t$
for the cost of creating $c_{0}$ units of $\n{\dot{Y}\p t}^{2}$.

%%%%%%%%%%%%%%%%%%%%%%%%%%%%%%%%%%%%%%%%
\begin{thm}\label{thm:Async-ODE-Convergence}
%%%%%%%%%%%%%%%%%%%%%%%%%%%%%%%%%%%%%%%%
Let $c_{0}=6\kappa^{2}\eta^{-1}\tau^{2}$, and $r=\eta^{-1}$. If $\tau\leq\frac{1}{\sqrt{48}}n\kappa^{-1/2}$ then we have:
\begin{align}
e^{-\eta^{-1}t}\frac{d}{dt}\p{E\p t+e^{\eta^{-1}t}A\p t} &\leq 0.
\end{align}
Hence $f\p{Y\p{t}}$ convergence linearly to $f\p{x_*}$ with rate $\cO\p{\exp\p{-tn^{-1}\kappa^{-1/2}}}$

\end{thm}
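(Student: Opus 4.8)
The plan is to prove that the \emph{augmented} energy $t\mapsto E\p t+e^{\eta^{-1}t}A\p t$ is non-increasing; the stated linear rate for $f\p{Y\p t}$ then follows immediately by discarding the two manifestly nonnegative pieces $\tfrac14\n{Y+\eta\dot Y}^2$ and $A\p t$ and using the normalization $f\p{x_*}=0$. The two analytic inputs are already available: the non-monotone energy estimate $e^{-\eta^{-1}t}E'\p t\le-\tfrac18\eta\n{\dot Y}^2+3\kappa^2\eta^{-1}\tau D\p t$ from \eqref{eq:ODE-non-monotonic-energy}, and the asynchronicity-error lemma above, which under the hypothesis $r\tau\le\tfrac12$ gives $e^{-rt}\tfrac{d}{dt}\p{e^{rt}A\p t}\le c_0\n{\dot Y\p t}^2-\tfrac12\tau^{-1}c_0 D\p t$.

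The key idea is to synchronize the two exponential weights: take $r=\eta^{-1}$, so that $e^{-\eta^{-1}t}\tfrac{d}{dt}\p{E\p t+e^{\eta^{-1}t}A\p t}=e^{-\eta^{-1}t}E'\p t+e^{-rt}\tfrac{d}{dt}\p{e^{rt}A\p t}$. The hypothesis $\tau\le\tfrac{1}{\sqrt{48}}n\kappa^{-1/2}$ forces $r\tau=\tau\eta^{-1}\le\tfrac{1}{\sqrt{48}}\kappa^{-1}\le\tfrac{1}{\sqrt{48}}<\tfrac12$ (using $\kappa\ge1$), so the asynchronicity-error lemma applies. Adding the two bounds yields $e^{-\eta^{-1}t}\tfrac{d}{dt}\p{E+e^{\eta^{-1}t}A}\le\p{c_0-\tfrac18\eta}\n{\dot Y\p t}^2+\p{3\kappa^2\eta^{-1}\tau-\tfrac12\tau^{-1}c_0}D\p t$.

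Now the choice $c_0=6\kappa^2\eta^{-1}\tau^2$ is engineered so that the $D\p t$ coefficient vanishes exactly: $3\kappa^2\eta^{-1}\tau-\tfrac12\tau^{-1}\cdot6\kappa^2\eta^{-1}\tau^2=0$. The remaining coefficient of $\n{\dot Y\p t}^2$ is $6\kappa^2\eta^{-1}\tau^2-\tfrac18\eta$, which is $\le0$ if and only if $48\kappa^2\tau^2\le\eta^2=n^2\kappa$, i.e. $\tau\le\tfrac{1}{\sqrt{48}}n\kappa^{-1/2}$ --- precisely the standing hypothesis. Hence $\tfrac{d}{dt}\p{E\p t+e^{\eta^{-1}t}A\p t}\le0$. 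Integrating from $0$ to $t$ gives $E\p t+e^{\eta^{-1}t}A\p t\le E\p 0+A\p 0$; rewriting $c\p s=\tfrac{c_0}{1-e^{-r\tau}}\p{e^{-rs}-e^{-r\tau}}\ge0$ on $[0,\tau]$ shows $A\p t\ge0$ for all $t$, and the convention $\dot Y\equiv0$ on $[-\tau,0]$ gives $A\p 0=0$. Therefore $E\p t\le E\p 0$; substituting the definition \eqref{eq:ODE-Energy}, dropping the nonnegative quadratic, and using $f\p{x_*}=0$ yields $f\p{Y\p t}-f\p{x_*}\le E\p0\,e^{-n^{-1}\kappa^{-1/2}t}=\cO\p{e^{-tn^{-1}\kappa^{-1/2}}}$.

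The main point to appreciate is that the genuine difficulty has already been absorbed into the two preceding lemmas: \eqref{eq:ODE-non-monotonic-energy} requires differentiating $E$ along \eqref{eq:Acceleration-ODE-async} and controlling the delay term $\nabla f\p{\hat Y}-\nabla f\p Y$ by $\tau$ and the velocity history (via $L$-Lipschitzness of $\nabla f$, Jensen/Cauchy--Schwarz, and Young's inequality), while the error lemma is a direct differentiation of $A\p t$ with the special weight $c\p\cdot$. What remains here is constant bookkeeping, and the only genuine consistency check --- that the regime $r\tau\le\tfrac12$ demanded by the error lemma is compatible with the velocity-term regime $\tau\le\tfrac{1}{\sqrt{48}}n\kappa^{-1/2}$ --- is automatic thanks to $\kappa\ge1$.
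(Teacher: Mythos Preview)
Your proof is correct and follows essentially the same approach as the paper: combine the non-monotone energy bound \eqref{eq:ODE-non-monotonic-energy} with the asynchronicity-error lemma (after matching $r=\eta^{-1}$), choose $c_0$ to kill the $D(t)$ coefficient, and check that the remaining $\n{\dot Y}^2$ coefficient is nonpositive under the delay hypothesis. You supply a few details the paper elides---most notably the explicit verification that $r\tau\le\tfrac12$ holds (so the error lemma applies) and the argument that $A(t)\ge0$ and $A(0)=0$---but these are straightforward checks and do not constitute a different route.
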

%%%%%%%%%%%%%%%%%%%%%%%%%%%%%%%%%%%%%%%%

Notice how this convergence condition is similar to Corollary \ref{cor:tau-conditions}, but a little looser. \vspace{-20pt}

%%%%%%%%%%%%%%%%%%%%%%%%%%%%%%%%%%%%%%%%
\begin{proof}
%%%%%%%%%%%%%%%%%%%%%%%%%%%%%%%%%%%%%%%%
\begin{minipage}{0.9\textwidth}
\begin{align*}
 e^{-\eta^{-1}t}\frac{d}{dt}\p{E\p t+e^{\eta^{-1}t}A\p t}
 & \leq\p{c_{0}-\frac{1}{8}\eta}\n{\dot{Y}}^{2}+\p{3\kappa^{2}\eta^{-1}\tau-\frac{1}{2}\tau^{-1}c_{0}}D\p t\\ &=6\eta^{-1}\kappa^{2}\p{\tau^{2}-\frac{1}{48}n^{2}\kappa^{-1}}\n{\dot{Y}}^{2}\leq 0\qedhere
\end{align*}
\end{minipage}
\end{proof}
%%%%%%%%%%%%%%%%%%%%%%%%%%%%%%%%%%%%%%%%
\vspace{-5pt}
The preceding should act as a guide to understanding the convergence of $\algoNameS$. It may make clearer the use of the Lyapunov function to establish convergence in the synchronous case, the function of the asynchronicity error in the proof of the asynchronous case, and may hopefully elucidate the logic and general strategy of the proof in Section \ref{sec:Proof-of-main-theorem}.

%%%%%%%%%%%%%%%%%%%%%%%%%%%%%%%%%%%%%%%%
\section{Related work\label{subsec:Related-work}}
%%%%%%%%%%%%%%%%%%%%%%%%%%%%%%%%%%%%%%%%

We now discuss related work that was not addressed in Section \ref{sec:Introduction}. Nesterov
acceleration is a method for improving an algorithm's iteration complexity's
dependence the condition number
$\kappa$. Nesterov-accelerated methods have been proposed and discovered
in many settings \parencite{Nesterov1983_method,Tseng2008_accelerated,Nesterov2012_efficiency,LinLuXiao2014_accelerateda,LuXiao2014_complexity,Shalev-ShwartzZhang2016_accelerated,Allen-Zhu2017_katyusha},
including for coordinate descent algorithms (algorithms that use 1
gradient block $\nabla_{i}f$ or minimize with respect to $1$ coordinate
block per iteration), and incremental algorithms (algorithms for finite
sum problems $\frac{1}{n}\sum_{i=1}^{n}f_{i}\p x$ that use $1$ function
gradient $\nabla f_{i}\p x$ per iteration). Such algorithms can often
be augmented to solve composite minimization problems (minimization
for objective of the form $f\p x+g\p x$, especially for nonsomooth
$g$), or include constraints.

Asynchronous algorithms were proposed in \parencite{ChazanMiranker1969_chaotic}
to solve linear systems. General convergence results and theory
were developed later in \parencite{Bertsekas1983_distributed,BertsekasTsitsiklis1997_parallel,TsengBertsekasTsitsiklis1990_partially,LuoTseng1992_convergence,LuoTseng1993_convergence,Tseng1991_rate}
for partially and totally asynchronous systems, with essentially-cyclic
block sequence $i_{k}$. More recently, there has been renewed interest
in asynchronous algorithms with random block coordinate updates. Linear
and sublinear convergence results were proven for asynchronous RBCD
\parencite{LiuWright2015_asynchronous,AvronDruinskyGupta2014}, and similar
was proven for asynchronous SGD \parencite{RechtReWrightNiu2011_hogwild},
and variance reduction algorithms \parencite{J.ReddiHefnySraPoczosSmola2015_variance,LeblondPedregosaLacoste-Julien2017_asaga}.

In \parencite{PengXuYanYin2016_arock},
authors proposed and analyzed an asynchronous fixed-point
algorithm called ARock, that takes proximal algorithms, forward-backward,
ADMM, etc. as special cases. \parencite{HannahYin2017_unboundeda,SunHannahYin2017_asynchronousb,PengXuYanYin2017_convergence}
showed that many of the assumptions used in prior work (such as bounded
delay $\tau<\infty$) were unrealistic and unnecessary in general.
In \parencite{HannahYin2017_more} the authors showed that asynchronous
iterations will complete far more iterations per second, and that
a wide class of asynchronous algorithms, including asynchronous $\texttt{RBCD}$,
have the same iteration complexity as their traditional counterparts.
Hence certain asynchronous algorithms can be expected to significantly
outperform traditional ones.

In \parencite{XiaoYuLinChen2017_dscovr} authors propose a novel asynchronous
catalyst-accelerated \parencite{LinMairalHarchaoui2015_universala} primal-dual
algorithmic framework to solve regularized ERM problems. Instead of
using outdated information, they structure the parallel updates so
that the data that an update depends on is up to date (though the
rest of the data may not be). However catalyst acceleration incurs
a logarithmic penalty over Nesterov acceleration in general. Authors in \parencite{FangHuangLin2018_accelerating} skillfully devised accelerated schemes for asynchronous coordinate descent and SVRG using momentum compensation techniques. Although their complexity results have the improved $\sqrt{\kappa}$ dependence on the condition number, they do not prove an asynchronous speedup. Their complexity is $\tau$ times larger than our complexity. Since $\tau$ is necessarily greater than $p$, their results imply that adding more computing nodes will increase running time.

%%%%%%%%%%%%%%%%%%%%%%%%%%%%%%%%%%%%%%%%
\section{Numerical experiments\label{subsec:Numerical-experiments}}
%%%%%%%%%%%%%%%%%%%%%%%%%%%%%%%%%%%%%%%%

To investigate the performance of $\texttt{A2BCD}$, we solve the
ridge regression problem. Consider the following primal and corresponding dual objective (see for instance \parencite{LinLuXiao2014_accelerateda}):
\begin{align}
\min_{w\in\RR^{d}}P\p w&=\frac{1}{2n}\n{A^{T}w-l}^{2}+\frac{\lambda}{2}\n w^{2}, \\ \min_{\alpha\in\RR^{n}}D\p{\alpha}&=\frac{1}{2d^2\lambda}\n{A\alpha}^{2}+\frac{1}{2d}\n{\alpha+l}^{2}\label{eq:Ridge-primal-dual}
\end{align}
where $A\in\RR^{d\times n}$ is a matrix of $n$ samples and $d$ features, and $l$
is a label vector. We let $A=\sp{A_1,\ldots,A_m}$ where $A_i$ are the column blocks of $A$. We compare $\texttt{A2BCD}$ (which is asynchronous
accelerated), synchronous $\texttt{NU\_ACDM}$ (which is synchronous
accelerated), and asynchronous $\texttt{RBCD}$ (which is asynchronous
non-accelerated). At each iteration, each node randomly selects a
coordinate block according to \eqref{eq:Block-probability-distribution}, calculates the corresponding block
gradient, and uses it to apply an update to the shared solution vectors.
Nodes in synchronous $\texttt{NU\_ACDM}$ implementation must wait
until all nodes apply an update before they can start the next iteration,
but the asynchronous algorithms simply compute with the most up-to-date
information available. 

We use the datasets $\texttt{w1a}$ (47272 samples, 300 features) and $\texttt{rcv1\_train.binary}$  (20242 samples, 47236 features) from LIBSVM \parencite{ChangLin2011_libsvm}. The algorithm is implemented in a multi-threaded fashion using C++11 and GNU Scientific Library with a shared memory architecture. We use 40 threads on two 2.5GHz 10-core Intel Xeon E5-2670v2 processors. 

To estimate $\psi$, one can first performed a dry run with all coefficient set to $0$ to estimate $\tau$. All function parameters can be calculated exactly for this problem in terms of the data matrix and $\lambda$.  We can then use these parameters and this tau to calculate $\psi$. $\psi$ and $\tau$ merely change the parameters, and do not change execution patterns of the processors. Hence their parameter specification doesn't affect the observed delay. Through simple tuning though, we found that $\psi=0.25$ resulted in good performance.

In tuning for general problems, there are theoretical reasons why it is difficult to attain acceleration without some prior knowledge of $\sigma$, the strong convexity modulus \parencite{Arjevani2017_limitations}. Ideally $\sigma$ is pre-specified for instance in a regularization term. If the Lipschitz constants $L_i$ cannot be calculated directly (which is rarely the case for the classic dual problem of empirical risk minimization objectives), the line-search method discussed in \parencite{RouxSchmidtBach2012_stochastic} Section 4 can be used. 

A critical ingredient in the efficient implementation of $\algoNameS$ and $\texttt{NU\_ACDM}$ for this problem is the efficient update scheme discussed in \parencite{LeeSidford2013_efficient}. In linear regression applications such as this, it is essential to be able to efficiently maintain or recover $Ay$. This is because calculating block gradients requires the vector $A_i^TAy$, and without an efficient way to recover $Ay$, block gradient evaluations are essentially $50\%$ as expensive as full-gradient calculations. Unfortunately, every accelerated iteration results in dense updates to $y^k$ because of the averaging step in \eqref{eq:Algorithm-y}. Hence $Ay$ must be recalculated from scratch. 

However \parencite{LeeSidford2013_efficienta} introduces a linear transformation that allows for an equivalent iteration that results in sparse updates to new iteration variables $p$ and $q$. The original purpose of this transformation was to ensure that the averaging steps (e.g. \eqref{eq:Algorithm-y}) do not dominate the computational cost for sparse problems. However we find a more important secondary use which applies to both sparse and dense problems. Since the updates to $p$ and $q$ are sparse coordinate-block updates, the vectors $Ap$, and $Aq$ can be efficiently maintained, and therefore block gradients can be efficiently calculated. Implementation details are discussed in more detail in Appendix \ref{sec:Efficient implementation}. 

In Table \ref{tab:Error-vs-time}, we plot the sub-optimality vs. time for decreasing values of $\lambda$, which corresponds to increasingly large condition numbers $\kappa$. When $\kappa$ is small, acceleration doesn't result in a significantly better convergence rate, and hence $\texttt{A2BCD}$ and async-RBCD both outperform sync-$\texttt{NU\_ACDM}$ since they complete faster iterations at similar complexity. Acceleration for low $\kappa$ has unnecessary overhead, which means async-RBCD can be quite competitive. When $\kappa$ becomes large, async-RBCD is no longer competitive, since it has a poor convergence rate. We observe that $\texttt{A2BCD}$ and sync-$\texttt{NU\_ACDM}$ have essentially the same convergence rate, but $\texttt{A2BCD}$ is up to $2-3.5\times$ faster than sync-$\texttt{NU\_ACDM}$ because it completes much faster iterations. We observe this advantage despite the fact that we are in an ideal environment for synchronous computation: A small, homogeneous, high-bandwidth, low-latency cluster. In large-scale heterogeneous systems with greater synchronization overhead, bandwidth constraints, and latency, we expect $\texttt{A2BCD}$'s advantage to be much larger.

\begin{table}[H]\label{tab:Error-vs-time}
\begin{tabularx}{\textwidth}{XXX}
\includegraphics[width=0.3\textwidth]{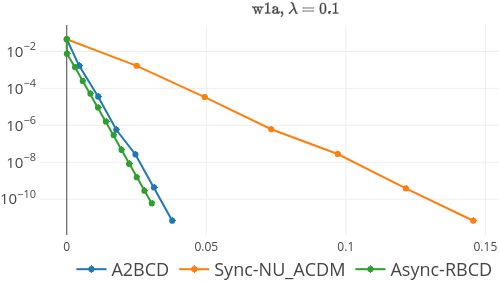}
&
\includegraphics[width=0.3\textwidth]{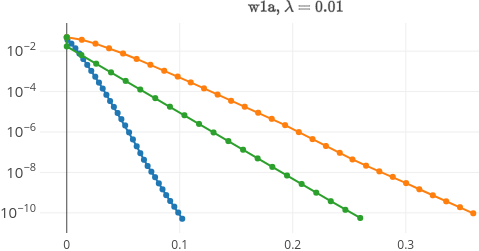}
&
\includegraphics[width=0.3\textwidth]{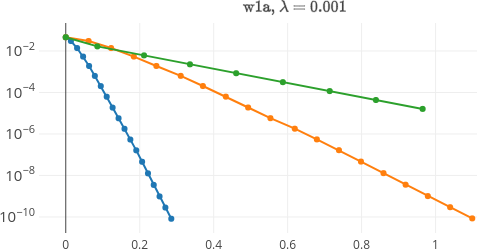}
\\

\includegraphics[width=0.3\textwidth]{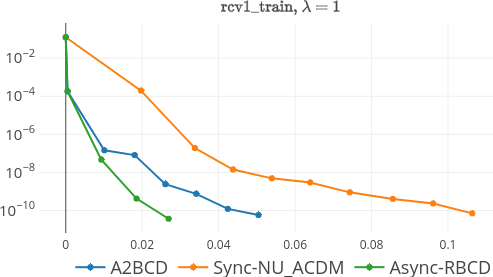}
&
\includegraphics[width=0.3\textwidth]{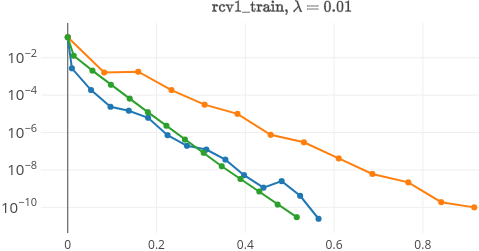}
&
\includegraphics[width=0.3\textwidth]{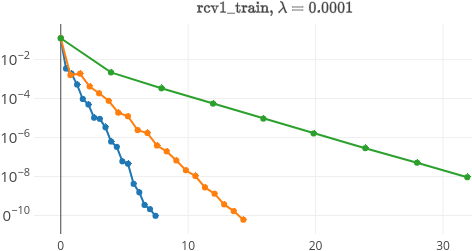}
\end{tabularx}
\caption{Sub-optimality $f(y^k)-f(x^*)$ (y-axis) vs time in seconds (x-axis) for $\texttt{A2BCD}$, synchronous $\texttt{NU\_ACDM}$, and asynchronous RBCD for data sets $\texttt{w1a}$ and $\texttt{rcv1\_train}$ for various values of $\lambda$.}
\end{table}

%%%%%%%%%%%%%%%%%%%%%%%%%%%%%%%%%%%%%%%%
\pdfbookmark[1]{References}{References}
\printbibliography
%%%%%%%%%%%%%%%%%%%%%%%%%%%%%%%%%%%%%%%%

\newpage

%%%%%%%%%%%%%%%%%%%%%%%%%%%%%%%%%%%%%%%%
\appendix
%%%%%%%%%%%%%%%%%%%%%%%%%%%%%%%%%%%%%%%%

%%%%%%%%%%%%%%%%%%%%%%%%%%%%%%%%%%%%%%%%
\section{Proof of the main result\label{sec:Proof-of-main-theorem}}
%%%%%%%%%%%%%%%%%%%%%%%%%%%%%%%%%%%%%%%%
We first recall a couple of inequalities for convex functions.

%%%%%%%%%%%%%%%%%%%%%%%%%%%%%%%%%%%%%%%%
\begin{lem}
%%%%%%%%%%%%%%%%%%%%%%%%%%%%%%%%%%%%%%%%
\textup{Let $f$ be $\sigma$-strongly convex with $L$-Lipschitz gradient. Then we have:
\begin{align}
f\p y & \leq f\p x+\dotp{y-x,\nabla f\p x}+\frac{1}{2}L\n{y-x}^{2},\,\forall x,y\label{eq:Lipschitz-convex-bound}\\
f\p y & \geq f\p x+\dotp{y-x,\nabla f\p x}+\frac{1}{2}\sigma\n{y-x}^{2},\,\forall x,y\label{eq:Strongly-convex-bound}
\end{align}
}
\end{lem}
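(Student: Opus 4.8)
The plan is to obtain both bounds from the fundamental theorem of calculus along the segment joining $x$ and $y$, together with the two hypotheses on $f$. For the upper bound \eqref{eq:Lipschitz-convex-bound}, set $\phi\p t=f\p{x+t\p{y-x}}$ for $t\in\sp{0,1}$, so that $\phi'\p t=\dotp{\nabla f\p{x+t\p{y-x}},y-x}$ and $f\p y-f\p x=\int_{0}^{1}\phi'\p t\,dt$. Subtracting $\dotp{\nabla f\p x,y-x}=\int_{0}^{1}\dotp{\nabla f\p x,y-x}\,dt$ gives
\begin{align*}
f\p y-f\p x-\dotp{\nabla f\p x,y-x}=\int_{0}^{1}\dotp{\nabla f\p{x+t\p{y-x}}-\nabla f\p x,y-x}\,dt.
\end{align*}
Applying Cauchy--Schwarz and then $L$-Lipschitzness of $\nabla f$, the integrand is at most $\n{\nabla f\p{x+t\p{y-x}}-\nabla f\p x}\n{y-x}\leq Lt\n{y-x}^{2}$, and integrating $\int_{0}^{1}Lt\,dt=\tfrac12 L$ yields \eqref{eq:Lipschitz-convex-bound}.

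For the lower bound \eqref{eq:Strongly-convex-bound}, I would use the standard fact that $\sigma$-strong convexity of $f$ is equivalent to convexity of $g\p x\triangleq f\p x-\tfrac{\sigma}{2}\n x^{2}$ (which is exactly the definition of strong convexity adopted here). Convexity of the differentiable function $g$ gives the first-order inequality $g\p y\geq g\p x+\dotp{\nabla g\p x,y-x}$; substituting $\nabla g\p x=\nabla f\p x-\sigma x$ and rearranging produces
\begin{align*}
f\p y\geq f\p x+\dotp{\nabla f\p x,y-x}+\tfrac{\sigma}{2}\p{\n y^{2}-\n x^{2}-2\dotp{x,y-x}},
\end{align*}
and since $\n y^{2}-\n x^{2}-2\dotp{x,y-x}=\n{y-x}^{2}$ this is precisely \eqref{eq:Strongly-convex-bound}. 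Alternatively, one can run the same integral argument as above using the monotonicity estimate $\dotp{\nabla f\p u-\nabla f\p v,u-v}\geq\sigma\n{u-v}^{2}$, which follows from strong convexity.

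There is essentially no real obstacle here: this is a classical lemma (see, e.g., \parencite{Nesterov2013_introductory}) and the only point requiring any care is invoking the correct characterization of strong convexity consistent with the paper's standing assumptions, and checking the elementary identity $\n y^{2}-\n x^{2}-2\dotp{x,y-x}=\n{y-x}^{2}$. I would state both inequalities with a one-paragraph proof and move on, since they are used only as routine ingredients in the main convergence analysis.
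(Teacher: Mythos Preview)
Your proof is correct and entirely standard. The paper itself does not prove this lemma at all: it is introduced with the sentence ``We first recall a couple of inequalities for convex functions'' and then simply stated without argument, since both bounds are textbook facts (indeed the paper later cites \parencite{Nesterov2013_introductory} for related material). So your write-up is strictly more than what the paper provides; the integral argument for \eqref{eq:Lipschitz-convex-bound} and the reduction to convexity of $f-\tfrac{\sigma}{2}\n{\cdot}^2$ for \eqref{eq:Strongly-convex-bound} are exactly the classical proofs one would expect, and there is nothing to correct.
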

%%%%%%%%%%%%%%%%%%%%%%%%%%%%%%%%%%%%%%%%

We also find it convenient to define the norm:
\begin{align}
\n s_{*} & =\sqrt{\sum_{i=1}^{n}L_{i}^{-1/2}\n{s_{i}}^{2}}\label{eq:def:Star-norm}
\end{align}

%%%%%%%%%%%%%%%%%%%%%%%%%%%%%%%%%%%%%%%%
\subsection{Starting point}
%%%%%%%%%%%%%%%%%%%%%%%%%%%%%%%%%%%%%%%%
First notice that using the definition \eqref{eq:Algorithm-v} of $v_{k+1}$ we have:
\begin{align}
\n{v_{k+1}}^{2} & =\n{\beta v_{k}+\p{1-\beta}y_{k}}^{2}-2\sigma^{-1/2}L_{i_{k}}^{-1/2}\dotp{\beta v_{k}+\p{1-\beta}y_{k},\nabla_{i^{k}}f\p{\hat{y}_{k}}}+\sigma^{-1}L_{i_{k}}^{-1}\n{\nabla_{i^{k}}f\p{\hat{y}_{k}}}^{2}\nonumber \\
\EE_{k}\n{v_{k+1}}^{2} & =\underbrace{\n{\beta v_{k}+\p{1-\beta}y_{k}}^{2}}_{A}-2\sigma^{-1/2}S^{-1}\underbrace{\dotp{\beta v_{k}+\p{1-\beta}y_{k},\nabla f\p{\hat{y}_{k}}}}_{B}\label{eq:Ek-vkp1}\\
 & +S^{-1}\sigma^{-1}\underbrace{\sum_{i=1}^{n}L_{i}^{-1/2}\n{\nabla_{i}f\p{\hat{y}_{k}}}^{2}}_{C}\nonumber 
\end{align}

We have the following general identity:
\begin{align}
\n{\beta x+\p{1-\beta}y}^{2} & =\beta\n x^{2}+\p{1-\beta}\n y^{2}-\beta\p{1-\beta}\n{x-y}^{2},\,\forall x,y\label{eq:Combettes-inequality}
\end{align}
It can also easily be verified from \eqref{eq:Algorithm-y} that we have:
\begin{align}
v_{k} & =y_{k}+\alpha^{-1}\p{1-\alpha}\p{y_{k}-x_{k}}\label{eq:vk-in-terms-xk-yk}
\end{align}
Using \eqref{eq:Combettes-inequality} on term $A$, \eqref{eq:vk-in-terms-xk-yk} on term $B$, and recalling the definition \eqref{eq:def:Star-norm} on term $C$, we have from \eqref{eq:Ek-vkp1}:
\begin{align}
\EE_{k}\n{v_{k+1}}^{2} & =\beta\n{v_{k}}^{2}+\p{1-\beta}\n{y_{k}}^{2}-\beta\p{1-\beta}\n{v_{k}-y_{k}}^{2}+S^{-1}\sigma^{-1/2}\n{\nabla f\p{\hat{y}_{k}}}_{*}^{2}\label{eq:Starting-point}\\
 & -2\sigma^{-1/2}S^{-1}\beta\alpha^{-1}\p{1-\alpha}\dotp{y_{k}-x_{k},\nabla f\p{\hat{y}_{k}}}-2\sigma^{-1/2}S^{-1}\dotp{y_{k},\nabla f\p{\hat{y}_{k}}}\nonumber 
\end{align}

This inequality is our starting point. We analyze the terms on the
second line in the next section.

%%%%%%%%%%%%%%%%%%%%%%%%%%%%%%%%%%%%%%%%
\subsection{The Cross Term}
%%%%%%%%%%%%%%%%%%%%%%%%%%%%%%%%%%%%%%%%
To analyze these terms, we need a small lemma. This lemma is fundamental
in allowing us to deal with asynchronicity.

%%%%%%%%%%%%%%%%%%%%%%%%%%%%%%%%%%%%%%%%
\begin{lem}\label{lem:Async-difference-lemma}
%%%%%%%%%%%%%%%%%%%%%%%%%%%%%%%%%%%%%%%%
Let $\chi,A>0$. Let the delay be bounded by $\tau$. Then:
\begin{align*}
A\n{\hat{y}_{k}-y_{k}} & \leq\frac{1}{2}\chi^{-1}A^{2}+\frac{1}{2}\chi\tau\sum_{j=1}^{\tau}\n{y_{k+1-j}-y_{k-j}}^{2}
\end{align*}

\end{lem}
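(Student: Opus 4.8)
The plan is to combine a Young-type inequality with a telescoping estimate that converts the single delayed difference $\n{\hat{y}_{k}-y_{k}}$ into a weighted sum of consecutive differences $\n{y_{k+1-j}-y_{k-j}}$. First I would apply Young's inequality $ab\le\tfrac12\chi^{-1}a^{2}+\tfrac12\chi b^{2}$ with $a=A$ and $b=\n{\hat{y}_{k}-y_{k}}$, which reduces the claim to establishing the purely geometric bound $\n{\hat{y}_{k}-y_{k}}^{2}\le\tau\sum_{j=1}^{\tau}\n{y_{k+1-j}-y_{k-j}}^{2}$.

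For that bound I would use the definition \eqref{eq:def:Delayed-iterate} of the delayed (inconsistently read) iterate: the $i$th coordinate block of $\hat{y}_{k}-y_{k}$ is the $i$th block of $y_{(k-j(k,i))}-y_{k}$, which telescopes as $-\sum_{l=1}^{j(k,i)}P_{i}\p{y_{k-l+1}-y_{k-l}}$. Since the coordinate blocks are orthogonal, the squared norm of $\hat{y}_{k}-y_{k}$ splits as a sum over $i$ of the squared block norms, and to each inner sum of at most $j(k,i)\le\tau$ terms I would apply Cauchy--Schwarz (equivalently Jensen), picking up a factor $j(k,i)\le\tau$. Extending the index range from $j(k,i)$ up to $\tau$ only adds nonnegative terms, so I get $\n{P_{i}\p{\hat{y}_{k}-y_{k}}}^{2}\le\tau\sum_{l=1}^{\tau}\n{P_{i}\p{y_{k-l+1}-y_{k-l}}}^{2}$ for each $i$. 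Summing over $i=1,\dots,n$ and swapping the order of the two summations recovers exactly $\tau\sum_{j=1}^{\tau}\n{y_{k+1-j}-y_{k-j}}^{2}$, and plugging this into the Young step finishes the proof.

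There is no serious obstacle here; the only point requiring a little care is the bookkeeping for the inconsistent read, namely that different blocks $i$ are delayed by different amounts $j(k,i)$. This is handled cleanly because the block decomposition makes the squared norm additive over blocks, so each block is treated independently by the telescoping/Cauchy--Schwarz step, and replacing the block-dependent length $j(k,i)$ by the uniform bound $\tau$ is precisely what makes the final estimate uniform in $k$ and independent of the particular delay pattern.
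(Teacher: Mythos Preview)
Your proposal is correct and is exactly the standard argument: Young's inequality followed by the telescoping/Cauchy--Schwarz bound $\n{\hat{y}_{k}-y_{k}}^{2}\le\tau\sum_{j=1}^{\tau}\n{y_{k+1-j}-y_{k-j}}^{2}$, with the inconsistent-read handled blockwise. The paper itself does not give a proof here but simply cites \parencite{HannahYin2017_more}; your argument is precisely the one that citation points to.
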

%%%%%%%%%%%%%%%%%%%%%%%%%%%%%%%%%%%%%%%%

%%%%%%%%%%%%%%%%%%%%%%%%%%%%%%%%%%%%%%%%
\begin{proof}
%%%%%%%%%%%%%%%%%%%%%%%%%%%%%%%%%%%%%%%%
See \parencite{HannahYin2017_more}.
\end{proof}
%%%%%%%%%%%%%%%%%%%%%%%%%%%%%%%%%%%%%%%%

%%%%%%%%%%%%%%%%%%%%%%%%%%%%%%%%%%%%%%%%
\begin{lem}\label{lem:Cross-term-lemma}
%%%%%%%%%%%%%%%%%%%%%%%%%%%%%%%%%%%%%%%%
We have:
\begin{align}
-\dotp{\nabla f\p{\hat{y}_{k}},y_{k}} & \leq-f\p{y_{k}}-\frac{1}{2}\sigma\p{1-\psi}\n{y_{k}}^{2}+\boldsymbol{\frac{1}{2}L\kappa\psi^{-1}\tau\sum_{j=1}^{\tau}\n{y_{k+1-j}-y_{k-j}}^{2}}\label{eq:Cross-term-lemma-1}\\
\dotp{\nabla f\p{\hat{y}_{k}},x_{k}-y_{k}} & \leq f\p{x_{k}}-f\p{y_{k}}\label{eq:Cross-term-lemma-2}\\
 & +\boldsymbol{\frac{1}{2}L\alpha\p{1-\alpha}^{-1}\p{\kappa^{-1}\psi\beta\n{v_{k}-y_{k}}^{2}+\kappa\psi^{-1}\beta^{-1}\tau\sum_{j=1}^{\tau}\n{y_{k+1-j}-y_{k-j}}^{2}}}\nonumber 
\end{align}

\end{lem}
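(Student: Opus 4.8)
The plan is to treat each of the two inner products by the same two‑step recipe: split off the ``exact'' gradient $\nabla f\p{y_k}$ from the asynchronous perturbation $\nabla f\p{\hat y_k}-\nabla f\p{y_k}$; bound the exact part using (strong) convexity of $f$, and bound the perturbation part using $L$‑Lipschitzness of $\nabla f$, Cauchy--Schwarz, and \cref{lem:Async-difference-lemma} with a carefully chosen splitting parameter $\chi$.

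For \eqref{eq:Cross-term-lemma-1}, write $-\dotp{\nabla f\p{\hat y_k},y_k}=-\dotp{\nabla f\p{y_k},y_k}-\dotp{\nabla f\p{\hat y_k}-\nabla f\p{y_k},y_k}$. Applying the strong convexity bound \eqref{eq:Strongly-convex-bound} at $x=y_k$, $y=x_*=0$ and using $f\p{x_*}=0$ gives $-\dotp{\nabla f\p{y_k},y_k}\le-f\p{y_k}-\frac{1}{2}\sigma\n{y_k}^2$. For the perturbation term, Cauchy--Schwarz together with the $L$‑Lipschitzness of $\nabla f$ give $-\dotp{\nabla f\p{\hat y_k}-\nabla f\p{y_k},y_k}\le L\n{\hat y_k-y_k}\,\n{y_k}$, and then \cref{lem:Async-difference-lemma} with $A=L\n{y_k}$ and $\chi=L\kappa\psi^{-1}$ bounds this by $\frac{1}{2}L\kappa^{-1}\psi\n{y_k}^2+\frac{1}{2}L\kappa\psi^{-1}\tau\sum_{j=1}^{\tau}\n{y_{k+1-j}-y_{k-j}}^2$. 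Since $L\kappa^{-1}=\sigma$, the two $\n{y_k}^2$ contributions combine into $-\frac{1}{2}\sigma\p{1-\psi}\n{y_k}^2$, which gives exactly \eqref{eq:Cross-term-lemma-1}.

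For \eqref{eq:Cross-term-lemma-2}, split in the same way: $\dotp{\nabla f\p{\hat y_k},x_k-y_k}=\dotp{\nabla f\p{y_k},x_k-y_k}+\dotp{\nabla f\p{\hat y_k}-\nabla f\p{y_k},x_k-y_k}$. Plain convexity (that is, \eqref{eq:Strongly-convex-bound} with the quadratic term discarded) applied at $x=y_k$, $y=x_k$ gives $\dotp{\nabla f\p{y_k},x_k-y_k}\le f\p{x_k}-f\p{y_k}$. For the perturbation term, Cauchy--Schwarz and $L$‑Lipschitzness give the bound $L\n{\hat y_k-y_k}\,\n{x_k-y_k}$; apply \cref{lem:Async-difference-lemma} with $A=L\n{x_k-y_k}$ and $\chi=L\kappa\alpha\p{\psi\beta\p{1-\alpha}}^{-1}$, and finally substitute $\n{x_k-y_k}=\alpha\p{1-\alpha}^{-1}\n{v_k-y_k}$, which follows from \eqref{eq:vk-in-terms-xk-yk}. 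Collecting terms reproduces precisely the right‑hand side of \eqref{eq:Cross-term-lemma-2}.

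There is no deep obstacle here; the real content is bookkeeping. The only ``design'' choices are the two splitting parameters $\chi$, which are reverse‑engineered so that the coefficient of $\n{y_k}^2$ (respectively $\n{v_k-y_k}^2$) produced by the Young‑type inequality matches the $\psi$‑proportional term prescribed in the statement; after that one simply checks that everything collapses using $\kappa=L/\sigma$. The one point deserving a little care is that $\hat y_k$ is an \emph{inconsistent} (block‑wise) delayed read, so $\n{\hat y_k-y_k}$ is not literally $\n{y_{k-j}-y_k}$ for a single $j$ — this is exactly the discrepancy that \cref{lem:Async-difference-lemma} is built to absorb, which is why we invoke it rather than an elementary Young's inequality.
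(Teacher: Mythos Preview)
Your proposal is correct and follows essentially the same approach as the paper: the same splitting into exact-gradient and perturbation parts, the same use of strong convexity/convexity for the exact part, and the same application of \cref{lem:Async-difference-lemma} together with the identity $y_k-x_k=\alpha(1-\alpha)^{-1}(v_k-y_k)$ for the perturbation. The only cosmetic difference is that the paper applies \cref{lem:Async-difference-lemma} with $A=\n{y_k}$ (resp.\ $A=\n{x_k-y_k}$) and $\chi=\kappa\psi^{-1}$ (resp.\ $\chi=\kappa\psi^{-1}\beta^{-1}\alpha(1-\alpha)^{-1}$) and then multiplies through by $L$, whereas you absorb the factor $L$ into $A$ and $\chi$ from the start; the arithmetic is identical.
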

%%%%%%%%%%%%%%%%%%%%%%%%%%%%%%%%%%%%%%%%

The terms in bold in \eqref{eq:Cross-term-lemma-1} and \eqref{eq:Cross-term-lemma-2} are a result of the asynchronicity, and are identically $0$ in its absence.

%%%%%%%%%%%%%%%%%%%%%%%%%%%%%%%%%%%%%%%%
\begin{proof}
%%%%%%%%%%%%%%%%%%%%%%%%%%%%%%%%%%%%%%%%
Our strategy is to separately analyze terms that appear in the traditional analysis of \parencite{Nesterov2012_efficiency}, and the terms that result from asynchronicity. We first prove \eqref{eq:Cross-term-lemma-1}:
\begin{align}
-\dotp{\nabla f\p{\hat{y}_{k}},y_{k}} & =-\dotp{\nabla f\p{y_{k}},y_{k}}-\dotp{\nabla f\p{\hat{y}_{k}}-\nabla f\p{y_{k}},y_{k}}\nonumber \\
 & \leq-f\p{y_{k}}-\frac{1}{2}\sigma\n{y_{k}}^{2}+L\n{\hat{y}_{k}-y_{k}}\n{y_{k}}\label{eq:Cross-term-proof-1-SC}
\end{align}
\eqref{eq:Cross-term-proof-1-SC} follows from strong convexity  (\eqref{eq:Strongly-convex-bound} with $x=y_{k}$ and $y=x_{*}$), and the fact that $\nabla f$ is $L$-Lipschitz. The term due to asynchronicity becomes:
\begin{align*}
L\n{\hat{y}_{k}-y_{k}}\n{y_{k}} & \leq\frac{1}{2}L\kappa^{-1}\psi\n{y_{k}}^{2}+\frac{1}{2}L\kappa\psi^{-1}\tau\sum_{j=1}^{\tau}\n{y_{k+1-j}-y_{k-j}}^{2}
\end{align*}
using Lemma \ref{lem:Async-difference-lemma} with $\chi=\kappa\psi^{-1}, A=\n{y_k}$. Combining this with \eqref{eq:Cross-term-proof-1-SC} completes the proof of \eqref{eq:Cross-term-lemma-1}.

We now prove \eqref{eq:Cross-term-lemma-2}:
\begin{align*}
&\dotp{\nabla f\p{\hat{y}_{k}},x_{k}-y_{k}}\\
& =\dotp{\nabla f\p{y_{k}},x_{k}-y_{k}}+\dotp{\nabla f\p{\hat{y}_{k}}-\nabla f\p{y_{k}},x_{k}-y_{k}}\\
 & \leq f\p{x_{k}}-f\p{y_{k}}+L\n{\hat{y}_{k}-y_{k}}\n{x_{k}-y_{k}}\\
 & \leq f\p{x_{k}}-f\p{y_{k}}\\
 & +\frac{1}{2}L\p{\kappa^{-1}\psi\beta\alpha^{-1}\p{1-\alpha}\n{x_{k}-y_{k}}^{2}+\kappa\psi^{-1}\beta^{-1}\alpha\p{1-\alpha}^{-1}\tau\sum_{j=1}^{\tau}\n{y_{k+1-j}-y_{k-j}}^{2}}
\end{align*}
Here the last line follows from Lemma \ref{lem:Async-difference-lemma} with $\chi=\kappa\psi^{-1}\beta^{-1}\alpha\p{1-\alpha}^{-1}$, $A=n{x_{k}-y_{k}}$. We can complete the proof using the following identity that can be easily obtained from \eqref{eq:Algorithm-y}: 
\begin{align*}
y_{k}-x_{k} & =\alpha\p{1-\alpha}^{-1}\p{v_{k}-y_{k}}\qedhere
\end{align*}
\end{proof}
%%%%%%%%%%%%%%%%%%%%%%%%%%%%%%%%%%%%%%%%

%%%%%%%%%%%%%%%%%%%%%%%%%%%%%%%%%%%%%%%%
\subsection{Function-value term}

Much like \parencite{Nesterov2012_efficiency}, we need a $f\p{x^{k}}$ term in the Lyapunov function (see the middle of page $357$). However we additionally need to consider asynchronicity when analyzing the growth of this term. Again terms due to asynchronicity are emboldened.

%%%%%%%%%%%%%%%%%%%%%%%%%%%%%%%%%%%%%%%%
\begin{lem}\label{lem:Function-value-lemma}
%%%%%%%%%%%%%%%%%%%%%%%%%%%%%%%%%%%%%%%%
We have:
\begin{align*}
\EE_{k}f\p{x_{k+1}} & \leq f\p{y_{k}}-\frac{1}{2}h\p{2-h\p{1+\boldsymbol{\frac{1}{2}\sigma^{1/2}\underbar{L}^{-1/2}\psi}}}S^{-1}\n{\nabla f\p{\hat{y}_{k}}}_{*}^{2}\\
 & +\boldsymbol{S^{-1}L\sigma^{1/2}\kappa\psi^{-1}\tau\sum_{j=1}^{\tau}\n{y_{k+1-j}-y_{k-j}}^{2}}
\end{align*}

\end{lem}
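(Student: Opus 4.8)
# Proof Proposal for Lemma (Function-value term)

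The plan is to start from the update rule \eqref{eq:Algorithm-x} for $x_{k+1}$ and apply the descent lemma \eqref{eq:Lipschitz-convex-bound} at the point $y_k$, using that $x_{k+1} - y_k = -h L_{i_k}^{-1}\nabla_{i_k}f(\hat{y}_k)$. This gives
\begin{align*}
f(x_{k+1}) &\leq f(y_k) - h L_{i_k}^{-1}\dotp{\nabla_{i_k}f(y_k),\nabla_{i_k}f(\hat{y}_k)} + \tfrac{1}{2}L h^2 L_{i_k}^{-2}\n{\nabla_{i_k}f(\hat{y}_k)}^2.
\end{align*}
Wait---the Lipschitz bound should be used with the \emph{block} Lipschitz constant $L_{i_k}$ rather than $L$ in the quadratic term, since only the $i_k$ block moves; that is the key to getting $S^{-1}\n{\cdot}_*^2$ after taking expectations. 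So the third term is really $\tfrac{1}{2}h^2 L_{i_k}^{-1}\n{\nabla_{i_k}f(\hat{y}_k)}^2$. The first-order term is the one that must absorb the asynchronicity: write $\nabla_{i_k}f(y_k) = \nabla_{i_k}f(\hat{y}_k) + (\nabla_{i_k}f(y_k) - \nabla_{i_k}f(\hat{y}_k))$, so that $-\dotp{\nabla_{i_k}f(y_k),\nabla_{i_k}f(\hat{y}_k)} = -\n{\nabla_{i_k}f(\hat{y}_k)}^2 + \dotp{\nabla_{i_k}f(\hat{y}_k)-\nabla_{i_k}f(y_k),\nabla_{i_k}f(\hat{y}_k)}$, and bound the cross term by $\n{\nabla_{i_k}f(\hat{y}_k)-\nabla_{i_k}f(y_k)}\n{\nabla_{i_k}f(\hat{y}_k)} \leq L_{i_k}\n{\hat{y}_k - y_k}\n{\nabla_{i_k}f(\hat{y}_k)}$ using block smoothness \eqref{eq:def:Coordinate-smoothness}.

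Next I would take the conditional expectation $\EE_k[\cdot]$ over $i_k$ with the distribution \eqref{eq:Block-probability-distribution}. The clean terms give $f(y_k) - hS^{-1}\n{\nabla f(\hat{y}_k)}_*^2 + \tfrac{1}{2}h^2 S^{-1}\n{\nabla f(\hat{y}_k)}_*^2$, which combine to $-\tfrac{1}{2}h(2-h)S^{-1}\n{\nabla f(\hat{y}_k)}_*^2$. For the asynchronous cross term, $\EE_k$ produces $hS^{-1}\sum_i L_i^{1/2}L_i^{-1}\cdot L_i\n{\hat{y}_k-y_k}\n{\nabla_i f(\hat{y}_k)}$ times appropriate factors---actually one must be careful: $\EE_k[L_{i_k}^{-1}\cdot L_{i_k}\n{\hat y_k - y_k}\n{\nabla_{i_k}f(\hat y_k)}] = S^{-1}\sum_i L_i^{1/2}\n{\hat y_k - y_k}\n{\nabla_i f(\hat y_k)}$, and then I would apply Cauchy--Schwarz / Young's inequality coordinate-wise with a weight chosen to split this into a piece proportional to $\n{\nabla f(\hat y_k)}_*^2$ (which gets reabsorbed into the $-\tfrac12 h(2-h)S^{-1}\n{\cdot}_*^2$ term, explaining the extra $\tfrac{1}{2}\sigma^{1/2}\underbar{L}^{-1/2}\psi$ correction inside the parenthesis) and a piece proportional to $\n{\hat{y}_k-y_k}^2$.

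Finally, the $\n{\hat{y}_k-y_k}^2$ piece is converted into the stated $\sum_{j=1}^{\tau}\n{y_{k+1-j}-y_{k-j}}^2$ tail via Lemma \ref{lem:Async-difference-lemma} (or rather, since we already have a squared quantity, by directly using $\n{\hat y_k - y_k}^2 \leq \tau\sum_{j=1}^\tau \n{y_{k+1-j}-y_{k-j}}^2$, which is the squared-form Cauchy--Schwarz estimate underlying that lemma). The bookkeeping must be arranged so that the constant in front of the tail sum comes out as exactly $S^{-1}L\sigma^{1/2}\kappa\psi^{-1}\tau$; this requires choosing the Young's-inequality weight in the previous step to be proportional to $\psi$ (matching the pattern in Lemma \ref{lem:Cross-term-lemma}), so that the $\n{\cdot}_*^2$-piece contributes a factor $\psi$ and the tail-piece contributes $\psi^{-1}$. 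The main obstacle is precisely this constant-chasing: getting the $\underbar{L}^{-1/2}$ versus general $L_i^{-1/2}$ distinction right when bounding $\sum_i L_i^{1/2}\n{\nabla_i f(\hat y_k)}\cdot(\text{scalar})$ against $\n{\nabla f(\hat y_k)}_*^2 = \sum_i L_i^{-1/2}\n{\nabla_i f(\hat y_k)}^2$, since the mismatch of powers of $L_i$ forces the worst-case $\underbar{L}$ to appear. Everything else is a routine application of the descent lemma, block smoothness, and Young's inequality.
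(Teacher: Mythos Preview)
Your outline is close in spirit to the paper's proof, but there are two genuine gaps in the treatment of the asynchronous cross term.

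First, the step ``$\n{\nabla_{i_k}f(\hat{y}_k)-\nabla_{i_k}f(y_k)}\leq L_{i_k}\n{\hat{y}_k-y_k}$ using block smoothness \eqref{eq:def:Coordinate-smoothness}'' is invalid. The block-smoothness hypothesis only says that $\nabla_i f$ is $L_i$-Lipschitz with respect to changes \emph{in the $i$th block}, i.e.\ $\n{\nabla_i f(x+P_i h)-\nabla_i f(x)}\leq L_i\n{h}$. Since $\hat{y}_k$ and $y_k$ differ in \emph{all} blocks (the delay is across all coordinates), you cannot invoke $L_{i_k}$ here; at best you get $\n{\nabla_{i_k}f(\hat{y}_k)-\nabla_{i_k}f(y_k)}\leq L\n{\hat{y}_k-y_k}$ from the global Lipschitz constant.

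Second, even after correcting to the global $L$, your order of operations---bounding $\n{\nabla_{i_k}f(y_k)-\nabla_{i_k}f(\hat{y}_k)}$ per realization and \emph{then} taking $\EE_k$---is too lossy to reach the stated constant. You end up with $hS^{-1}L\n{\hat{y}_k-y_k}\sum_i L_i^{-1/2}\n{\nabla_i f(\hat{y}_k)}$, and converting this $\ell_1$-type sum to $\n{\nabla f(\hat{y}_k)}_*$ via Cauchy--Schwarz costs a factor $(\sum_i L_i^{-1/2})^{1/2}$, which is of order $n^{1/2}\underbar{L}^{-1/4}$ rather than the paper's $\underbar{L}^{-1/4}$. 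This extra $n^{1/2}$ propagates and breaks the master-inequality balance. The paper instead takes $\EE_k$ \emph{first}, obtaining the inner-product sum $-\sum_i L_i^{-1/2}\dotp{\nabla_i f(y_k)-\nabla_i f(\hat{y}_k),\nabla_i f(\hat{y}_k)}$, and applies a \emph{single} weighted Cauchy--Schwarz in the $L_i^{-1/4}$-scaled space to get $\bigl(\sum_i L_i^{-1/2}\n{\nabla_i f(y_k)-\nabla_i f(\hat{y}_k)}^2\bigr)^{1/2}\n{\nabla f(\hat{y}_k)}_*$. Only then is the first factor bounded by $\underbar{L}^{-1/4}\n{\nabla f(y_k)-\nabla f(\hat{y}_k)}\leq \underbar{L}^{-1/4}L\n{y_k-\hat{y}_k}$, using the global Lipschitz property \emph{once} on the reassembled full gradient. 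After that, Lemma~\ref{lem:Async-difference-lemma} with $\chi=2h^{-1}\sigma^{1/2}\underbar{L}^{1/4}\kappa\psi^{-1}$ gives exactly the two terms in the statement. The rest of your outline (descent lemma with block constant $L_{i_k}$, expectation producing $S^{-1}\n{\cdot}_*^2$, Young-type split, telescoping $\n{\hat{y}_k-y_k}$) is correct.
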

%%%%%%%%%%%%%%%%%%%%%%%%%%%%%%%%%%%%%%%%

%%%%%%%%%%%%%%%%%%%%%%%%%%%%%%%%%%%%%%%%
\begin{proof}
From the definition
\eqref{eq:Algorithm-x} of $x_{k+1}$, we can see that $x_{k+1}-y_{k}$ is supported on block $i_k$. Since each gradient block $\nabla_i f$ is $L_i$ Lipschitz with respect to changes to block $i$, we can use \eqref{eq:Lipschitz-convex-bound} to obtain:
\begin{align}
f\p{x_{k+1}} & \leq f\p{y_{k}}+\dotp{\nabla f\p{y_{k}},x_{k+1}-y_{k}}+\frac{1}{2}L_{i_k}\n{x_{k+1}-y_{k}}^{2}\nonumber \\
 \text{(from \eqref{eq:Algorithm-x})}& =f\p{y_{k}}-hL_{i_{k}}^{-1}\dotp{\nabla_{i_{k}}f\p{y_{k}},\nabla_{i_{k}}f\p{\hat{y}_{k}}}+\frac{1}{2}h^{2}L_{i_{k}}^{-1}\n{\nabla_{i_{k}}f\p{\hat{y}_{k}}}^{2}\nonumber \\
 & =f\p{y_{k}}-hL_{i_{k}}^{-1}\dotp{\nabla_{i_{k}}f\p{y_{k}}-\nabla_{i_{k}}f\p{\hat{y}_{k}},\nabla_{i^{k}}f\p{\hat{y}_{k}}}-\frac{1}{2}h\p{2-h}L_{i_{k}}^{-1}\n{\nabla_{i_{k}}f\p{\hat{y}_{k}}}^{2}\nonumber \\
\EE_{k}f\p{x_{k+1}} & \leq f\p{y_{k}}-hS^{-1}\sum_{i=1}^{n}L_{i}^{-1/2}\dotp{\nabla_{i}f\p{y_{k}}-\nabla_{i}f\p{\hat{y}_{k}},\nabla_{i}f\p{\hat{y}_{k}}}\label{eq:fn-value-proof-Efkp1}\\
&-\frac{1}{2}h\p{2-h}S^{-1}\n{\nabla f\p{\hat{y}_{k}}}_{*}^{2}\nonumber
\end{align}
Here the last line followed from the definition \eqref{eq:def:Star-norm} of the norm $\n{\cdot}_{*1/2}$. We now analyze the middle term:
\begin{align}
 & -\sum_{i=1}^{n}L_{i}^{-1/2}\dotp{\nabla_{i}f\p{y_{k}}-\nabla_{i}f\p{\hat{y}_{k}},\nabla_{i}f\p{\hat{y}_{k}}}\nonumber \\
 & =-\dotp{\sum_{i=1}^{n}L_{i}^{-1/4}\p{\nabla_{i}f\p{y_{k}}-\nabla_{i}f\p{\hat{y}_{k}}},\sum_{i=1}^{n}L_{i}^{-1/4}\nabla_{i}f\p{\hat{y}_{k}}}\nonumber \\
\text{(Cauchy Schwarz)} & \leq\n{\sum_{i=1}^{n}L_{i}^{-1/4}\p{\nabla_{i}f\p{y_{k}}-\nabla_{i}f\p{\hat{y}_{k}}}}\n{\sum_{i=1}^{n}L_{i}^{-1/4}\nabla_{i}f\p{\hat{y}_{k}}}\nonumber \\
 & =\p{\sum_{i=1}^{n}L_{i}^{-1/2}\n{\nabla_{i}f\p{y_{k}}-\nabla_{i}f\p{\hat{y}_{k}}}^{2}}^{1/2}\p{\sum_{i=1}^{n}L_{i}^{-1/2}\n{\nabla_{i}f\p{\hat{y}_{k}}}^{2}}^{1/2}\nonumber \\
\text{($\underbar{L}\leq L_{i},\forall i$ and \eqref{eq:def:Star-norm})} & \leq\underbar{L}^{-1/4}\n{\nabla f\p{y_{k}}-\nabla f\p{\hat{y}_{k}}}\n{\nabla f\p{\hat{y}_{k}}}_{*}\nonumber \\
 \text{($\nabla f$ is $L$-Lipschitz)}& \leq\underbar{L}^{-1/4}L\n{y_{k}-\hat{y}_{k}}\n{\nabla f\p{\hat{y}_{k}}}_{*}\nonumber
\end{align}
We then apply Lemma \ref{lem:Async-difference-lemma} to this
with $\chi=2h^{-1}\sigma^{1/2}\underbar{L}^{1/4}\kappa\psi^{-1}, A=\n{\nabla f\p{\hat{y}_{k}}}_{*}$ to yield:
\begin{align}
-\sum_{i=1}^{n}L_{i}^{-1/2}\dotp{\nabla_{i}f\p{y_{k}}-\nabla_{i}f\p{\hat{y}_{k}},\nabla_{i}f\p{\hat{y}_{k}}} &\leq h^{-1}L\sigma^{1/2}\kappa\psi^{-1}\tau\sum_{j=1}^{\tau}\n{y_{k+1-j}-y_{k-j}}^{2}\label{eq:fn-value-proof-gradient-inner-product}\\
&+\frac{1}{4}h\sigma^{1/2}\underbar{L}^{-1/2}\psi\n{\nabla f\p{\hat{y}_{k}}}_{*}^{2}\nonumber
\end{align}
Finally to complete the proof, we combine \eqref{eq:fn-value-proof-Efkp1}, with
\eqref{eq:fn-value-proof-gradient-inner-product}.
\end{proof}

%%%%%%%%%%%%%%%%%%%%%%%%%%%%%%%%%%%%%%%%
\subsection{Asynchronicity error}
%%%%%%%%%%%%%%%%%%%%%%%%%%%%%%%%%%%%%%%%
The previous inequalities produced difference terms of the form $\n{y_{k+1-j}-y_{k-j}}^{2}$. The following lemma shows how these errors can be incorporated into a Lyapunov function.

%%%%%%%%%%%%%%%%%%%%%%%%%%%%%%%%%%%%%%%%
\begin{lem}
%%%%%%%%%%%%%%%%%%%%%%%%%%%%%%%%%%%%%%%%
Let $0<r<1$ and consider the asynchronicity error and corresponding coefficients:\label{lem:Asynchronicity-error-lemma}
\begin{align*}
A_{k} & =\sumi jc_{j}\n{y_{k+1-j}-y_{k-j}}^{2}\\
c_{i} & =\sum_{j=i}^{\infty}r^{i-j-1}s_{j}
\end{align*}
This sum satisfies: 
\begin{align*}
\EE_{k}\sp{A_{k+1}-rA_{k}} & =c_{1}\EE_{k}\n{y_{k+1}-y_{k}}^{2}-\sum_{j=1}^{\infty}s_{j}\n{y_{k+1-j}-y_{k-j}}^{2}
\end{align*}

\end{lem}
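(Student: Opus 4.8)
The plan is to reduce the identity to a telescoping-type manipulation of the squared increments, followed by a single application of $\EE_k$ at the very end. Write $d_m\triangleq\n{y_{m+1}-y_m}^2$, with the convention $d_m=0$ for $m<0$ (consistent with $y_k=y_0$ for $k<0$), so that $A_k=\sum_{j=1}^{\infty}c_j d_{k-j}$. Among all the terms appearing in $A_{k+1}-rA_k$, the only one not measurable with respect to the conditioning of $\EE_k$ is $d_k=\n{y_{k+1}-y_k}^2$ (it enters $A_{k+1}$ through the $j=1$ term), since $y_0,\ldots,y_k$ are all determined by $(x_0,\ldots,x_k),(y_0,\ldots,y_k),(z_0,\ldots,z_k),(i_0,\ldots,i_{k-1})$; everything else is a deterministic computation.

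First I would shift the summation index in $A_{k+1}$: $A_{k+1}=\sum_{j=1}^{\infty}c_j d_{k+1-j}=c_1 d_k+\sum_{j=1}^{\infty}c_{j+1}d_{k-j}$. Subtracting $rA_k=\sum_{j=1}^{\infty}rc_j d_{k-j}$ gives $A_{k+1}-rA_k=c_1 d_k+\sum_{j=1}^{\infty}(c_{j+1}-rc_j)d_{k-j}$. The key computation is then the recursion $c_{i+1}-rc_i=-s_i$, which follows directly from the closed form $c_i=\sum_{j=i}^{\infty}r^{i-j-1}s_j$: multiplying by $r$ gives $rc_i=\sum_{j=i}^{\infty}r^{i-j}s_j=s_i+\sum_{j\geq i+1}r^{i-j}s_j$, whereas $c_{i+1}=\sum_{j\geq i+1}r^{(i+1)-j-1}s_j=\sum_{j\geq i+1}r^{i-j}s_j$, and subtracting yields $c_{i+1}-rc_i=-s_i$. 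Substituting this recursion into the previous display collapses the sum to $A_{k+1}-rA_k=c_1 d_k-\sum_{j=1}^{\infty}s_j d_{k-j}$.

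Finally I would take $\EE_k$ of both sides. Since $d_{k-j}=\n{y_{k+1-j}-y_{k-j}}^2$ for $j\geq1$ is $\EE_k$-measurable and only $d_k$ is random, this gives exactly $\EE_k[A_{k+1}-rA_k]=c_1\EE_k\n{y_{k+1}-y_k}^2-\sum_{j=1}^{\infty}s_j\n{y_{k+1-j}-y_{k-j}}^2$, as claimed.

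There is essentially no serious obstacle here. The only points requiring a little care are the index bookkeeping in the shift and the boundary term $j=i$ (where $r^{i-j-1}=r^{-1}$) in the closed form for $c_i$, and, if one wants the genuinely infinite-sum version, the rearrangement of the series — but in the use made of this lemma the weights $s_j$ are supported on $j\leq\tau$, so every sum above is finite and the manipulation is unconditionally valid; one may simply adopt $\sum_j|s_j|r^{-j}<\infty$ (or finite support) as a standing hypothesis.
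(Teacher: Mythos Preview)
Your proof is correct and follows essentially the same approach as the paper: shift the summation index in $A_{k+1}$, separate off the $j=1$ term, and verify the recursion $c_{i+1}-rc_i=-s_i$ directly from the closed form of $c_i$. The only cosmetic difference is that you introduce the shorthand $d_m$ and defer the application of $\EE_k$ to the end, whereas the paper carries $\EE_k$ throughout; the computations are otherwise identical.
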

%%%%%%%%%%%%%%%%%%%%%%%%%%%%%%%%%%%%%%%%

%%%%%%%%%%%%%%%%%%%%%%%%%%%%%%%%%%%%%%%%
\begin{rem}[Interpretation]
%%%%%%%%%%%%%%%%%%%%%%%%%%%%%%%%%%%%%%%%
This result means that an asynchronicity error term $A_{k}$ can negate a series of difference terms $-\sum_{j=1}^{\infty}s_{j}\n{y_{k+1-j}-y_{k-j}}^{2}$ at the cost of producing an additional error $c_{1}\EE_{k}\n{y_{k+1}-y_{k}}^{2}$, while maintaining a convergence rate of $r$. This essentially converts difference terms, which are hard to deal with, into a $\n{y_{k+1}-y_{k}}^{2}$ term which can be negated by other terms in the Lyapunov function.
The proof is straightforward.

\end{rem}
%%%%%%%%%%%%%%%%%%%%%%%%%%%%%%%%%%%%%%%%

%%%%%%%%%%%%%%%%%%%%%%%%%%%%%%%%%%%%%%%%
\begin{proof}
%%%%%%%%%%%%%%%%%%%%%%%%%%%%%%%%%%%%%%%%
\begin{align*}
\EE_{k}\sp{A_{k+1}-rA_{k}} & =\EE_{k}\sum_{j=0}^{\infty}c_{j+1}\n{y_{k+1-j}-y_{k-j}}^{2}-r\EE_{k}\sumi jc_{j}\n{y_{k+1-j}-y_{k-j}}^{2}\\
 & =c_{1}\EE_{k}\n{y_{k+1}-y_{k}}^{2}+\EE_{k}\sumi j\p{c_{j+1}-rc_{j}}\n{y_{k+1-j}-y_{k-j}}^{2}
\end{align*}
Noting the following completes the proof:
\begin{align*}
c_{i+1}-rc_{i} & =\sum_{j=i+1}^{\infty}r^{i+1-j-1}s_{j}-r\sum_{j=i}^{\infty}r^{i-j-1}s_{j}=-s_{i}\qedhere
\end{align*}
\end{proof}
%%%%%%%%%%%%%%%%%%%%%%%%%%%%%%%%%%%%%%%%

Given that $A_{k}$ allows us to negate difference terms, we now analyze the cost $c_{1}\EE_{k}\n{y_{k+1}-y_{k}}^{2}$ of this negation.

%%%%%%%%%%%%%%%%%%%%%%%%%%%%%%%%%%%%%%%%
\begin{lem}
%%%%%%%%%%%%%%%%%%%%%%%%%%%%%%%%%%%%%%%%
We have:\label{lem:y-Diff-lemma}
\begin{align*}
\EE_{k}\n{y_{k+1}-y_{k}}^{2} & \leq2\alpha^{2}\beta^{2}\n{v_{k}-y_{k}}^{2}+2S^{-1}\underbar{L}^{-1}\n{\nabla f\p{\hat{y}_{k}}}^{2}
\end{align*}

\end{lem}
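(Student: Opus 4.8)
The plan is to compute $y_{k+1}-y_k$ directly from the update rules and then apply one elementary inequality. Writing $y_{k+1}=\alpha v_{k+1}+\p{1-\alpha}x_{k+1}$ (the $\p{k+1}$-instance of \eqref{eq:Algorithm-y}) and substituting \eqref{eq:Algorithm-x} for $x_{k+1}$ and \eqref{eq:Algorithm-v} for $v_{k+1}$, the $v_k$-terms contribute $\alpha\beta v_k$, the $y_k$-terms contribute $\p{\alpha\p{1-\beta}+\p{1-\alpha}}y_k=\p{1-\alpha\beta}y_k$, and the two gradient terms combine into a single multiple of $\nabla_{i_k}f\p{\hat y_k}$. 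Subtracting $y_k$ then produces the clean identity
\[
y_{k+1}-y_k=\alpha\beta\p{v_k-y_k}-\mu_{i_k}\nabla_{i_k}f\p{\hat y_k},\qquad \mu_i\triangleq\alpha\sigma^{-1/2}L_i^{-1/2}+\p{1-\alpha}hL_i^{-1}.
\]

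Next I apply $\n{a+b}^2\le 2\n{a}^2+2\n{b}^2$ to this identity, which gives
\[
\n{y_{k+1}-y_k}^2\le 2\alpha^2\beta^2\n{v_k-y_k}^2+2\mu_{i_k}^2\n{\nabla_{i_k}f\p{\hat y_k}}^2 .
\]
Taking the conditional expectation $\EE_k$: the pair $\p{v_k,y_k}$ is $\EE_k$-measurable, and $i_k$ is drawn independently with $\PP\sp{i_k=i}=L_i^{1/2}/S$ from \eqref{eq:Block-probability-distribution}, so the first term is left unchanged while $\EE_k\sp{\mu_{i_k}^2\n{\nabla_{i_k}f\p{\hat y_k}}^2}=\sum_{i=1}^{n}\frac{L_i^{1/2}}{S}\mu_i^2\n{\nabla_i f\p{\hat y_k}}^2$.

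It then suffices to bound each scalar weight $\frac{L_i^{1/2}}{S}\mu_i^2$ by $S^{-1}\underbar{L}^{-1}$, after which resumming over the blocks recovers the stated inequality. This last step is the only real work, and it is where the parameter definitions enter: \eqref{eq:def:alpha} with $\psi\ge0$ gives $\alpha\le\sigma^{1/2}S^{-1}$, hence $\alpha\sigma^{-1/2}\le S^{-1}$; \eqref{eq:def:h} with $\psi\ge0$ (and $\sigma\le\underbar{L}$) gives $0<h\le1$; and, together with $1-\alpha\le1$ and the elementary facts $S=\sum_jL_j^{1/2}\ge L_i^{1/2}$ (so $S^{-1}\le L_i^{-1/2}$) and $L_i\ge\underbar{L}$, one bounds $\mu_i$ by a fixed multiple of $L_i^{-1/2}\underbar{L}^{-1/2}$ and concludes. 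The main obstacle is purely bookkeeping — arranging these crude bounds so that the constants line up with the statement — rather than any conceptual difficulty; the argument is entirely algebraic and uses no convexity or smoothness of $f$, only the definitions of $\alpha$, $h$ and the sampling distribution.
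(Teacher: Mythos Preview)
Your derivation of the identity
\[
y_{k+1}-y_k=\alpha\beta\p{v_k-y_k}-\mu_{i_k}\nabla_{i_k}f\p{\hat y_k},\qquad \mu_i=\alpha\sigma^{-1/2}L_i^{-1/2}+\p{1-\alpha}hL_i^{-1},
\]
the application of $\n{a+b}^2\le 2\n a^2+2\n b^2$, and the passage to $\EE_k$ are exactly what the paper does. The gap is in the final step. The inequality you actually need (after factoring $\mu_i=L_i^{-1/2}\underbar L^{-1/2}\bigl(\underbar L^{1/2}\alpha\sigma^{-1/2}+h(1-\alpha)L_i^{-1/2}\underbar L^{1/2}\bigr)$ and using $L_i\ge\underbar L$) is
\[
\underbar L^{1/2}\sigma^{-1/2}\alpha + h(1-\alpha)\;\le\;1,
\]
and your listed bounds do \emph{not} give this: with $h\le 1$ and $1-\alpha\le 1$ you only get $h(1-\alpha)\le 1$, and the remaining term $\underbar L^{1/2}\sigma^{-1/2}\alpha\le \underbar L^{1/2}S^{-1}$ is strictly positive, so the sum exceeds $1$. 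Equivalently, your cruder route $\mu_i\le S^{-1}L_i^{-1/2}+L_i^{-1}\le 2L_i^{-1}$ yields a factor $4$ in front of the gradient term, which does not ``line up'' with the stated constant.

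The paper's argument here is not mere bookkeeping: it relies on the fact that $h$ is \emph{strictly} below $1$ by the precise margin $1-h=\tfrac12\sigma^{1/2}\underbar L^{-1/2}\psi$, and this margin is used to absorb the $\underbar L^{1/2}\sigma^{-1/2}\alpha$ term. Concretely, the paper writes $\underbar L^{1/2}\sigma^{-1/2}\alpha+h(1-\alpha)=h+\alpha(\underbar L^{1/2}\sigma^{-1/2}-h)\le 1-\tfrac12\sigma^{1/2}\underbar L^{-1/2}\psi+\underbar L^{1/2}S^{-1}$, and then invokes the \emph{definition} \eqref{eq:def:psi} of $\psi$ (rearranged to bound $S^{-1}$) together with $\psi\le\tfrac12$ and $\tau\ge1$ to conclude that the right side is $\le 1$. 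So you need to use the specific coupling between $h$, $\psi$, and $S$, not just the coarse facts $h\le1$ and $\alpha\sigma^{-1/2}\le S^{-1}$.
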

%%%%%%%%%%%%%%%%%%%%%%%%%%%%%%%%%%%%%%%%

%%%%%%%%%%%%%%%%%%%%%%%%%%%%%%%%%%%%%%%%
\begin{proof}
%%%%%%%%%%%%%%%%%%%%%%%%%%%%%%%%%%%%%%%%
\begin{align}
y_{k+1}-y_{k} & =\p{\alpha v_{k+1}+\p{1-\alpha}x_{k+1}}-y_{k}\nonumber \\
 & =\alpha\p{\beta v_{k}+\p{1-\beta}y_{k}-\sigma^{-1/2}L_{i_{k}}^{-1/2}\nabla_{i_{k}}f\p{\hat{y}_{k}}}+\p{1-\alpha}\p{y_{k}-hL_{i_{k}}^{-1}\nabla_{i^{k}}f\p{\hat{y}_{k}}}-y_{k}\label{eq:Ydiff-proof-vsub}\\
 & =\alpha\beta v_{k}+\alpha\p{1-\beta}y_{k}-\alpha\sigma^{-1/2}L_{i_{k}}^{-1/2}\nabla_{i_{k}}f\p{\hat{y}_{k}}-\alpha y_{k}-\p{1-\alpha}hL_{i_{k}}^{-1}\nabla_{i^{k}}f\p{\hat{y}_{k}}\nonumber \\
 & =\alpha\beta\p{v_{k}-y_{k}}-\p{\alpha\sigma^{-1/2}L_{i_{k}}^{-1/2}+h\p{1-\alpha}L_{i_{k}}^{-1}}\nabla_{i^{k}}f\p{\hat{y}_{k}}\nonumber \\
\n{y_{k+1}-y_{k}}^{2} & \leq2\alpha^{2}\beta^{2}\n{v_{k}-y_{k}}^{2}+2\p{\alpha\sigma^{-1/2}L_{i_{k}}^{-1/2}+h\p{1-\alpha}L_{i_{k}}^{-1}}^{2}\n{\nabla_{i^{k}}f\p{\hat{y}_{k}}}^{2}\label{eq:Ydiff-proof-split}
\end{align}
Here \eqref{eq:Ydiff-proof-vsub} following from \eqref{eq:Algorithm-v}, the definition of $v_{k+1}$. \eqref{eq:Ydiff-proof-split} follows from the inequality $\n{x+y}^{2}\leq2\n x^{2}+2\n y^{2}$. The rest is simple algebraic manipulation. 
\begin{align*}
\n{y_{k+1}-y_{k}}^{2} & \leq2\alpha^{2}\beta^{2}\n{v_{k}-y_{k}}^{2}+2L_{i_{k}}^{-1}\p{\alpha\sigma^{-1/2}+h\p{1-\alpha}L_{i_{k}}^{-1/2}}^{2}\n{\nabla_{i^{k}}f\p{\hat{y}_{k}}}^{2}\\
 \text{($\underbar{L}\leq L_i,\forall i$)}& \leq2\alpha^{2}\beta^{2}\n{v_{k}-y_{k}}^{2}+2L_{i_{k}}^{-1}\p{\alpha\sigma^{-1/2}+h\p{1-\alpha}\underbar{L}^{-1/2}}^{2}\n{\nabla_{i^{k}}f\p{\hat{y}_{k}}}^{2}\\
 & =2\alpha^{2}\beta^{2}\n{v_{k}-y_{k}}^{2}+2L_{i_{k}}^{-1}\underbar{L}^{-1}\p{\underbar{L}^{1/2}\sigma^{-1/2}\alpha+h\p{1-\alpha}}^{2}\n{\nabla_{i^{k}}f\p{\hat{y}_{k}}}^{2}\\
\EE\n{y_{k+1}-y_{k}}^{2} & \leq2\alpha^{2}\beta^{2}\n{v_{k}-y_{k}}^{2}+2S^{-1}\underbar{L}^{-1}\p{\underbar{L}^{1/2}\sigma^{-1/2}\alpha+h\p{1-\alpha}}^{2}\n{\nabla f\p{\hat{y}_{k}}}_{*}^{2}
\end{align*}

Finally, to complete the proof, we prove $\underbar{L}^{1/2}\sigma^{-1/2}\alpha+h\p{1-\alpha}\leq1$.
\begin{align}
  \underbar{L}^{1/2}\sigma^{-1/2}\alpha+h\p{1-\alpha} & =h+\alpha\p{\underbar{L}^{1/2}\sigma^{-1/2}-h}\nonumber\\
\text{(definitions of \ensuremath{h} and \ensuremath{\alpha}: \eqref{eq:def:alpha}, and \eqref{eq:def:h})} & =1-\frac{1}{2}\sigma^{1/2}\underbar{L}^{-1/2}\psi+\sigma^{1/2}S^{-1}\p{\underbar{L}^{1/2}\sigma^{-1/2}}\nonumber\\
 & \leq1-\sigma^{1/2}\underbar{L}^{-1/2}\p{\frac{1}{2}\psi-\sigma^{-1/2}S^{-1}\underbar{L}^{1}}\label{eq:Async-cost-proof-leq-1-inequality-1}
\end{align}
Rearranging the definition of $\psi$, we have:
\begin{align*}
S^{-1} & =\frac{1}{9^{2}}\psi^{2}\underbar{L}^{1}L^{-3/2}\kappa^{-1/2}\tau^{-2}\\
\text{(\ensuremath{\tau\geq}1 and \ensuremath{\psi\leq\frac{1}{2}})} & \leq\frac{1}{18^{2}}\underbar{L}^{1}L^{-3/2}\kappa^{-1/2}
\end{align*}
Using this on \eqref{eq:Async-cost-proof-leq-1-inequality-1}, we have:
\begin{align*}
\underbar{L}^{1/2}\alpha\sigma^{-1/2}+h\p{1-\alpha} & \leq1-\sigma^{1/2}\underbar{L}^{-1/2}\p{\frac{1}{2}\psi-\frac{1}{18^{2}}\underbar{L}^{1}L^{-3/2}\kappa^{-1/2}\sigma^{-1/2}\underbar{L}^{1}}\\
 & =1-\sigma^{1/2}\underbar{L}^{-1/2}\p{\frac{1}{2}\psi-\frac{1}{18^{2}}\p{\underbar{L}/L}^{2}}\\
\text{(\ensuremath{\psi\leq\frac{1}{2}})} & =1-\sigma^{1/2}\underbar{L}^{-1/2}\p{\frac{1}{24}-\frac{1}{18^{2}}}\leq1.
\end{align*}
This completes the proof.
\end{proof}
%%%%%%%%%%%%%%%%%%%%%%%%%%%%%%%%%%%%%%%%

%%%%%%%%%%%%%%%%%%%%%%%%%%%%%%%%%%%%%%%%
\subsection{Master inequality}
%%%%%%%%%%%%%%%%%%%%%%%%%%%%%%%%%%%%%%%%
We are finally in a position to bring together all the all the previous results together into a master inequality for the Lyapunov function $\rho_{k}$ (defined in \eqref{eq:def:rho}). After this lemma is proven, we will prove that the right hand size is negative, which will imply that $\rho_{k}$ linearly converges to $0$ with rate $\beta$. 

%%%%%%%%%%%%%%%%%%%%%%%%%%%%%%%%%%%%%%%%
\begin{lem}[Master inequality]
%%%%%%%%%%%%%%%%%%%%%%%%%%%%%%%%%%%%%%%%
We have:\label{lem:Master-inequality}
\begin{align}
 &  &  & \EE_{k}\sp{\rho_{k+1}-\beta\rho_{k}}\nonumber \\
 &  & \leq & +\n{y_{k}}^{2} & \times & \p{1-\beta-\sigma^{-1/2}S^{-1}\sigma\p{1-\psi}}\label{eq:Master-inequality}\\
 &  &  & +\n{v_{k}-y_{k}}^{2} & \times & \beta\p{2\alpha^{2}\beta c_{1}+S^{-1}\beta L^{1/2}\kappa^{-1/2}\psi-\p{1-\beta}}\nonumber \\
 &  &  & +f\p{y_{k}} & \times & \p{c-2\sigma^{-1/2}S^{-1}\p{\beta\alpha^{-1}\p{1-\alpha}+1}}\nonumber \\
 &  &  & +f\p{x_{k}} & \times & \beta\p{2\sigma^{-1/2}S^{-1}\alpha^{-1}\p{1-\alpha}-c}\nonumber \\
 &  &  & +\sum_{j=1}^{\tau}\n{y_{k+1-j}-y_{k-j}}^{2} & \times & S^{-1}L\kappa\psi^{-1}\tau\sigma^{1/2}\p{2\sigma^{-1}+c}-s\nonumber \\
 &  &  & +\n{\nabla f\p{\hat{y}_{k}}}_{*}^{2} & \times & S^{-1}\p{\sigma^{-1}+2\underbar{L}^{-1}c_{1}-\frac{1}{2}ch\p{2-h\p{1+\frac{1}{2}\sigma^{1/2}\underbar{L}^{-1/2}\psi}}}\nonumber 
\end{align}

\end{lem}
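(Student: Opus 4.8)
The plan is to split $\EE_{k}[\rho_{k+1}-\beta\rho_{k}]$ along the three summands of the Lyapunov function $\rho_{k}=\n{v_{k}}^{2}+A_{k}+cf(x_{k})$, bound each piece with the lemmas already established, and then gather the coefficients of the six ``basis'' quantities $\n{y_{k}}^{2}$, $\n{v_{k}-y_{k}}^{2}$, $f(y_{k})$, $f(x_{k})$, $\sum_{j=1}^{\tau}\n{y_{k+1-j}-y_{k-j}}^{2}$, and $\n{\nabla f(\hat{y}_{k})}_{*}^{2}$ appearing on the right of \eqref{eq:Master-inequality}.

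For the $\EE_{k}\n{v_{k+1}}^{2}-\beta\n{v_{k}}^{2}$ piece I would start from \eqref{eq:Starting-point}, which already isolates the clean terms $(1-\beta)\n{y_{k}}^{2}$, $-\beta(1-\beta)\n{v_{k}-y_{k}}^{2}$, $S^{-1}\sigma^{-1}\n{\nabla f(\hat{y}_{k})}_{*}^{2}$, and two inner products. I feed $-2\sigma^{-1/2}S^{-1}\dotp{y_{k},\nabla f(\hat{y}_{k})}$ into \eqref{eq:Cross-term-lemma-1} (scaled by $2\sigma^{-1/2}S^{-1}$), which produces $-2\sigma^{-1/2}S^{-1}f(y_{k})$, the $\n{y_{k}}^{2}$-contribution $-\sigma^{-1/2}S^{-1}\sigma(1-\psi)$, and one difference-term contribution; after rewriting $\dotp{y_{k}-x_{k},\nabla f(\hat{y}_{k})}=-\dotp{\nabla f(\hat{y}_{k}),x_{k}-y_{k}}$ I feed the other inner product into \eqref{eq:Cross-term-lemma-2} (scaled by $2\sigma^{-1/2}S^{-1}\beta\alpha^{-1}(1-\alpha)$), which generates the $f(x_{k})-f(y_{k})$ pair, a $\n{v_{k}-y_{k}}^{2}$-contribution, and a second difference-term contribution. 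For the $c(\EE_{k}f(x_{k+1})-\beta f(x_{k}))$ piece I apply $c\times$ Lemma \ref{lem:Function-value-lemma}, which gives $cf(y_{k})$, a piece of the $\n{\nabla f(\hat{y}_{k})}_{*}^{2}$ coefficient, and a third difference-term contribution, while $-\beta cf(x_{k})$ is kept untouched.

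The $\EE_{k}[A_{k+1}-\beta A_{k}]$ piece is where the asynchronicity error is absorbed, and is the crux. Since $\psi\geq0$ gives $\beta\geq1-\sigma^{1/2}S^{-1}$, and $A_{k}\geq0$, I can bound $\EE_{k}[A_{k+1}-\beta A_{k}]\leq\EE_{k}[A_{k+1}-rA_{k}]$ with $r=1-\sigma^{1/2}S^{-1}\in(0,1)$ and invoke Lemma \ref{lem:Asynchronicity-error-lemma}; matching the coefficients $c_{i}$ of Definition \ref{def:Asynchronicity-error} against the representation $c_{i}=\sum_{j\geq i}r^{i-j-1}s_{j}$ forces the choice $s_{j}=\tfrac{6}{S}L^{1/2}\kappa^{3/2}\tau\psi^{-1}$ for $1\leq j\leq\tau$ (and $s_{j}=0$ otherwise, which also makes the nominally infinite sums of Lemma \ref{lem:Asynchronicity-error-lemma} reduce to the finite ones of Definition \ref{def:Asynchronicity-error}), and pins down $c_{1}$. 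The lemma then converts $A$ into $-\sum_{j=1}^{\tau}s_{j}\n{y_{k+1-j}-y_{k-j}}^{2}$ at the cost of $c_{1}\EE_{k}\n{y_{k+1}-y_{k}}^{2}$, which I bound by Lemma \ref{lem:y-Diff-lemma}; this injects $2\alpha^{2}\beta^{2}c_{1}$ into the $\n{v_{k}-y_{k}}^{2}$ coefficient and $2S^{-1}\underbar{L}^{-1}c_{1}$ into the $\n{\nabla f(\hat{y}_{k})}_{*}^{2}$ coefficient.

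It remains to collect. Each basis quantity accumulates contributions from only a few of the steps above, and the remaining work is to verify — using only $\kappa=L/\sigma$ and the resulting $L^{1/2}\kappa^{-1/2}=\sigma^{1/2}=L\kappa^{-1}\sigma^{-1/2}$ — that the totals are exactly the six coefficients in \eqref{eq:Master-inequality}; for example the difference-term coefficient is $\sigma^{-1/2}S^{-1}L\kappa\psi^{-1}\tau$ (from \eqref{eq:Cross-term-lemma-1}) $+\ \sigma^{-1/2}S^{-1}L\kappa\psi^{-1}\tau$ (from \eqref{eq:Cross-term-lemma-2}, the $\alpha$, $\beta$, $1-\alpha$ factors cancelling against the scaling) $+\ cS^{-1}L\sigma^{1/2}\kappa\psi^{-1}\tau$ (from Lemma \ref{lem:Function-value-lemma}) $-\ s$, which rearranges to $S^{-1}L\kappa\psi^{-1}\tau\sigma^{1/2}(2\sigma^{-1}+c)-s$. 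I expect the main obstacle to be purely organizational rather than any single estimate: tracking the emboldened asynchronicity terms through the three distinct scalings by which \eqref{eq:Cross-term-lemma-1}, \eqref{eq:Cross-term-lemma-2}, and Lemma \ref{lem:Function-value-lemma} enter, and getting the $s_{j}$-identification precisely right so that Lemma \ref{lem:Asynchronicity-error-lemma} applies to the specific coefficients of Definition \ref{def:Asynchronicity-error}.
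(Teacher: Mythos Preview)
Your proposal is correct and follows essentially the same route as the paper: split $\rho_k$ into its three pieces, feed \eqref{eq:Starting-point} through Lemma~\ref{lem:Cross-term-lemma}, apply $c\times$Lemma~\ref{lem:Function-value-lemma}, handle $A_{k+1}-\beta A_k$ via Lemma~\ref{lem:Asynchronicity-error-lemma} with $r=1-\sigma^{1/2}S^{-1}$ and the stated $s_j$ (the paper phrases the $\beta\geq r$ step as ``$A_k(r-\beta)\leq0$'', which is the same observation), then invoke Lemma~\ref{lem:y-Diff-lemma} and collect. Your worked example for the difference-term coefficient and your identification of $s_j$ match the paper exactly.
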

%%%%%%%%%%%%%%%%%%%%%%%%%%%%%%%%%%%%%%%%

\begin{proof}
\begin{align}
 & \EE_{k}\n{v_{k+1}}^{2}-\beta\n{v_{k}}^{2}\nonumber \\
\text{(\ref{eq:Starting-point})} & =\p{1-\beta}\n{y_{k}}^{2}-\beta\p{1-\beta}\n{v_{k}-y_{k}}^{2}+S^{-1}\sigma^{-1}\n{\nabla f\p{\hat{y}_{k}}}_{*}^{2}\nonumber \\
 & -2\sigma^{-1/2}S^{-1}\dotp{y_{k},\nabla f\p{\hat{y}_{k}}}\nonumber \\
 & -2\sigma^{-1/2}S^{-1}\beta\alpha^{-1}\p{1-\alpha}\dotp{y_{k}-x_{k},\nabla f\p{\hat{y}_{k}}}\nonumber \\
 & \leq\p{1-\beta}\n{y_{k}}^{2}-\beta\p{1-\beta}\n{v_{k}-y_{k}}^{2}+S^{-1}\sigma^{-1}\n{\nabla f\p{\hat{y}_{k}}}_{*}^{2}\label{eq:master-inequality-proof-vkp1}\\
\text{(\ref{eq:Cross-term-lemma-1})} & +2\sigma^{-1/2}S^{-1}\p{-f\p{y_{k}}-\frac{1}{2}\sigma\p{1-\psi}\n{y_{k}}^{2}+\frac{1}{2}L\kappa\psi^{-1}\tau\sum_{j=1}^{\tau}\n{y_{k+1-j}-y_{k-j}}^{2}}\nonumber \\
\text{(\ref{eq:Cross-term-lemma-2})} & -2\sigma^{-1/2}S^{-1}\beta\alpha^{-1}\p{1-\alpha}\p{f\p{x_{k}}-f\p{y_{k}}}\nonumber \\
 & +\sigma^{-1/2}S^{-1}\beta L\p{\kappa^{-1}\psi\beta\n{v_{k}-y_{k}}^{2}+\kappa\psi^{-1}\beta^{-1}\tau\sum_{j=1}^{\tau}\n{y_{k+1-j}-y_{k-j}}^{2}}\nonumber 
\end{align}
We now collect and organize the similar terms of this inequality.
\begin{align*}
&&\leq & +\n{y_{k}}^{2} & \times & \p{1-\beta-\sigma^{-1/2}S^{-1}\sigma\p{1-\psi}}\\
&& & +\n{v_{k}-y_{k}}^{2} & \times & \beta\p{\sigma^{-1/2}S^{-1}\beta L\kappa^{-1}\psi-\p{1-\beta}}\\
&& & -f\p{y_{k}} & \times & 2\sigma^{-1/2}S^{-1}\p{\beta\alpha^{-1}\p{1-\alpha}+1}\\
&& & +f\p{x_{k}} & \times & 2\sigma^{-1/2}S^{-1}\beta\alpha^{-1}\p{1-\alpha}\\
&& & +\sum_{j=1}^{\tau}\n{y_{k+1-j}-y_{k-j}}^{2} & \times & 2\sigma^{-1/2}S^{-1}L\kappa\psi^{-1}\tau\\
&& & +\n{\nabla f\p{\hat{y}_{k}}}_{*}^{2} & \times & \sigma^{-1}S^{-1}
\end{align*}
Now finally, we add the function-value and asynchronicity terms to
our analysis. We use Lemma \ref{lem:Asynchronicity-error-lemma} is
with $r=1-\sigma^{1/2}S^{-1}$, and 
\begin{align}
s_{i} & =\begin{cases}
s=6S^{-1}L^{1/2}\kappa^{3/2}\psi^{-1}\tau, & 1\leq i\leq\tau\\
0, & i>\tau
\end{cases}\label{eq:def:s}
\end{align}
Notice that this choice of $s_{i}$ will recover the coefficient formula
given in \eqref{eq:def:Asynchronicity-error-and-ci}. Hence we have:
\begin{align}
 & \EE_{k}\sp{cf\p{x_{k+1}}+A_{k+1}-\beta\p{cf\p{x_{k}}+A_{k}}}\nonumber \\
\text{(Lemma \ref{lem:Function-value-lemma})} & \leq cf\p{y_{k}}-\frac{1}{2}ch\p{2-h\p{1+\frac{1}{2}\sigma^{1/2}\underbar{L}^{-1/2}\psi}}S^{-1}\n{\nabla f\p{\hat{y}_{k}}}_{*}^{2}-\beta cf\p{x_{k}}\label{eq:master-inequality-proof-fA-kp1}\\
 & +S^{-1}L\sigma^{1/2}\kappa\psi^{-1}\tau\sum_{j=1}^{\tau}\n{y_{k+1-j}-y_{k-j}}^{2}\nonumber \\
\text{(Lemmas \ref{lem:Asynchronicity-error-lemma} and \ref{lem:y-Diff-lemma})} & +c_{1}\p{2\alpha^{2}\beta^{2}\n{v_{k}-y_{k}}^{2}+2S^{-1}\underbar{L}^{-1}\n{\nabla f\p{\hat{y}_{k}}}^{2}}\nonumber\\
&-\sum_{j=1}^{\infty}s_{j}\n{y_{k+1-j}-y_{k-j}}^{2}+A_{k}\p{r-\beta}\nonumber 
\end{align}
Notice $A_{k}\p{r-\beta}\leq0$. Finally, combining \eqref{eq:master-inequality-proof-vkp1}
and \eqref{eq:master-inequality-proof-fA-kp1} completes the proof.
\end{proof}
In the next section, we will prove that every coefficient on the right
hand side of \eqref{eq:Master-inequality} is $0$ or less, which
will complete the proof of Theorem \ref{thm:Main-theorem}.

\subsection{Proof of main theorem}
\begin{lem}
\textup{The coefficients of $\n{y_{k}}^{2}$, $f\p{y_{k}}$, and $\sum_{j=1}^{\tau}\n{y_{k+1-j}-y_{k-j}}^{2}$
in Lemma \ref{lem:Master-inequality} are non-positive.\label{lem:yk-fyk-diff-coeffs-negative}}
\end{lem}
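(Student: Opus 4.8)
The plan is to handle the three coefficients one at a time; the first two vanish identically by our choices of $\beta$ and $c$, and only the third needs a short computation.

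\emph{Coefficient of $\n{y_{k}}^{2}$.} Its value is $1-\beta-\sigma^{-1/2}S^{-1}\sigma\p{1-\psi}$. Substituting the definition \eqref{eq:def:beta}, $\beta=1-\p{1-\psi}\sigma^{1/2}S^{-1}$, gives $1-\beta=\p{1-\psi}\sigma^{1/2}S^{-1}$, while $\sigma^{-1/2}S^{-1}\sigma=\sigma^{1/2}S^{-1}$; the two contributions are equal, so the coefficient is exactly $0$. Likewise, the coefficient of $f\p{y_{k}}$, namely $c-2\sigma^{-1/2}S^{-1}\p{\beta\alpha^{-1}\p{1-\alpha}+1}$, is exactly $0$ by the definition \eqref{eq:def:c} of $c$. (Both $\beta$ and $c$ were in effect chosen so as to annihilate these coefficients.)

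\emph{Coefficient of $\sum_{j=1}^{\tau}\n{y_{k+1-j}-y_{k-j}}^{2}$.} This is $S^{-1}L\kappa\psi^{-1}\tau\sigma^{1/2}\p{2\sigma^{-1}+c}-s$, with $s=6S^{-1}L^{1/2}\kappa^{3/2}\psi^{-1}\tau$ from \eqref{eq:def:s}. Cancelling the common positive factor $S^{-1}L^{1/2}\kappa\psi^{-1}\tau$ and using $\kappa^{1/2}=L^{1/2}\sigma^{-1/2}$, the requirement that this be $\leq0$ collapses to $\sigma^{1/2}\p{2\sigma^{-1}+c}\leq6\sigma^{-1/2}$, i.e. to the single clean bound $c\leq4\sigma^{-1}$. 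To prove the latter, I would rewrite $c$: from \eqref{eq:def:alpha} one has $\alpha^{-1}-1=\p{1+\psi}\sigma^{-1/2}S$, hence $\beta\alpha^{-1}\p{1-\alpha}=\beta\p{1+\psi}\sigma^{-1/2}S$, and therefore $c=2\sigma^{-1}\p{1+\psi}\beta+2\sigma^{-1/2}S^{-1}$. Setting $u\triangleq\sigma^{1/2}S^{-1}$ and using $\beta=1-\p{1-\psi}u$, a one-line expansion yields $\sigma c=2\p{\p{1+\psi}\beta+u}=2\p{1+\psi+\psi^{2}u}$, so $c\leq4\sigma^{-1}$ is equivalent to $\psi+\psi^{2}u\leq1$. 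Finally $u\leq1$, since $S=\sum_{i=1}^{n}L_{i}^{1/2}\geq\sigma^{1/2}$ (each coordinate block inherits $\sigma$-strong convexity while its block gradient is $L_{i}$-Lipschitz, so $L_{i}\geq\sigma$), and $\psi\leq\tfrac{3}{7}$ by hypothesis, whence $\psi+\psi^{2}u\leq\psi+\psi^{2}\leq\tfrac{3}{7}+\tfrac{9}{49}=\tfrac{30}{49}<1$.

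\emph{Main obstacle.} The only nontrivial point is the difference-term coefficient: one must track carefully how the powers of $L$, $\kappa$, $\tau$, $\psi$ enter $s$ as opposed to the $\sigma^{1/2}\p{2\sigma^{-1}+c}$ term, in order to see that they all cancel and leave the scale-free condition $c\leq4\sigma^{-1}$, and then rearrange $\sigma c$ into the form $2\p{1+\psi+\psi^{2}u}$ so that the bound is manifestly implied by $\psi\leq\tfrac{3}{7}$ (with a comfortable margin, using only $u\leq1$). The identities for the $\n{y_{k}}^{2}$ and $f\p{y_{k}}$ coefficients are pure substitution.
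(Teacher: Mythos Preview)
Your proof is correct and follows essentially the same approach as the paper: the first two coefficients vanish by the definitions of $\beta$ and $c$, and for the difference-term coefficient both you and the paper reduce the inequality to $c\leq4\sigma^{-1}$, derive the identity $\sigma c=2\p{1+\psi+\psi^{2}\sigma^{1/2}S^{-1}}$, and conclude via $\psi\leq\tfrac{1}{2}$ (or your tighter $\psi\leq\tfrac{3}{7}$) together with $\sigma^{1/2}S^{-1}\leq1$. Your treatment is in fact slightly more careful, since you explicitly justify $\sigma^{1/2}S^{-1}\leq1$ from $L_{i}\geq\sigma$, whereas the paper simply asserts it.
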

\begin{proof}
The coefficient $1-\p{1-\psi}\sigma^{1/2}S^{-1}-\beta$ of $\n{y_{k}}^{2}$
is identically $0$ via the definition \eqref{eq:def:beta} of $\beta$.
The coefficient $c-2\sigma^{-1/2}S^{-1}\p{\beta\alpha^{-1}\p{1-\alpha}+1}$
of $f\p{y_{k}}$ is identically $0$ via the definition \eqref{eq:def:c}
of $c$. 

First notice from the definition \eqref{eq:def:c} of $c$:
\begin{align}
c & =2\sigma^{-1/2}S^{-1}\p{\beta\alpha^{-1}\p{1-\alpha}+1}\nonumber \\
\text{(definitions of \ensuremath{\alpha,\beta})} & =2\sigma^{-1/2}S^{-1}\p{\p{1-\sigma^{1/2}S^{-1}\p{1-\psi}}\p{1+\psi}\sigma^{-1/2}S+1}\nonumber \\
 & =2\sigma^{-1/2}S^{-1}\p{\p{1+\psi}\sigma^{-1/2}S+\psi^{2}}\nonumber \\
 & =2\sigma^{-1}\p{\p{1+\psi}+\psi^{2}\sigma^{1/2}S^{-1}}\label{eq:c-equals}\\
c & \leq4\sigma^{-1}\label{eq:c-leq}
\end{align}

Here the last line followed since $\psi\leq\frac{1}{2}$ and $\sigma^{1/2}S^{-1}\leq1$.
We now analyze the coefficient of $\sum_{j=1}^{\tau}\n{y_{k+1-j}-y_{k-j}}^{2}$.
\begin{align*}
 & S^{-1}L\kappa\psi^{-1}\tau\sigma^{1/2}\p{2\sigma^{-1}+c}-s\\
\text{(\ref{eq:c-leq})} & \leq6L^{1/2}\kappa^{3/2}\psi^{-1}\tau-s\\
\text{(definition \eqref{eq:def:s} of \ensuremath{s})} & \leq0\qedhere
\end{align*}
\end{proof}
\begin{lem}
\textup{The coefficient $\beta\p{2\sigma^{-1/2}S^{-1}\alpha^{-1}\p{1-\alpha}-c}$
of $f\p{x_{k}}$ in Lemma \ref{lem:Master-inequality} is non-positive.\label{lem:fxk-coeff-negative}}
\end{lem}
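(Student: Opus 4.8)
The plan is to use $\beta>0$ (which holds since $\psi\le\frac{3}{7}<1$ and $\sigma^{1/2}S^{-1}\le1$, so $(1-\psi)\sigma^{1/2}S^{-1}<1$) to reduce the claim to the scalar inequality $2\sigma^{-1/2}S^{-1}\alpha^{-1}(1-\alpha)\le c$. Substituting the definition \eqref{eq:def:c} of $c$ and cancelling the common positive factor $2\sigma^{-1/2}S^{-1}$, this is in turn equivalent to $\alpha^{-1}(1-\alpha)(1-\beta)\le 1$, so the whole lemma comes down to bounding a product of three elementary quantities.

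First I would simplify $\alpha^{-1}(1-\alpha)=\alpha^{-1}-1$. From the definition \eqref{eq:def:alpha} of $\alpha$ we have $\alpha^{-1}=1+(1+\psi)\sigma^{-1/2}S$, hence $\alpha^{-1}(1-\alpha)=(1+\psi)\sigma^{-1/2}S$. Next I would simplify $1-\beta$: the definition \eqref{eq:def:beta} of $\beta$ gives directly $1-\beta=(1-\psi)\sigma^{1/2}S^{-1}$.

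Multiplying these two expressions, the $\sigma^{\pm1/2}$ and $S^{\pm1}$ factors cancel and we are left with $\alpha^{-1}(1-\alpha)(1-\beta)=(1+\psi)(1-\psi)=1-\psi^{2}\le1$, which is exactly what is needed; tracing back, the coefficient of $f\p{x_k}$ equals $\beta\cdot 2\sigma^{-1/2}S^{-1}\alpha^{-1}(1-\alpha)(\,(1-\beta)-1+1-1\,)$-type rearrangement, and in fact equals $-\beta\cdot 2\sigma^{-1/2}S^{-1}\psi^{2}\le0$. The only real ``obstacle'' is bookkeeping the algebra; it is worth noting that the hypothesis $\psi\le\frac{3}{7}$ is not even used here, since $1-\psi^{2}\le1$ for all real $\psi$, and the coefficient vanishes precisely when $\psi=0$, consistent with the reduction to \texttt{NU\_ACDM} in the synchronous case.
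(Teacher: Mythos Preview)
Your proof is correct and essentially identical to the paper's: both compute $2\sigma^{-1/2}S^{-1}\alpha^{-1}(1-\alpha)-c$ and arrive at $-2\psi^{2}\sigma^{-1/2}S^{-1}\le 0$. The only cosmetic difference is that the paper first expands $c$ into the form $2\sigma^{-1}\bigl((1+\psi)+\psi^{2}\sigma^{1/2}S^{-1}\bigr)$ (its equation \eqref{eq:c-equals}) and then subtracts, whereas you factor out $2\sigma^{-1/2}S^{-1}$ directly from the definition \eqref{eq:def:c} and reduce to $\alpha^{-1}(1-\alpha)(1-\beta)=1-\psi^{2}\le 1$; the algebra and the final constant match exactly.
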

\begin{proof}
\begin{align*}
 & 2\sigma^{-1/2}S^{-1}\alpha^{-1}\p{1-\alpha}-c\\
\text{(\ref{eq:c-equals})} & =2\sigma^{-1/2}S^{-1}\p{1+\psi}\sigma^{-1/2}S-2\sigma^{-1}\p{\p{1+\psi}+\psi^{2}\sigma^{1/2}S^{-1}}\\
 & =2\sigma^{-1}\p{\p{1+\psi}-\p{\p{1+\psi}+\psi^{2}\sigma^{1/2}S^{-1}}}\\
 & =-2\psi^{2}\sigma^{-1/2}S^{-1}\leq0\qedhere
\end{align*}
 
\end{proof}
\begin{lem}
\textup{The coefficient $S^{-1}\p{\sigma^{-1}+2\underbar{L}^{-1}c_{1}-\frac{1}{2}ch\p{2-h\p{1+\frac{1}{2}\sigma^{1/2}\underbar{L}^{-1/2}\psi}}}$
of $\n{\nabla f\p{\hat{y}_{k}}}_{*}^{2}$ in Lemma \ref{lem:Master-inequality}
is non-positive.\label{lem:grad-coeff-negative}}
\end{lem}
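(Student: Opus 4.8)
Since $S^{-1}>0$, it suffices to prove that the bracketed factor
\[
\sigma^{-1}+2\underbar{L}^{-1}c_{1}-\tfrac{1}{2}ch\p{2-h\p{1+\tfrac{1}{2}\sigma^{1/2}\underbar{L}^{-1/2}\psi}}
\]
is non-positive; equivalently, that the ``main term'' $\tfrac{1}{2}ch\p{2-h\p{1+\tfrac{1}{2}\sigma^{1/2}\underbar{L}^{-1/2}\psi}}$ dominates $\sigma^{-1}+2\underbar{L}^{-1}c_{1}$. The plan is to pair a sharp lower bound on the main term with a sharp upper bound on $c_{1}$. Sharpness is essential here: in the synchronous case ($\psi=0$, hence $c_{1}=0$, $h=1$, $c=2\sigma^{-1}$) the inequality becomes an equality, so any estimate that is lossy to first order in $\psi$ will fail.

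For the main term I would set $\mu\triangleq\tfrac{1}{2}\sigma^{1/2}\underbar{L}^{-1/2}\psi$, so that $h=1-\mu$ and $\mu\le\tfrac{1}{2}\psi$ (because $\sigma\le\underbar{L}$, the restriction of $f$ to the $i$th block being simultaneously $\sigma$-strongly convex and $L_{i}$-smooth). A one-line identity gives $h\p{2-h\p{1+\mu}}=(1-\mu)(1+\mu^{2})$, and \eqref{eq:c-equals} gives $c\ge 2\sigma^{-1}(1+\psi)$, so
\[
\tfrac{1}{2}ch\p{2-h\p{1+\mu}}\;\ge\;\sigma^{-1}(1+\psi)(1-\mu)(1+\mu^{2})\;\ge\;\sigma^{-1}\p{1+\tfrac{1}{2}\psi(1-\psi)}\;\ge\;\sigma^{-1}\p{1+\tfrac{2}{7}\psi},
\]
using $\mu\le\tfrac{1}{2}\psi$ and $\psi\le\tfrac{3}{7}$. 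It therefore remains to show $2\underbar{L}^{-1}c_{1}\le\tfrac{2}{7}\psi\,\sigma^{-1}$.

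For the bound on $c_{1}$, recall from Lemma~\ref{lem:Asynchronicity-error-lemma} and the choices made inside the proof of Lemma~\ref{lem:Master-inequality} (namely $r=1-\sigma^{1/2}S^{-1}$ and $s_{i}\equiv s=6S^{-1}L^{1/2}\kappa^{3/2}\psi^{-1}\tau$ for $1\le i\le\tau$) that $c_{1}=s\sum_{j=1}^{\tau}r^{-j}\le s\tau\,r^{-\tau}$. I would then show $\tau\sigma^{1/2}S^{-1}\le\psi/9$ by feeding $\underbar{L}\le L$, $\sigma\le\underbar{L}$ and $S\ge n\underbar{L}^{1/2}$ into the definition \eqref{eq:def:psi} of $\psi$; since $\sigma^{1/2}S^{-1}\le 1/n$ is small, this gives $r^{-\tau}\le e^{2\tau\sigma^{1/2}S^{-1}}\le e^{2\psi/9}\le e^{2/21}<1.1$. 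Substituting $\tau^{2}=\tfrac{1}{81}\psi^{2}S\,\underbar{L}\,L^{-3/2}\kappa^{-1/2}$, again from \eqref{eq:def:psi}, makes all the system constants collapse:
\[
c_{1}\;\le\;6S^{-1}L^{1/2}\kappa^{3/2}\psi^{-1}\tau^{2}\,r^{-\tau}\;=\;\tfrac{2}{27}\,\kappa L^{-1}\psi\,\underbar{L}\,r^{-\tau}\;=\;\tfrac{2}{27}\,\sigma^{-1}\psi\,\underbar{L}\,r^{-\tau},
\]
whence $2\underbar{L}^{-1}c_{1}\le\tfrac{4}{27}e^{2/21}\,\sigma^{-1}\psi<\tfrac{2}{7}\,\sigma^{-1}\psi$, as $\tfrac{4}{27}e^{2/21}\approx 0.163<\tfrac{2}{7}$.

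Combining the two estimates, the bracketed factor is at most $\sigma^{-1}\psi\p{\tfrac{4}{27}e^{2/21}-\tfrac{2}{7}}<0$, and hence the coefficient $S^{-1}(\cdots)$ is non-positive. The main obstacle is purely one of bookkeeping precision: the lower bound $c\ge 2\sigma^{-1}(1+\psi)$ (rather than merely $c\ge 2\sigma^{-1}$, or the useless upper bound \eqref{eq:c-leq}) and the exponential estimate $r^{-\tau}\le e^{2/21}$ (rather than, say, $r^{-\tau}\le 2$) are both genuinely needed, and the final cancellation works only because the constants $6$ (in $s$, equivalently in the $c_{i}$ of Definition~\ref{def:Asynchronicity-error}) and $9$ (in $\psi$) were calibrated for exactly this inequality.
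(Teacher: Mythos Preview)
Your proof is correct and follows essentially the same approach as the paper: bound $c_{1}$ via $c_{1}\le s\tau r^{-\tau}$ with $r^{-\tau}\le e^{2\tau\sigma^{1/2}S^{-1}}$, substitute $\tau^{2}$ using the definition \eqref{eq:def:psi} of $\psi$, and combine with the exact formula \eqref{eq:c-equals} for $c$. Your organization (first isolating a lower bound $\tfrac{1}{2}ch(2-h(1+\mu))\ge\sigma^{-1}(1+\tfrac{2}{7}\psi)$, then bounding $2\underbar{L}^{-1}c_{1}$) and your explicit derivation of $\tau\sigma^{1/2}S^{-1}\le\psi/9$ are slightly tidier than the paper's version, which asserts the analogous bound $\tau\sigma^{1/2}S^{-1}\le\tfrac{1}{7}$ without justification, but the ingredients and the route are the same.
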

\begin{proof}
We first need to bound $c_{1}$. 
\begin{align*}
\text{(\eqref{eq:def:s} and \eqref{eq:def:Asynchronicity-error-and-ci}) }\,c_{1} & =s\sum_{j=1}^{\tau}\p{1-\sigma^{1/2}S^{-1}}^{-j}\\
\text{\eqref{eq:def:s} } & \leq6S^{-1}L^{1/2}\kappa^{3/2}\psi^{-1}\tau\sum_{j=1}^{\tau}\p{1-\sigma^{1/2}S^{-1}}^{-j}\\
 & \leq6S^{-1}L^{1/2}\kappa^{3/2}\psi^{-1}\tau^{2}\p{1-\sigma^{1/2}S^{-1}}^{-\tau}
\end{align*}
It can be easily verified that if $x\leq\frac{1}{2}$ and $y\geq0$,
then $\p{1-x}^{-y}\leq\exp\p{2xy}$. Using this fact with $x=\sigma^{1/2}S^{-1}$
and $y=\tau$, we have:
\begin{align}
 & \leq6S^{-1}L^{1/2}\kappa^{3/2}\psi^{-1}\tau^{2}\exp\p{\tau\sigma^{1/2}S^{-1}}\nonumber \\
\text{(since $\psi\leq 3/7$ and hence \ensuremath{\tau\sigma^{1/2}S^{-1}\leq\frac{1}{7}})} & \leq S^{-1}L^{1/2}\kappa^{3/2}\psi^{-1}\tau^{2}\times6\exp\p{\frac{1}{7}}\nonumber \\
c_{1} & \leq7S^{-1}L^{1/2}\kappa^{3/2}\psi^{-1}\tau^{2}\label{eq:c1-leq}
\end{align}
We now analyze the coefficient of $\n{\nabla f\p{\hat{y}_{k}}}_{*}^{2}$
\begin{align*}
 & \sigma^{-1}+2\underbar{L}^{-1}c_{1}-\frac{1}{2}ch\p{2-h\p{1+\frac{1}{2}\sigma^{1/2}\underbar{L}^{-1/2}\psi}}\\
\text{(\ref{eq:c1-leq} and \ref{eq:def:h})} & \leq\sigma^{-1}+14S^{-1}\underbar{L}^{-1}L^{1/2}\kappa^{3/2}\psi^{-1}\tau^{2}-\frac{1}{2}ch\p{1+\frac{1}{4}\sigma^{1}\underbar{L}^{-1}\psi^{2}}\\
 & \leq\sigma^{-1}+14S^{-1}\underbar{L}^{-1}L^{1/2}\kappa^{3/2}\psi^{-1}\tau^{2}-\frac{1}{2}ch\\
\text{(definition \ref{eq:def:psi} of \ensuremath{\psi})} & =\sigma^{-1}+\frac{14}{81}\sigma^{-1}\psi-\frac{1}{2}ch\\
\text{(\ref{eq:c-equals}, definition \ref{eq:def:h} of \ensuremath{h})} & =\sigma^{-1}\p{1+\frac{14}{81}\psi-\p{\p{1+\psi}+\psi^{2}\sigma^{1/2}S^{-1}}\p{1-\frac{1}{2}\sigma^{1/2}\underbar{L}^{-1/2}\psi}}\\
\text{(\ensuremath{\sigma^{1/2}\underbar{L}^{-1/2}\leq0} and \ensuremath{\sigma^{1/2}S^{-1}\leq1})} & \leq\sigma^{-1}\p{1+\frac{14}{81}\psi-\p{1+\psi}\p{1-\frac{1}{2}\psi}}\\
 & =\sigma^{-1}\psi\p{\frac{14}{81}+\frac{1}{2}\psi-\frac{1}{2}}\\
\text{(\ensuremath{\psi\leq\frac{1}{2}})} & \leq0\qedhere
\end{align*}
\end{proof}
\begin{lem}
\textup{The coefficient $\beta\p{2\alpha^{2}\beta c_{1}+S^{-1}\beta L^{1/2}\kappa^{-1/2}\psi-\p{1-\beta}}$
of $\n{v_{k}-y_{k}}^{2}$ in \ref{lem:Master-inequality} is non-positive.\label{lem:vy-coeff-negative}}
\end{lem}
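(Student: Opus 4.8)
The plan is to exploit the positivity of $\beta$ and reduce the claim to a scalar inequality in $\psi$. Since $\beta=1-\p{1-\psi}\sigma^{1/2}S^{-1}>0$ (because $\sigma^{1/2}S^{-1}\leq1$ and $\psi<1$), it suffices to show $2\alpha^{2}\beta c_{1}+S^{-1}\beta L^{1/2}\kappa^{-1/2}\psi-\p{1-\beta}\leq0$. Using the definition \eqref{eq:def:beta} we have $1-\beta=\p{1-\psi}\sigma^{1/2}S^{-1}$, so after dividing through by the positive quantity $\sigma^{1/2}S^{-1}$, the claim becomes
\begin{align*}
\frac{2\alpha^{2}\beta c_{1}}{\sigma^{1/2}S^{-1}}+\beta\psi\,\frac{L^{1/2}\kappa^{-1/2}}{\sigma^{1/2}}\leq1-\psi.
\end{align*}
Since $\kappa=L/\sigma$ we have $L^{1/2}\kappa^{-1/2}=\sigma^{1/2}$, so the second term is exactly $\beta\psi$.

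For the first term I would bound $\alpha$ crudely: dropping the $1$ and the factor $1+\psi\geq1$ in the denominator of \eqref{eq:def:alpha} gives $\alpha\leq\sigma^{1/2}S^{-1}$, hence $\alpha^{2}\leq\sigma S^{-2}$. Next I substitute the bound $c_{1}\leq7S^{-1}L^{1/2}\kappa^{3/2}\psi^{-1}\tau^{2}$ from \eqref{eq:c1-leq}, together with $\tau^{2}=\tfrac{1}{81}\psi^{2}S\,\underbar{L}\,L^{-3/2}\kappa^{-1/2}$, which is just the square of the definition \eqref{eq:def:psi} of $\psi$. The essential point is that the $\psi^{-1}$ coming from $c_{1}$ is cancelled by the $\psi^{2}$ coming from $\tau^{2}$, and after collapsing the powers of $L,\kappa,\sigma,S$ one is left with $2\alpha^{2}\beta c_{1}\leq\tfrac{14}{81}\beta\psi\,S^{-2}\underbar{L}$; dividing by $\sigma^{1/2}S^{-1}$ yields $\tfrac{14}{81}\beta\psi\cdot S^{-1}\underbar{L}\,\sigma^{-1/2}$.

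It then remains to see that $S^{-1}\underbar{L}\,\sigma^{-1/2}$ is a small constant. Here I would reuse the estimate $S^{-1}\leq\tfrac{1}{18^{2}}\underbar{L}\,L^{-3/2}\kappa^{-1/2}$ already established in the course of the proof of Lemma \ref{lem:y-Diff-lemma} (it follows from \eqref{eq:def:psi} with $\tau\geq1$, $\psi\leq\tfrac12$); combined with $\kappa^{-1/2}\sigma^{-1/2}=L^{-1/2}$ this gives $S^{-1}\underbar{L}\,\sigma^{-1/2}\leq\tfrac{1}{324}\p{\underbar{L}/L}^{2}\leq\tfrac{1}{324}$. Hence the whole left-hand side is at most $\beta\psi\p{1+\tfrac{14}{81\cdot324}}\leq\psi\p{1+\tfrac{14}{81\cdot324}}$, using $\beta\leq1$, and since $\psi\leq\tfrac37$ this is strictly smaller than $1-\psi=\tfrac47$; this is precisely the slack that the hypothesis $\psi\leq\tfrac37$ buys over the looser $\psi\leq\tfrac12$ used in the other coefficient estimates. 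I expect the only real obstacle to be the bookkeeping of the fractional exponents through the $c_{1}$ and $\tau^{2}$ substitutions; conceptually the form of $\psi$ in \eqref{eq:def:psi} is engineered precisely so that these cancellations leave a term of size $O(\psi)\cdot(1-\beta)$ with a tiny constant, which is then dominated for $\psi$ bounded away from $\tfrac12$.
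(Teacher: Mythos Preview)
Your proof is correct and follows essentially the same approach as the paper: both factor out $\beta>0$, use $\alpha\leq\sigma^{1/2}S^{-1}$, $\beta\leq1$, and the bound $c_{1}\leq7S^{-1}L^{1/2}\kappa^{3/2}\psi^{-1}\tau^{2}$ from \eqref{eq:c1-leq}, and then collapse the exponents via the definition \eqref{eq:def:psi} of $\psi$ to reduce the claim to a scalar inequality in $\psi$ that holds under $\psi\leq\tfrac{3}{7}$. The only cosmetic difference is in how the $\psi$ identity is invoked: the paper substitutes the rearrangement $S^{-2}\kappa=\tfrac{1}{9^{4}}\underbar{L}^{2}L^{-3}\tau^{-4}\psi^{4}$ in one step, whereas you substitute $\tau^{2}=\tfrac{1}{81}\psi^{2}S\,\underbar{L}\,L^{-3/2}\kappa^{-1/2}$ and then separately bound $S^{-1}$ by reusing the estimate from the proof of Lemma~\ref{lem:y-Diff-lemma}; the resulting numerical checks are equivalent.
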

\begin{proof}
\begin{align*}
 & 2\alpha^{2}\beta c_{1}+\sigma^{1/2}S^{-1}\beta\psi-\p{1-\psi}\sigma^{1/2}S^{-1}\\
\text{(\ref{eq:c1-leq})} & \leq14\alpha^{2}\beta S^{-1}L^{1/2}\kappa^{3/2}\psi^{-1}\tau^{2}+\sigma^{1/2}S^{-1}\beta\psi-\p{1-\psi}\sigma^{1/2}S^{-1}\\
 & \leq14\sigma S^{-3}L^{1/2}\kappa^{3/2}\psi^{-1}\tau^{2}+\sigma^{1/2}S^{-1}\psi-\p{1-\psi}\sigma^{1/2}S^{-1}\\
 & =\sigma^{1/2}S^{-1}\p{14S^{-2}L\kappa\tau^{2}\psi^{-1}+2\psi-1}
\end{align*}
Here the last inequality follows since $\beta\leq1$ and $\alpha\leq\sigma^{1/2}S^{-1}$.
We now rearrange the definition of $\psi$ to yield the identity:
\begin{align*}
S^{-2}\kappa & =\frac{1}{9^{4}}\underbar{L}^{2}L^{-3}\tau^{-4}\psi^{4}
\end{align*}

Using this, we have:
\begin{align*}
 & 14S^{-2}L\kappa\tau^{2}\psi^{-1}+2\psi-1\\
 & =\frac{14}{9^{4}}\underbar{L}^{2}L^{-2}\psi^{3}\tau^{-2}+2\psi-1\\
 & \leq\frac{14}{9^{4}}\p{\frac{3}{7}}^{3}1^{-2}+\frac{6}{7}-1\leq0
\end{align*}
Here the last line followed since $\underbar{L}\leq L$, $\psi\leq\frac{3}{7}$,
and $\tau\geq1$. Hence the proof is complete.
\end{proof}
\begin{proof}[Proof of Theorem \ref{thm:Main-theorem}]
 Using the master inequality \ref{lem:Master-inequality} in combination
with the previous Lemmas \ref{lem:yk-fyk-diff-coeffs-negative}, \ref{lem:fxk-coeff-negative},
\ref{lem:grad-coeff-negative}, and \ref{lem:vy-coeff-negative},
we have:
\begin{align*}
\EE_{k}\sp{\rho_{k+1}} & \leq\beta\rho_{k}=\p{1-\p{1-\psi}\sigma^{1/2}S^{-1}}\rho_{k}
\end{align*}
When we have:
\begin{align*}
\p{1-\p{1-\psi}\sigma^{1/2}S^{-1}}^{k} & \leq\eps
\end{align*}
then the Lyapunov function $\rho_{k}$ has decreased below $\eps\rho_{0}$ in expectation.
Hence the complexity $K\p{\eps}$ satisfies:
\begin{align*}
K\p{\eps}\ln\p{1-\p{1-\psi}\sigma^{1/2}S^{-1}} & =\ln\p{\eps}\\
K\p{\eps} & =\frac{-1}{\ln\p{1-\p{1-\psi}\sigma^{1/2}S^{-1}}}\ln\p{1/\eps}
\end{align*}
Now it can be shown that for $0<x\leq\frac{1}{2}$, we have:
\begin{align*}
\frac{1}{x}-1 & \leq\frac{-1}{\ln\p{1-x}}\leq\frac{1}{x}-\frac{1}{2}\\
\frac{-1}{\ln\p{1-x}} & =\frac{1}{x}+\cO\p 1
\end{align*}
Since $n\geq2$, we have $\sigma^{1/2}S^{-1}\leq\frac{1}{2}$. Hence:
\begin{align*}
K\p{\eps} & =\frac{1}{1-\psi}\p{\sigma^{-1/2}S+\cO\p 1}\ln\p{1/\eps}
\end{align*}
An expression for $K_{\text{\text{{\tt NU\_ACDM}}}}\p{\eps}$, the complexity of $\text{{\tt NU\_ACDM}}$ follows by similar reasoning.
\begin{align}
K_{\text{{\tt NU\_ACDM}}}\p{\eps} & =\p{\sigma^{-1/2}S+\cO\p 1}\ln\p{1/\eps}
\end{align}
Finally we have:
\begin{align*}
K\p{\eps} & =\frac{1}{1-\psi}\p{\frac{\sigma^{-1/2}S+\cO\p 1}{\sigma^{-1/2}S+\cO\p 1}}K_{\text{\text{{\tt NU\_ACDM}}}}\p{\eps}\\
 & =\frac{1}{1-\psi}\p{1+o\p 1}K_{\text{\text{{\tt NU\_ACDM}}}}\p{\eps}
\end{align*}
which completes the proof.
\end{proof}

\section{Ordinary Differential Equation Analysis}

\subsection{Derivation of ODE for synchronous \texttt{A2BCD}\label{subsec:Derivation-of-ODE}}

If we take expectations with respect to $\EE_{k}$, then synchronous (no delay) $\algoNameS$
becomes:
\begin{align*}
y_{k} & =\alpha v_{k}+\p{1-\alpha}x_{k}\\
\EE_{k}x_{k+1} & =y_{k}-n^{-1}\kappa^{-1}\nabla f\p{y_{k}}\\
\EE_{k}v_{k+1} & =\beta v_{k}+\p{1-\beta}y_{k}-n^{-1}\kappa^{-1/2}\nabla f\p{y_{k}}
\end{align*}
We find it convenient to define $\eta=n\kappa^{1/2}$. Inspired by
this, we consider the following iteration:

\begin{align}
y_{k} & =\alpha v_{k}+\p{1-\alpha}x_{k}\label{eq:xk-def-ODE}\\
x_{k+1} & =y_{k}-s^{1/2}\kappa^{-1/2}\eta^{-1}\nabla f\p{y_{k}}\label{eq:yk-def-ODE}\\
v_{k+1} & =\beta v_{k}+\p{1-\beta}y_{k}-s^{1/2}\eta^{-1}\nabla f\p{y_{k}}\label{eq:vk-def-ODE}
\end{align}
for coefficients:
\begin{align*}
\alpha & =\p{1+s^{-1/2}\eta}^{-1}\\
\beta & =1-s^{1/2}\eta^{-1}
\end{align*}
$s$ is a discretization scale parameter that will be sent to $0$ to obtain an ODE
analogue of synchronous $\algoNameS$. We first use \eqref{eq:vk-in-terms-xk-yk}
to eliminate $v_{k}$ from from \eqref{eq:vk-def-ODE}.
\begin{align*}
0 & =-v_{k+1}+\beta v_{k}+\p{1-\beta}y_{k}-s^{1/2}\eta^{-1}\nabla f\p{y_{k}}\\
0 & =-\alpha^{-1}y_{k+1}+\alpha^{-1}\p{1-\alpha}x_{k+1}\\
 & +\beta\p{\alpha^{-1}y_{k}-\alpha^{-1}\p{1-\alpha}x_{k}}+\p{1-\beta}y_{k}-s^{1/2}\eta^{-1}\nabla f\p{y_{k}}\\
\text{(times by \ensuremath{\alpha}) }0 & =-y_{k+1}+\p{1-\alpha}x_{k+1}\\
 & +\beta\p{y_{k}-\p{1-\alpha}x_{k}}+\alpha\p{1-\beta}y_{k}-\alpha s^{1/2}\eta^{-1}\nabla f\p{y_{k}}\\
 & =-y_{k+1}+y_{k}\p{\beta+\alpha\p{1-\beta}}\\
 & +\p{1-\alpha}x_{k+1}-x_{k}\beta\p{1-\alpha}-\alpha s^{1/2}\eta^{-1}\nabla f\p{y_{k}}
\end{align*}
We now eliminate $x_{k}$ using \eqref{eq:xk-def-ODE}:
\begin{align*}
0 & =-y_{k+1}+y_{k}\p{\beta+\alpha\p{1-\beta}}\\
 & +\p{1-\alpha}\p{y_{k}-s^{1/2}\eta^{-1}\kappa^{-1/2}\nabla f\p{y_{k}}}-\p{y_{k-1}-s^{1/2}\eta^{-1}\kappa^{-1/2}\nabla f\p{y_{k-1}}}\beta\p{1-\alpha}\\
 & -\alpha s^{1/2}\eta^{-1}\nabla f\p{y_{k}}\\
 & =-y_{k+1}+y_{k}\p{\beta+\alpha\p{1-\beta}+\p{1-\alpha}}-\beta\p{1-\alpha}y_{k-1}\\
 & +s^{1/2}\eta^{-1}\nabla f\p{y_{k-1}}\p{\beta-1}\p{1-\alpha}\\
 & -\alpha s^{1/2}\eta^{-1}\nabla f\p{y_{k}}\\
 & =\p{y_{k}-y_{k+1}}+\beta\p{1-\alpha}\p{y_{k}-y_{k-1}}\\
 & +s^{1/2}\eta^{-1}\p{\nabla f\p{y_{k-1}}\p{\beta-1}\p{1-\alpha}-\alpha\nabla f\p{y_{k}}}
\end{align*}
Now to derive an ODE, we let $y_{k}=Y\p{ks^{1/2}}$. Then $\nabla f\p{y_{k-1}}=\nabla f\p{y_{k}}+\cO\p{s^{1/2}}$.
Hence the above becomes:
\begin{align}
0 & =\p{y_{k}-y_{k+1}}+\beta\p{1-\alpha}\p{y_{k}-y_{k-1}}\nonumber \\
 & +s^{1/2}\eta^{-1}\p{\p{\beta-1}\p{1-\alpha}-\alpha}\nabla f\p{y_{k}}+\cO\p{s^{3/2}}\nonumber \\
0 & =\p{-s^{1/2}\dot{Y}-\frac{1}{2}s\ddot{Y}}+\beta\p{1-\alpha}\p{s^{1/2}\dot{Y}-\frac{1}{2}s\ddot{Y}}\label{eq:Asymptotic-expansion-iteration-Y}\\
 & +s^{1/2}\eta^{-1}\p{\p{\beta-1}\p{1-\alpha}-\alpha}\nabla f\p{y_{k}}+\cO\p{s^{3/2}}\nonumber 
\end{align}
We now look at some of the terms in this equation to find the highest-order
dependence on $s$.
\begin{align*}
\beta\p{1-\alpha} & =\p{1-s^{1/2}\eta^{-1}}\p{1-\frac{1}{1+s^{-1/2}\eta}}\\
 & =\p{1-s^{1/2}\eta^{-1}}\frac{s^{-1/2}\eta}{1+s^{-1/2}\eta}\\
 & =\frac{s^{-1/2}\eta-1}{s^{-1/2}\eta+1}\\
 & =\frac{1-s^{1/2}\eta^{-1}}{1+s^{1/2}\eta^{-1}}\\
 & =1-2s^{1/2}\eta^{-1}+\cO\p s
\end{align*}
We also have:
\begin{align*}
\p{\beta-1}\p{1-\alpha}-\alpha & =\beta\p{1-\alpha}-1\\
 & =-2s^{1/2}\eta^{-1}+\cO\p s
\end{align*}
Hence using these facts on \eqref{eq:Asymptotic-expansion-iteration-Y},
we have:
\begin{align*}
0 & =\p{-s^{1/2}\dot{Y}-\frac{1}{2}s\ddot{Y}}+\p{1-2s^{1/2}\eta^{-1}+\cO\p s}\p{s^{1/2}\dot{Y}-\frac{1}{2}s\ddot{Y}}\\
 & +s^{1/2}\eta^{-1}\p{-2s^{1/2}\eta^{-1}+\cO\p s}\nabla f\p{y_{k}}+\cO\p{s^{3/2}}\\
0 & =-s^{1/2}\dot{Y}-\frac{1}{2}s\ddot{Y}+\p{s^{1/2}\dot{Y}-\frac{1}{2}s\ddot{Y}-2s^{1}\eta^{-1}\dot{Y}+\cO\p{s^{3/2}}}\\
 & \p{-2s^{1}\eta^{-2}+\cO\p{s^{3/2}}}\nabla f\p{y_{k}}+\cO\p{s^{3/2}}\\
0 & =-s\ddot{Y}-2s\eta^{-1}\dot{Y}-2s\eta^{-2}\nabla f\p{y_{k}}+\cO\p{s^{3/2}}\\
0 & =-\ddot{Y}-2\eta^{-1}\dot{Y}-2\eta^{-2}\nabla f\p{y_{k}}+\cO\p{s^{1/2}}
\end{align*}
Taking the limit as $s\to0$, we obtain the ODE:
\begin{align*}
\ddot{Y}\p t+2\eta^{-1}\dot{Y}+2\eta^{-2}\nabla f\p Y & =0
\end{align*}

\subsection{Convergence proof for synchronous ODE \label{subsec:Synchronous-ODE-proof}}

\begin{align*}
e^{-\eta^{-1}t}E'\p t & =\dotp{\nabla f\p{Y\p t},\dot{Y}\p t}+\eta^{-1}f\p{Y\p t}\\
 & +\frac{1}{2}\dotp{Y\p t+\eta \dot{Y}\p t,\dot{Y}\p t+\eta \ddot{Y}\p t}+\eta^{-1}\frac{1}{4}\n{Y\p t+\eta \dot{Y}\p t}^{2}\\
\text{(strong convexity \eqref{eq:Strongly-convex-bound})} & \leq\dotp{\nabla f\p Y,\dot{Y}}+\eta^{-1}\dotp{\nabla f\p Y,Y}-\frac{1}{2}\eta^{-1}\n Y^{2}\\
 & +\frac{1}{2}\dotp{Y+\eta \dot{Y},-\dot{Y}-2\eta^{-1}\nabla f\p Y}+\eta^{-1}\frac{1}{4}\n{Y\p t+\eta \dot{Y}\p t}^{2}\\
 & =-\frac{1}{4}\eta^{-1}\n Y^{2}-\frac{1}{4}\eta\n{\dot{Y}}^{2}\leq0
\end{align*}
Hence we have $E'(t)\leq 0$. Therefore $E(t)\leq E(0)$. That is:
\begin{align}
E(t) & =e^{n^{-1}\kappa^{-1/2}t}\p{f\p{Y}+\frac{1}{4}\n{Y+\eta \dot{Y}}^{2}}
 \leq E(0)
 = f\p{Y(0)}+\frac{1}{4}\n{Y(0)+\eta \dot{Y}(0)}^{2}
\end{align}
which implies:
\begin{align}
f\p{Y(t)}+\frac{1}{4}\n{Y(t)+\eta \dot{Y}(t)}^{2} &\leq e^{-n^{-1}\kappa^{-1/2}t} \p{ f\p{Y(0)}+\frac{1}{4}\n{Y(0)+\eta \dot{Y}(0)}^{2}}
\end{align}

%%%%%%%%%%%%%%%%%%%%%%%%%%%%%%%%%%%%%%%%
\subsection{Asynchronicity error lemma\label{subsec:Asynchronicity-error-lemma}}
%%%%%%%%%%%%%%%%%%%%%%%%%%%%%%%%%%%%%%%%
This result is the continuous-time analogue of Lemma \ref{lem:Asynchronicity-error-lemma}. First notice that $c\p 0=c_{0}$ and $c\p{\tau}=0$. We also have:
\begin{align*}
c'\p t/c_{0} & =-re^{-rt}-re^{-rt}\frac{e^{-r\tau}}{1-e^{-r\tau}}\\
 & =-r\p{e^{-rt}+e^{-rt}\frac{e^{-r\tau}}{1-e^{-r\tau}}}\\
 & =-r\p{e^{-rt}+\p{e^{-rt}-1}\frac{e^{-r\tau}}{1-e^{-r\tau}}+\frac{e^{-r\tau}}{1-e^{-r\tau}}}\\
c'\p t & =-rc\p t-rc_{0}\frac{e^{-r\tau}}{1-e^{-r\tau}}
\end{align*}
Hence using $c(\tau)=0$: 
\begin{align*}
A'\p t & =c_{0}\n{\dot{Y}\p t}^{2}+\int_{t-\tau}^{t}c'\p{t-s}\n{\dot{Y}\p s}^{2}ds\\
 & =c_{0}\n{\dot{Y}\p t}^{2}-rA\p t-rc_{0}\frac{e^{-r\tau}}{1-e^{-r\tau}}D\p t
\end{align*}
Now when $x\leq\frac{1}{2}$, we have $\frac{e^{-x}}{1-e^{-x}}\geq\frac{1}{2}x^{-1}$.
Hence when $r\tau\leq\frac{1}{2}$, we have:
\begin{align*}
A'\p t & \leq c_{0}\n{\dot{Y}\p t}^{2}-rA\p t-\frac{1}{2}\tau^{-1}c_{0}D\p t
\end{align*}
and the result easily follows.

%%%%%%%%%%%%%%%%%%%%%%%%%%%%%%%%%%%%%%%%
\subsection{Convergence analysis for the asynchronous ODE\label{subsec:Async-ODE-proof}}
%%%%%%%%%%%%%%%%%%%%%%%%%%%%%%%%%%%%%%%%
We consider the same energy as in the synchronous case (that is, the ODE in \eqref{eq:Acceleration-ODE}). Similar to
before, we have:
\begin{align*}
e^{-\eta^{-1}t}E'\p t & \leq\dotp{\nabla f\p Y,\dot{Y}}+\eta^{-1}\dotp{\nabla f\p Y,Y}-\frac{1}{2}\eta^{-1}\n Y^{2}\\
 & +\frac{1}{2}\dotp{Y+\eta \dot{Y},-\dot{Y}-2\eta^{-1}\nabla f\p{\hat{Y}}}+\eta^{-1}\frac{1}{4}\n{Y\p t+\eta \dot{Y}\p t}^{2}\\
 & =\dotp{\nabla f\p Y,\dot{Y}}+\eta^{-1}\dotp{\nabla f\p Y,Y}-\frac{1}{2}\eta^{-1}\n Y^{2}\\
 & +\frac{1}{2}\dotp{Y+\eta \dot{Y},-\dot{Y}-2\eta^{-1}\nabla f\p Y}+\eta^{-1}\frac{1}{4}\n{Y\p t+\eta \dot{Y}\p t}^{2}\\
 & -\eta^{-1}\dotp{Y+\eta \dot{Y},\nabla f\p{\hat{Y}}-\nabla f\p Y}\\
 & =-\frac{1}{4}\eta^{-1}\n Y^{2}-\frac{1}{4}\eta\n{\dot{Y}}^{2}-\eta^{-1}\dotp{Y+\eta \dot{Y},\nabla f\p{\hat{Y}}-\nabla f\p Y}
\end{align*}
where the final equality follows from the proof in Section \ref{subsec:Synchronous-ODE-proof}. Hence
\begin{align}
e^{-\eta^{-1}t}E'\p t & \leq-\frac{1}{4}\eta^{-1}\n Y^{2}-\frac{1}{4}\eta\n{\dot{Y}}^{2}+L\eta^{-1}\n Y\n{\hat{Y}-Y}+L\n{\dot{Y}}\n{\hat{Y}-Y}\label{eq:ODE-non-monotonic-energy}
\end{align}
Now we present an inequality that is similar to \eqref{lem:Async-difference-lemma}. 
\begin{lem}
Let $A,\chi>0$. Then:
\begin{align*}
\n{Y\p t-\hat{Y}\p t}A & \leq\frac{1}{2}\chi\tau D\p t+\frac{1}{2}\chi^{-1}A^{2}
\end{align*}
\end{lem}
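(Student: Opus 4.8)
The plan is to mirror, in continuous time, the argument behind the discrete Lemma \ref{lem:Async-difference-lemma}, with the finite difference $y_{k+1-j}-y_{k-j}$ replaced by the velocity $\dot{Y}$. First I would peel off the product with a single application of Young's inequality (AM--GM): for any $\chi>0$,
\begin{align*}
\n{Y\p t-\hat{Y}\p t}A & \leq\frac{1}{2}\chi^{-1}A^{2}+\frac{1}{2}\chi\n{Y\p t-\hat{Y}\p t}^{2}.
\end{align*}
So the entire lemma is reduced to the estimate $\n{Y\p t-\hat{Y}\p t}^{2}\leq\tau D\p t$.

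To get this, I would unpack the definition of the delayed iterate. Since $\hat{Y}\p t$ arises from an \emph{inconsistent read} patterned on \eqref{eq:def:Delayed-iterate}, its $i$-th coordinate block equals $Y_{\p i}\p{t-d_{i}\p t}$ for some delay $d_{i}\p t\in\sp{0,\tau}$, so block-by-block
\begin{align*}
Y_{\p i}\p t-\hat{Y}_{\p i}\p t & =\int_{t-d_{i}\p t}^{t}\dot{Y}_{\p i}\p s\,ds.
\end{align*}
Applying Cauchy--Schwarz (equivalently, Jensen's inequality for the convex map $\n{\cdot}^{2}$) to this integral gives $\n{Y_{\p i}\p t-\hat{Y}_{\p i}\p t}^{2}\leq d_{i}\p t\int_{t-d_{i}\p t}^{t}\n{\dot{Y}_{\p i}\p s}^{2}ds\leq\tau\int_{t-\tau}^{t}\n{\dot{Y}_{\p i}\p s}^{2}ds$, where we used $d_{i}\p t\leq\tau$ and nonnegativity of the integrand to enlarge the interval to $\sp{t-\tau,t}$. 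Summing over all blocks $i$ and using $\sum_{i}\n{\dot{Y}_{\p i}\p s}^{2}=\n{\dot{Y}\p s}^{2}$ yields $\n{Y\p t-\hat{Y}\p t}^{2}\leq\tau\int_{t-\tau}^{t}\n{\dot{Y}\p s}^{2}ds=\tau D\p t$. Substituting into the Young step above gives exactly the claimed bound.

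There is essentially no obstacle here; the one point requiring a bit of care is the bookkeeping for the inconsistent read, namely that different coordinate blocks may be out of date by different amounts $d_{i}\p t$, so the elementary integral estimate must be carried out block-by-block and only then summed, rather than in a single stroke on the full vector. (Alternatively, one could obtain the result by a Riemann-sum limit of the already-proved discrete Lemma \ref{lem:Async-difference-lemma}, but the direct integral argument is cleaner and self-contained.)
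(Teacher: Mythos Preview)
Your proof is correct and follows essentially the same approach as the paper: apply Young's inequality to split the product, then use the integral representation $Y-\hat Y=\int \dot Y\,ds$ together with Cauchy--Schwarz/H\"older to get $\n{Y-\hat Y}^{2}\leq\tau D(t)$. The only minor difference is that the paper writes the argument for a consistent delay $\hat Y(t)=Y(t-j(t))$ and merely remarks that it extends to inconsistent reads, whereas you carry out the block-by-block version explicitly.
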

\begin{proof}
Since $\hat{Y}(t)$ is a delayed version of $Y(t)$, we have: $\hat{Y}(t)=Y(t-j(t))$ for some function $j(t)\geq 0$ (though this can be easily generalized to an inconsistent read). Recall that for $\chi>0$, we have $ab\leq \frac{1}{2}\p{\chi a^2 +\chi^{-1} b^2}$. Hence
\begin{align*}
X\p t-\hat{X}\p t & =\int_{s=t-j\p t}^{t}X'\p sds\\
\n{X\p t-\hat{X}\p t}A & =\n{\int_{s=t-j\p t}^{t}X'\p sds}A\\
 & \leq\frac{1}{2}\chi\n{\int_{s=t-j\p t}^{t}X'\p sds}^{2}+\frac{1}{2}\chi^{-1}A^{2}\\
 \text{(Holder's inequality)}& \leq\frac{1}{2}\chi\p{\int_{s=t-j\p t}^{t}\n{X'\p s}^{2}ds}\p{\int_{s=t-j\p t}^{t}1ds}+\frac{1}{2}\chi^{-1}A^{2}\\
 & \leq\frac{1}{2}\chi\tau\p{\int_{s=t-j\p t}^{t}\n{X'\p s}^{2}ds}+\frac{1}{2}\chi^{-1}A^{2}
\end{align*}
\end{proof}
We use this lemma twice on $\n Y\n{\hat{Y}-Y}$ and $\n{\dot{Y}}\n{\hat{Y}-Y}$ in \eqref{eq:ODE-non-monotonic-energy}
with $\chi=2L,A=\n{Y}$ and $\chi=4L\eta^{-1},A=\n{\dot{Y}}$ respectively, to yield:
\begin{align*}
e^{-\eta^{-1}t}E'\p t & \leq-\frac{1}{4}\eta^{-1}\n Y^{2}-\frac{1}{4}\eta\n{\dot{Y}}^{2}\\
 & +L\eta^{-1}\p{L\tau D\p t+\frac{1}{4}L^{-1}\n Y^{2}}+L\p{2L\eta^{-1}\tau D\p t+\frac{1}{8}L^{-1}\eta\n{\dot{Y}}^{2}}\\
 & =-\frac{1}{8}\eta\n{\dot{Y}}^{2}+3L^{2}\eta^{-1}\tau D\p t
\end{align*}

The proof of convergence is completed in Section \ref{subsec:ODE-Analysis}.

%%%%%%%%%%%%%%%%%%%%%%%%%%%%%%%%%%%%%%%%
\section{Optimality proof}\label{sec:Optimality-proof}
%%%%%%%%%%%%%%%%%%%%%%%%%%%%%%%%%%%%%%%%
For parameter set $\sigma,L_{1},\ldots,L_{n},n$, we construct a block-separable function $f$ on the space $\RR^{bn}$ (separated into $n$ blocks of size $b$), which will imply this lower bound. Define $\kappa_{i}=L_i/\sigma$. We define the matrix $A_i\in\RR^{b\times b}$ via:
\begin{align*}
A_{i} & \triangleq\p{\begin{array}{ccccc}
2 & -1 & 0\\
-1 & 2 & \ddots & \ddots\\
0 & \ddots & \ddots & -1 & 0\\
 & \ddots & -1 & 2 & -1\\
 &  & 0 & -1 & \theta_{i}
\end{array}},\text{ for }\theta_{i}=\frac{\kappa_{i}^{1/2}+3}{\kappa_{i}^{1/2}+1}.
\end{align*}
Hence we define $f_{i}$ on $\RR^{b}$ via:
\begin{align*}
f_{i} & =\frac{L_{i}-\sigma}{4}\p{\frac{1}{2}\dotp{x,A_ix}-\dotp{e_{1},x}}+\frac{\sigma}{2}\n x^{2}
\end{align*}
which is clearly $\sigma$-strongly convex and $L_{i}$-Lipschitz on $\RR^b$.
From Lemma 8 of \parencite{LanZhou2017_optimal}, we know that this function
has unique minimizer 
\begin{align*}
x_{*,\p i} & \triangleq\p{q_{i},q_{i}^{2},\ldots,q_{i}^{b}},\text{ for }q=\frac{\kappa_{i}^{1/2}-1}{\kappa_{i}^{1/2}+1}.
\end{align*}
Finally, we define $f$ via:
\begin{align*}
f\p x & \triangleq\sum_{i=1}^{n}f_{i}\p{x_{\p i}}.
\end{align*}

Now let $e\p{i,j}$ be the $j$th unit vector of the $i$th block of size $b$ in $\RR^{bn}$. For $I_1,\ldots,I_n\in\NN$, we define the subspaces 
\begin{align*}
V_{i}\p I & =\text{span}\cp{e\p{i,1},\ldots,e\p{i,I}},\\
V\p{I_{1},\ldots,I_{n}} & =V_{1}\p{I_{1}}\oplus\ldots\oplus V_{n}\p{I_{n}}.
\end{align*}
$V\p{I_{1},\ldots,I_{n}}$ is the subspace with the first $I_{1}$ components of block $1$ nonzero, the first $I_{2}$ components of block $2$ nonzero, etc. First notice that $\text{IC}\p{V\p{I_{1},\ldots,I_{n}}}=V\p{I_{1},\ldots,I_{n}}$. Also, clearly, we have:
\begin{align}
\nabla_{i}f\p{V\p{I_{1},\ldots,I_{n}}} & \subset V\p{0,\ldots,0,\min\cp{I_{i}+1,b},0,\ldots,0}\label{eq:Lower-bound-subspace-transition}.
\end{align}
$\nabla_{i}f$ is supported on the $i$th block, hence why all the other indices are $0$. The patten of nonzeros in $A$ means that the gradient will have at most $1$ more nonzero on the $i$th block (see \parencite{Nesterov2013_introductory}). 

Let the initial point $x_{0}$ belong to $V\p{\bar{I_{1}},\ldots,\bar{I_{n}}}$. Let $I_{K,i}$ be the number of times we have had $i_{k}=i$ for $k=0,\ldots,K-1$. By induction on \eqref{eq:xkp1-in-IC-span} using \eqref{eq:Lower-bound-subspace-transition}, we have:
\begin{align*}
x_{k} & \in V\p{\min\cp{\bar{I_{1}}+I_{k,1},b},\ldots,\min\cp{\bar{I_{n}}+I_{k,m},b}}
\end{align*}
Hence if $x_{0,\p i}\in V_{i}\p 0$ and $k\leq b$, then
\begin{align*}
\n{x_{k,\p i}-x_{*,\p i}}^{2} & \geq\min_{x\in V_{i}\p{I_{k,i}}}\n{x-x_{*,\p i}}^{2} =\sum_{j=I_{k,i}+1}^{b}q_{i}^{2j} =\p{q_{i}^{2I_{k,i}+2}-q_{i}^{2b+2}}/\p{1-q_{i}^{2}}
\end{align*}

Therefore for all $i$, we have:
\begin{align*}
\EE\n{x_{k}-x_{*}}^{2} & \geq\EE\n{x_{k,\p i}-x_{*,\p i}}^{2} \geq\EE\sp{\p{q_{i}^{2I_{k,i}+2}-q_{i}^{2b+2}}/\p{1-q_{i}^{2}}}
\end{align*}
To evaluate this expectation, we note:
\begin{align*}
\EE q_{i}^{2I_{k,i}} & =\sum_{j=0}^{k}\p{\begin{array}{c}
k\\
j
\end{array}}p_{i}^{j}\p{1-p_{i}}^{k-j}q_{i}^{2j}\\
 & =\p{1-p_{i}}^{k}\sum_{j=0}^{k}\p{\begin{array}{c}
k\\
j
\end{array}}\p{q_{i}^{2}p_{i}\p{1-p_{i}}^{-1}}^{j}\\
 & =\p{1-p_{i}}^{k}\p{1+q_{i}^{2}p_{i}\p{1-p_{i}}^{-1}}^{k}\\
 & =\p{1-\p{1-q_{i}^{2}}p_{i}}^{k}
\end{align*}
Hence
\begin{align*}
\EE\n{x_{k}-x_{*}}^{2} & \geq\p{\p{1-\p{1-q_{i}^{2}}p_{i}}^{k}-q_{i}^{2b}}q_{i}^{2}/\p{1-q_{i}^{2}}.
\end{align*}
For any $i$, we may select the starting iterate $x_{0}$ by defining its block $j=1,\ldots,n$ via:
\begin{align*}
x_{0,\p j} & =\p{1-\del_{ij}}x_{*,\p j}
\end{align*}
For such a choice of $x_{0}$, we have
\begin{align*}
\n{x_{0}-x_{*}}^{2} & =\n{x_{*,\p i}}^{2} =q_{i}^{2}+\ldots+q_{i}^{2b}=q_{i}^{2}\frac{1-q_{i}^{2b}}{1-q_{i}^{2}}
\end{align*}
Hence for this choice of $x_0$:
\begin{align*}
\EE\n{x_{k}-x_{*}}^{2}/\n{x_{0}-x_{*}}^{2} & \geq\p{\p{1-\p{1-q_{i}^{2}}p_{i}}^{k}-q_{i}^{2b}}/\p{1-q_{i}^{2b}}
\end{align*}

Now notice:
\begin{align*}
\p{1-\p{1-q_{i}^{2}}p_{i}}^{k} & =\p{q_{i}^{-2}-\p{q_{i}^{-2}-1}p_{i}}^{k}q_{i}^{2k} \geq q_{i}^{2k}
\end{align*}
Hence
\begin{align*}
\EE\n{x_{k}-x_{*}}^{2}/\n{x_{0}-x_{*}}^{2} & \geq\p{1-\p{1-q_{i}^{2}}p_{i}}^{k}\p{1-q_{i}^{2b-2k}}/\p{1-q_{i}^{2b}}
\end{align*}
Now if we let $b=2k$, then we have:
\begin{align*}
\EE\n{x_{k}-x_{*}}^{2}/\n{x_{0}-x_{*}}^{2} & \geq\p{1-\p{1-q_{i}^{2}}p_{i}}^{k}\p{1-q_{i}^{2k}}/\p{1-q_{i}^{4k}}\\
 & =\p{1-\p{1-q_{i}^{2}}p_{i}}^{k}/\p{1+q_{i}^{2k}}\\
\EE\n{x_{k}-x_{*}}^{2}/\n{x_{0}-x_{*}}^{2} & \geq\frac{1}{2}\max_{i}\p{1-\p{1-q_{i}^{2}}p_{i}}^{k}
\end{align*}
Now let us take the minimum of the right-hand side over the parameters
$p_{i}$, subject to $\sum_{i=1}^{n}p_{i}=1$. The solution to this
minimization is clearly:
\begin{align*}
p_{i} & =\p{1-q_{i}^{2}}^{-1}/\p{\sum_{j=1}^{n}\p{1-q_{j}^{2}}^{-1}}
\end{align*}
Hence 
\begin{align*}
\EE\n{x_{k}-x_{*}}^{2}/\n{x_{0}-x_{*}}^{2} & \geq\frac{1}{2}\p{1-\p{\sum_{j=1}^{n}\p{1-q_{j}^{2}}^{-1}}^{-1}}^{k}\\
\sum_{j=1}^{n}\p{1-q_{j}^{2}}^{-1} & =\frac{1}{4}\sum_{j=1}^{n}\p{\kappa_{i}^{1/2}+2+\kappa_{i}^{-1/2}}\\
 & \geq\frac{1}{4}\p{\sum_{j=1}^{n}\kappa_{i}^{1/2}+2n}\\
\EE\n{x_{k}-x_{*}}^{2}/\n{x_{0}-x_{*}}^{2} & \geq\frac{1}{2}\p{1-\frac{4}{\sum_{j=1}^{n}\kappa_{i}^{1/2}+2n}}^{k}
\end{align*}
Hence the complexity $I\p{\eps}$ satisfies:
\begin{align*}
\eps & \geq\frac{1}{2}\p{1-\frac{4}{\sum_{j=1}^{n}\kappa_{i}^{1/2}+2n}}^{I\p{\eps}}\\
I\p{\eps} & \geq-\p{\ln\p{1-\frac{4}{\sum_{j=1}^{n}\kappa_{i}^{1/2}+2n}}}^{-1}\ln\p{1/2\eps}\\
 & =\frac{1}{4}\p{1+o\p 1}\p{n+\sum_{j=1}^{n}\kappa_{i}^{1/2}}\ln\p{1/2\eps}
\end{align*}

%%%%%%%%%%%%%%%%%%%%%%%%%%%%%%%%%%%%%%%%
\section{Extensions}\label{sec:Extensions}
%%%%%%%%%%%%%%%%%%%%%%%%%%%%%%%%%%%%%%%%
As mentioned, a stronger result than Theorem \ref{thm:Main-theorem} is possible. In the case when $L_i=L$ for all $i$, we can consider a slight modification of the coefficients:
\begin{align}
\alpha & \triangleq \p{1+\p{1+\psi}\sigma^{-1/2}S}^{-1}\\
\beta & \triangleq 1-\p{1+\psi}^{-1}\sigma^{1/2}S^{-1}\\
h & \triangleq \p{1+\frac{1}{2}\sigma^{1/2}L^{-1/2}\psi}^{-1}.
\end{align}
for the asynchronicity parameter:
\begin{align}
\psi &= 6\kappa^{1/2}n^{-1}\times \tau
\end{align}

This leads to complexity:
\begin{align}
K\p{\epsilon} &= \p{1+\psi}n\kappa^{1/2}\ln\p{1/\eps}
\end{align}
Here there is no restriction on $\psi$ as in Theorem \ref{thm:Main-theorem}, and hence there is no restriction on $\tau$. Assuming $\psi\leq 1$ gives optimal complexity to within a constant factor. Notice then that the resulting condition of $\tau$
\begin{align}
\tau &\leq \frac{1}{6}n\kappa^{-1/2}
\end{align} 
now essentially matches the one in Theorem \ref{thm:Async-ODE-Convergence} in Section \ref{subsec:ODE-Analysis}. While this result is stronger, it increases the complexity of the proof substantially. So in the interests of space and simplicity, we do not prove this stronger result.

%%%%%%%%%%%%%%%%%%%%%%%%%%%%%%%%%%%%%%%%
\section{Efficient Implementation} \label{sec:Efficient implementation}
%%%%%%%%%%%%%%%%%%%%%%%%%%%%%%%%%%%%%%%%
As mentioned in Section \ref{subsec:Numerical-experiments}, authors in \parencite{LeeSidford2013_efficienta} proposed a linear transformation of an accelerated RBCD scheme that results in sparse coordinate updates. Our proposed algorithm can be given a similar efficient implementation. We may eliminate $x^{k}$ from $\texttt{A2BCD}$, and derive the equivalent iteration below:
\begin{align*}
\p{\begin{array}{c}
y_{k+1}\\
v_{k+1}
\end{array}} & =\p{\begin{array}{cc}
1-\alpha\beta, & \alpha\beta\\
1-\beta, & \beta
\end{array}}\p{\begin{array}{c}
y_{k}\\
v_{k}
\end{array}}-\p{\begin{array}{c}
\p{\alpha\sigma^{-1/2}L_{i_{k}}^{-1/2}+h\p{1-\alpha}L_{i_{k}}^{-1}}\nabla_{i^{k}}f\p{\hat{y}^{k}}\\
\p{\sigma^{-1/2}L_{i_{k}}^{-1/2}}\nabla_{i^{k}}f\p{\hat{y}^{k}}
\end{array}}\\
 & \triangleq C\p{\begin{array}{c}
y_{k}\\
v_{k}
\end{array}}-Q_{k}
\end{align*}
where $C$ and $Q_{k}$ are defined in the obvious way. Hence we define auxiliary variables $p_{k},q_{k}$ defined via:
\begin{align}
\p{\begin{array}{c}
y_{k}\\
v_{k}
\end{array}} & =C^{k}\p{\begin{array}{c}
p_{k}\\
q_{k}
\end{array}}\label{eq:Relation-yv-pq}
\end{align}
These clearly follow the iteration:
\begin{align}
\p{\begin{array}{c}
p_{k+1}\\
q_{k+1}
\end{array}} & =\p{\begin{array}{c}
p_{k}\\
q_{k}
\end{array}}-C^{-\p{k+1}}Q_{k}\label{eq:Efficient-AARBCD-1-2}
\end{align}
Since the vector $Q_{k}$ is sparse, we can evolve variables $p_{k}$, and $q_{k}$ in a sparse manner, and recover the original iteration variables at the end of the algorithm via \ref{eq:Relation-yv-pq}. 

The gradient of the dual function is given by:
\begin{align*}
\nabla D\p y & =\frac{1}{\lambda d}\p{\frac{1}{d}A^{T}Ay+\lambda\p{y+l}}
\end{align*}
As mentioned before, it is necessary to maintain or recover $Ay^{k}$
to calculate block gradients. Since $Ay^{k}$ can be recovered via the linear relation in \eqref{eq:Relation-yv-pq}, and the gradient is an affine function, we maintain the auxiliary vectors $Ap^{k}$ and $Aq^{k}$ instead. 

Hence we propose the following efficient implementation in Algorithm \ref{alg:A2BCD}. We used this to generate the results in Table \ref{tab:Error-vs-time}. We also note also that it
can improve performance to periodically recover $v^{k}$ and $y^{k}$, reset the values of $p^{k}$, $q^{k}$, and $C$ to $v^{k}$, $y^{k}$, and $I$ respectively, and restarting the scheme (which can be done cheaply in time $\cO\p d$).

We let $B\in\RR^{2\times2}$ represent $C^{k}$, and $b$ represent $B^{-1}$. $\otimes$ is the Kronecker product. Each computing node has local outdated versions of $p, q, Ap, Aq$ which we denote $\hat p, \hat q, \hat{Ap}, \hat{Aq}$ respectively. We also find it convenient to define:
\begin{align}
\begin{bmatrix}
D_1^k \\ D_2^k
\end{bmatrix}
=
\begin{bmatrix}
\alpha\sigma^{-1/2}L_{i_{k}}^{-1/2}+h\p{1-\alpha}L_{i_{k}}^{-1} \\ \sigma^{-1/2}L_{i_{k}}^{-1/2}
\end{bmatrix}
\end{align}

%%%%%%%%%%%%%%%%%%%%%%%%%%%%%%%%%%%%%%%% 
\begin{algorithm}[H]
%%%%%%%%%%%%%%%%%%%%%%%%%%%%%%%%%%%%%%%% 
\caption{Shared-memory implementation of A2BCD} \label{alg:A2BCD}
\begin{algorithmic}[1] 
\State \textbf{Inputs:} Function parameters $A$, $\lambda$, $L$, $\cp{L_i}_{i=1}^{n}$, $n$, $d$. Delay $\tau$ (obtained in dry run). Starting vectors $y$, $v$. 
\State \textbf{Shared data:} Solution vectors $p$, $q$; auxiliary vectors $Ap$, $Aq$; sparsifying matrix $B$
\State \textbf{Node local data:} Solution vectors $\hat{p}$, $\hat{q}$, auxiliary vectors $\hat{Ap}$, $\hat{Aq}$, sparsifying matrix $\hat{B}$.
\State Calculate parameters $\psi, \alpha, \beta, h$ via \ref{def:AANRBCD}. Set $k=0$. 
\State \textbf{Initializations:} $p\gets y$, $q\gets v$, $Ap\gets Ay$, $Aq\gets Av$, $B\gets I$.
\While{not converged, each computing node asynchronous} 
	\State Randomly select block $i$ via \eqref{eq:Block-probability-distribution}.
	\State Read shared data into local memory: $\hat{p}\gets p$, $\hat{q}\gets q$, $\hat{Ap}\gets Ap$, $\hat{Aq}\gets Aq$, $\hat{B}\gets B$.
	\State Compute block gradient: $\nabla_i f(\hat{y})=\frac{1}{n\lambda}\p{\frac{1}{n}A_{i}^T\p{\hat{B}_{1,1}\hat{Ap}+\hat{B}_{1,2}\hat{Aq}}+\lambda\p{\hat{B}_{1,1}\hat{p}+\hat{B}_{1,2}\hat{q}}}$ 
	\State Compute quantity $g_i=A_i^T\nabla_i f(\hat{y})$
	\Statex \textbf{Shared memory updates:}
	\State Update $B\gets \begin{bmatrix}
1-\alpha\beta& \alpha\beta\\
1-\beta& \beta
\end{bmatrix}\times B$, calculate inverse $b\gets B^{-1}$.
	\State $\begin{array}{cl}
\begin{bmatrix}
p \\ q
\end{bmatrix} & \mathrel{-}= b\begin{bmatrix}
D_1^k \\ D_2^k
\end{bmatrix}\otimes \nabla_i f(\hat{y})
\end{array}$, 
$\quad\begin{array}{cl}
\begin{bmatrix}
Ap \\ Aq
\end{bmatrix} & \mathrel{-}= b\begin{bmatrix}
D_1^k \\ D_2^k
\end{bmatrix}\otimes g_i
\end{array}$
\State Increase iteration count: $k\gets k+1$
\EndWhile
\State Recover original iteration variables: $\begin{bmatrix} y \\ v \end{bmatrix} \gets B\begin{bmatrix} p \\ q \end{bmatrix}$. Output $y$. 
\end{algorithmic} 
\end{algorithm} 
%%%%%%%%%%%%%%%%%%%%%%%%%%%%%%%%%%%%%%%%

\end{document}
%%%%%%%%%%%%%%%%%%%%%%%%%%%%%%%%%%%%%%%%

% This document was modified from the file originally made available by
% Pat Langley and Andrea Danyluk for ICML-2K. This version was created
% by Iain Murray in 2018. It was modified from a version from Dan Roy in
% 2017, which was based on a version from Lise Getoor and Tobias
% Scheffer, which was slightly modified from the 2010 version by
% Thorsten Joachims & Johannes Fuernkranz, slightly modified from the
% 2009 version by Kiri Wagstaff and Sam Roweis's 2008 version, which is
% slightly modified from Prasad Tadepalli's 2007 version which is a
% lightly changed version of the previous year's version by Andrew
% Moore, which was in turn edited from those of Kristian Kersting and
% Codrina Lauth. Alex Smola contributed to the algorithmic style files.